\pdfoutput=1
\RequirePackage{ifpdf}
\ifpdf % We are running pdfTeX in pdf mode
\documentclass[pdftex]{sigma}
\else
\documentclass{sigma}
\fi

\newcommand\C{\mathbb{C}}
\newcommand\R{\mathbb{R}}
\newcommand\g{\mathfrak{g}}
\newcommand\m{\mathfrak{t}}
\newcommand\mm{\mathfrak{m}}
\newcommand\h{\mathfrak{h}}
\newcommand\K{\mathbb{K}}
\newcommand\D{\delta_{\mu_0}}

\numberwithin{equation}{section}

\newtheorem{Theorem}{Theorem}[section]
\newtheorem{Corollary}[Theorem]{Corollary}
\newtheorem{Lemma}[Theorem]{Lemma}
\newtheorem{Proposition}[Theorem]{Proposition}
 { \theoremstyle{definition}
\newtheorem{Definition}[Theorem]{Definition}
\newtheorem{Example}[Theorem]{Example}
\newtheorem{Remark}[Theorem]{Remark} }

\begin{document}
\allowdisplaybreaks

\newcommand{\arXivNumber}{1806.07553}

\renewcommand{\PaperNumber}{002}

\FirstPageHeading

\ShortArticleName{Coadjoint Orbits of Lie Algebras and Cartan Class}

\ArticleName{Coadjoint Orbits of Lie Algebras and Cartan Class}

\Author{Michel GOZE~$^\dag$ and Elisabeth REMM~$^\ddag$}

\AuthorNameForHeading{M.~Goze and E.~Remm}

\Address{$^\dag$~Ramm Algebra Center, 4 rue de Cluny, F-68800 Rammersmatt, France}
\EmailD{\href{mailto:goze.rac@gmail.com}{goze.rac@gmail.com}}

\Address{$^\ddag$~Universit\'e de Haute-Alsace, IRIMAS EA 7499, D\'epartement de Math\'ematiques,\\
\hphantom{$^\ddag$}~F-68100 Mulhouse, France}
\EmailD{\href{mailto:elisabeth.remm@uha.fr}{elisabeth.remm@uha.fr}}

\ArticleDates{Received September 13, 2018, in final form December 31, 2018; Published online January 09, 2019}

\Abstract{We study the coadjoint orbits of a Lie algebra in terms of Cartan class. In fact, the tangent space to a coadjoint orbit $\mathcal{O}(\alpha)$ at the point $\alpha$ corresponds to the characteristic space associated to the left invariant form~$\alpha$ and its dimension is the even part of the Cartan class of $\alpha$. We apply this remark to determine Lie algebras such that all the nontrivial orbits (nonreduced to a point) have the same dimension, in particular when this dimension is~$2$ or~$4$. We determine also the Lie algebras of dimension $2n$ or $2n+1$ having an orbit of dimension~$2n$.}

\Keywords{Lie algebras; coadjoint representation; contact forms; Frobenius Lie algebras; Cartan class}

\Classification{17B20; 17B30; 53D10; 53D05}

\section{Introduction}

Let $G$ be a connected Lie group, $\g$ its Lie algebra and~$\g^*$ the dual vector space of~$\g$. We identify~$\g$ with the Lie algebra of left invariant vector fields on $G$ and $\g^*$ with the vector space of left invariant Pfaffian forms on $G$. The Lie group $G$ acts on $\g^*$ by the coadjoint action. If~$\alpha$ belongs to~$\g^*$, its coadjoint orbit $\mathcal{O}(\alpha)$ associated with this action is reduced to a point if~$\alpha$ is closed for the adjoint cohomology of~$\g$. If not, the coadjoint orbit is an even-dimensional manifold provided with a symplectic structure. From the Kirillov theory, if $G$ is a connected and simply connected nilpotent Lie group, there exists a canonical bijection from the set of coadjoint orbits onto the set of equivalence classes of irreducible unitary representations of this Lie group. In this work, we establish a link between the dimension of the coadjoint orbit of the form $ \alpha$ and $\operatorname{cl}(\alpha)$ its class in Elie Cartan's sense. More precisely $\dim \mathcal{O}(\alpha)= 2 \big[\frac{\operatorname{cl}(\alpha)}{2}\big]$. Recall that the Cartan class of $\alpha$ corresponds to the number of independent Pfaffian forms needed to define $\alpha$ and its differential ${\rm d}\alpha$ and it is equal to the codimension of the characteristic space \cite{AW,godbillon,GR-DGA}. The dimension of $\mathcal{O}(\alpha)$ results in a natural relation between this characteristic space and the tangent space at the point $\alpha$ to the orbit $\mathcal{O}(\alpha)$.

As applications, we describe classes of Lie algebras with additional properties related to its coadjoint orbits. For example, we determine all Lie algebras whose nontrivial orbits are all of dimension~$2$ or~$4$ and also the Lie algebras of dimension $2p$ or $2p+1$ admitting a maximal orbit of dimension $2p$ that is admitting $\alpha \in \g^*$ such that $\operatorname{cl}(\alpha) \geq 2p$.
Notice that the nilpotent Lie algebras classified in Sections~\ref{section3} and~\ref{section4} play a role in connection with a so-called inverse problem in representation theory of nilpotent Lie groups (see \cite{Beltita2} and \cite[Section~5.1]{Be2}).

\section{Dimension of coadjoint orbits and Cartan class}
\subsection{Cartan class of a Pfaffian form}
Let $M$ be a $n$-dimensional differentiable manifold and $\alpha$ a Pfaffian form on $M$, that is a differential form of degree $1$. The
{\it characteristic space of $\alpha$ at a point $x \in M$} is the linear subspace~$\mathcal{C}_x(\alpha)$ of the tangent space $T_x(M)$ of $M$ at the point $x$ defined by
\begin{gather*} \mathcal{C}_x(\alpha)=A(\alpha(x)) \cap A({\rm d}\alpha(x)),\end{gather*}
where \begin{gather*} A(\alpha(x))=\{X_x \in T_x(M), \, \alpha(x)(X_x)=0\}\end{gather*} is the {\it associated subspace of $\alpha (x)$},
\begin{gather*} A({\rm d}\alpha(x))= \{X_x \in T_x(M), \, {\rm i} (X_x){\rm d} \alpha(x) =0 \}\end{gather*} is the {\it associated subspace of ${\rm d}\alpha (x)$} and ${\rm i}(X_x){\rm d}\alpha(x)(Y_x)={\rm d}\alpha(x)(X_x,Y_x)$.

\begin{Definition}Let $\alpha$ be a Pfaffian form on the differential manifold $M$. The {\it Cartan class of~$\alpha$ at the point $x \in M$}~\cite{Cartan} is the codimension of the characteristic space~$\mathcal{C}_x(\alpha)$ in the tangent space~$T_x(M)$ to~$M$ at the point~$x$. We denote it by $\operatorname{cl}(\alpha)(x)$.
\end{Definition}

 The function $x \rightarrow \operatorname{cl}(\alpha)(x)$ is with positive integer values and is lower semi-continuous, that is, for every $x \in M$ there exists a suitable neighborhood $V$ such that for every $x_1 \in V$ one has $\operatorname{cl}(\alpha)(x_1) \geq \operatorname{cl}(\alpha)(x)$.

The {\it characteristic system of $\alpha$ at the point $x$ of $ M$} is the subspace $ \mathcal{C}_x^*(\alpha)$ of the dual $T_x^*(M)$ of $T_x(M)$ orthogonal to $\mathcal{C}_x(\alpha)$:
\begin{gather*} \mathcal{C}_x^*(\alpha)=\{\omega(x) \in T_x^*(M), \, \omega(x)(X_x)=0, \, \forall\, X_x \in \mathcal{C}_x(\alpha)\}.\end{gather*}
Then
\begin{gather*}\operatorname{cl}(\alpha)(x)= \dim \mathcal{C}_x^*(\alpha).\end{gather*}

\begin{Proposition}
If $\alpha$ is a Pfaffian form on $M$, then
\begin{itemize}\itemsep=0pt
\item $\operatorname{cl}(\alpha)(x)= 2p+1$ if $(\alpha \wedge ({\rm d}\alpha)^p)(x) \neq 0$ and $ ({\rm d}\alpha)^{p+1}(x)=0$,
\item $\operatorname{cl}(\alpha)(x)=2p$ if $({\rm d}\alpha )^p(x) \neq 0 $ and $ (\alpha \wedge ({\rm d}\alpha )^{p})(x)=0$.
\end{itemize}
\end{Proposition}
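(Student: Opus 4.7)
The plan is to reduce the statement to pointwise linear algebra on $T_x(M)$. Fix $x\in M$ and set $\alpha_0=\alpha(x)\in T_x^*(M)$ and $\omega=\mathrm{d}\alpha(x)\in \Lambda^2 T_x^*(M)$. The Cartan class at $x$ is by definition the codimension of $\mathcal{C}_x(\alpha)=A(\alpha_0)\cap A(\omega)$, so everything reduces to understanding these two subspaces in terms of the exterior powers appearing in the statement.

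Next I invoke the standard skew-symmetric normal form: if the rank of $\omega$ is $2q$, there is a basis $e_1,\dots,e_n$ of $T_x(M)$ with dual basis $e_1^*,\dots,e_n^*$ such that $\omega=\sum_{i=1}^{q}e_{2i-1}^*\wedge e_{2i}^*$, and $A(\omega)=\mathrm{span}(e_{2q+1},\dots,e_n)$. From this normal form one computes directly
\begin{gather*}
\omega^{q}=q!\,e_1^*\wedge e_2^*\wedge\cdots\wedge e_{2q}^*\neq 0,\qquad \omega^{q+1}=0,
\end{gather*}
so the hypotheses on $(\mathrm{d}\alpha)^{p}$ and $(\mathrm{d}\alpha)^{p+1}$ in the statement determine $q=p$.

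Now I split according to whether $\alpha_0$ vanishes on the radical $A(\omega)$. If $\alpha_0|_{A(\omega)}=0$, then $\alpha_0\in\mathrm{span}(e_1^*,\dots,e_{2q}^*)$, hence $\alpha_0\wedge\omega^{q}=0$ because every summand in $\alpha_0$ already appears in $\omega^{q}$; moreover $A(\omega)\subseteq A(\alpha_0)$, so $\mathcal{C}_x(\alpha)=A(\omega)$, which has codimension $2q$. This is precisely the case $\mathrm{cl}(\alpha)(x)=2p$ with $(\mathrm{d}\alpha)^{p}\neq 0$ and $\alpha\wedge(\mathrm{d}\alpha)^{p}=0$. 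Conversely, if $\alpha_0|_{A(\omega)}\neq 0$, I adjust the basis inside $A(\omega)$ so that $\alpha_0=e_{2q+1}^*$ modulo $\mathrm{span}(e_1^*,\dots,e_{2q}^*)$, and after a further modification of $e_1,\dots,e_{2q}$ (which keeps $\omega$ in normal form) one may assume $\alpha_0=e_{2q+1}^*$. Then $\alpha_0\wedge\omega^{q}=q!\,e_{2q+1}^*\wedge e_1^*\wedge\cdots\wedge e_{2q}^*\neq 0$, and $\mathcal{C}_x(\alpha)=\ker(e_{2q+1}^*)\cap\mathrm{span}(e_{2q+1},\dots,e_n)=\mathrm{span}(e_{2q+2},\dots,e_n)$, which has codimension $2q+1$. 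This matches $\mathrm{cl}(\alpha)(x)=2p+1$ with $\alpha\wedge(\mathrm{d}\alpha)^{p}\neq 0$ and $(\mathrm{d}\alpha)^{p+1}=0$.

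The only delicate point is the simultaneous normalisation of $\alpha_0$ and $\omega$ in the second case: one must ensure that the adjustment used to kill the component of $\alpha_0$ along $\mathrm{span}(e_1^*,\dots,e_{2q}^*)$ can be realised by a change of basis that preserves the symplectic normal form of $\omega$. This is a routine consequence of the transitivity of the symplectic group of $\omega$ on non-isotropic complements, but it is the one step that requires care; everything else is a direct verification in the chosen basis.
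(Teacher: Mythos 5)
Your argument is correct and is essentially the one the paper intends: the Proposition is stated as classical (with references to Cartan and Godbillon) and is immediately followed by exactly the pointwise normal forms you construct, namely putting ${\rm d}\alpha(x)$ in skew-symmetric normal form and then normalising $\alpha(x)$ by a shear along the radical $A({\rm d}\alpha(x))$, which indeed preserves the normal form of ${\rm d}\alpha(x)$. The only point worth noting is that in the second bullet the stated hypotheses contain no condition on $({\rm d}\alpha)^{p+1}(x)$, so your claim that ``$q=p$'' needs the extra (easy) observation that for $\alpha(x)\neq 0$ the relations $({\rm d}\alpha)^{p}(x)\neq 0$ and $(\alpha\wedge({\rm d}\alpha)^{p})(x)=0$ already force $\operatorname{rank}{\rm d}\alpha(x)=2p$, by injectivity of $\beta\mapsto\beta\wedge({\rm d}\alpha(x))^{p}$ on the symplectic part when $p<q$; the residual case $\alpha(x)=0$ is a defect of the statement itself rather than of your proof.
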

In the first case, there exists a basis $\{\omega_1(x)=\alpha(x),\omega_2(x),\dots,\omega_n(x)\}$ of $T_x^*(M)$ such that
\begin{gather*} {\rm d}\alpha(x)=\omega_2(x)\wedge \omega_3(x)+ \dots +\omega_{2p}(x)\wedge \omega_{2p+1}(x)\end{gather*}
and
\begin{gather*} \mathcal{C}_x^*(\alpha)=\R\{\alpha(x)\}+A^*({\rm d}\alpha (x)).\end{gather*}
In the second case, there exists a basis $\{\omega_1(x)=\alpha(x),\omega_2(x),\dots,\omega_n(x)\}$ of $T_x^*(M)$ such that
\begin{gather*} {\rm d}\alpha(x)=\alpha(x)\wedge \omega_2(x)+ \dots +\omega_{2p-1}(x)\wedge \omega_{2p}(x)\end{gather*}
and
\begin{gather*}\mathcal{C}_x^*(\alpha)=A^*({\rm d}\alpha (x)).\end{gather*}

If the function $\operatorname{cl}(\alpha)(x)$ is constant, that is, $\operatorname{cl}(\alpha)(x)= \operatorname{cl}(\alpha)(y)$ for any $x, y \in M$, we say that the Pfaffian form $\alpha$ is of constant class and we denote by $\operatorname{cl}(\alpha)$ this constant. The distribution
\begin{gather*} x \rightarrow \mathcal{C}_x(\alpha)\end{gather*}
is then regular and it is an integrable distribution of dimension $n-\operatorname{cl}(\alpha)$, called the characteristic distribution of~$\alpha$. It is equivalent to say that the Pfaffian system
\begin{gather*} x \rightarrow \mathcal{C}^*_x(\alpha)\end{gather*}
is integrable and of dimension $\operatorname{cl}(\alpha)$.

If $M=G$ is a connected Lie group, we identify its Lie algebra $\g$ with the space of left invariant vector fields and its dual $\g^*$ with the space of left invariant Pfaffian forms. Then if $\alpha \in \g^*$, the differential ${\rm d}\alpha$ is the $2$-differential left invariant form belonging to $\Lambda^2(\g^*)$ and defined by
\begin{gather*}{\rm d}\alpha(X,Y)=-\alpha[X,Y]\end{gather*}
for any $X,Y \in \g$. It is obvious that any left invariant form $\alpha \in \g^*$ is of constant class and we will speak on the {\it Cartan class $\operatorname{cl}(\alpha)$ of a linear form $\alpha \in \g^*$}. We have
\begin{itemize}\itemsep=0pt
\item $\operatorname{cl}(\alpha)=2p+1$ if and only if $\alpha \wedge ({\rm d}\alpha)^p\neq 0$ and $ ({\rm d}\alpha)^{p+1}=0$,
\item $\operatorname{cl}(\alpha)=2p$ if and only if $({\rm d}\alpha )^p \neq 0 $ and $ \alpha \wedge ({\rm d}\alpha )^{p}=0$.
\end{itemize}

\begin{Definition}Let $\g$ be an $n$-dimensional Lie algebra.
\begin{itemize}\itemsep=0pt
\item It is called {\it contact Lie algebra} if $n=2p+1$ and if there exists a contact linear form, that is, a linear form of Cartan class equal to $2p+1$.
\item It is called {\it Frobenius Lie algebra} if $n=2p$ and if there exists a Frobenius linear form, that is, a linear form of Cartan class equal to $2p$.
\end{itemize}
\end{Definition}

If $\alpha \in \g^*$ is neither a contact nor a Frobenius form, the characteristic space $\mathcal{C}(\alpha)=\mathcal{C}_e(\alpha)$ at the unit~$e$ of~$G$ is not trivial and the characteristic distribution on $G$ given by $\mathcal{C}_x(\alpha)$ with $x \in G$ has a constant non-zero dimension. As it is integrable, the subspace $\mathcal{C}(\alpha)$ is a Lie subalgebra of~$\g$.

\begin{Proposition}\label{vergne}Let $\alpha \in \g^*$ be a linear form of maximal class, that is
\begin{gather*} \forall\, \beta \in \g^*, \qquad \operatorname{cl}(\alpha) \geq \operatorname{cl}(\beta).\end{gather*}
Then $\mathcal{C}(\alpha)$ is an abelian subalgebra of~$\g$.
\end{Proposition}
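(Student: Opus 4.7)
The plan is to turn maximality of $\operatorname{cl}(\alpha)$ into a universal vanishing $({\rm d}\beta)^{p+1}=0$ valid for every $\beta \in \g^*$, and then to read off from this relation the structure constants of $\g$ between elements of $\mathcal{C}(\alpha)$.

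Set $r = \operatorname{cl}(\alpha)$ and $p = [r/2]$. Reading the previous Proposition parity by parity, $\operatorname{cl}(\beta)\le r$ forces $({\rm d}\beta)^{p+1}=0$, so maximality gives $({\rm d}\beta)^{p+1}=0$ identically on $\g^*$. I would apply this to the one-parameter family $\beta = \alpha + t\gamma$ with $\gamma \in \g^*$ fixed but arbitrary, expanding
\begin{gather*}
0=({\rm d}\alpha + t\, {\rm d}\gamma)^{p+1} = ({\rm d}\alpha)^{p+1} + (p+1)\,t\,({\rm d}\alpha)^p \wedge {\rm d}\gamma + \mathrm{O}\big(t^2\big).
\end{gather*}
As this is a polynomial identity in $t \in \R$, the coefficient of $t$ must vanish, producing the key relation $({\rm d}\alpha)^p \wedge {\rm d}\gamma=0$ for every $\gamma \in \g^*$.

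Next I would fix an adapted basis $\{\omega_1=\alpha,\omega_2,\ldots,\omega_n\}$ of $\g^*$ with dual basis $\{e_1,\ldots,e_n\}$ of $\g$, so that ${\rm d}\alpha$ is in the canonical form of the Proposition. Then $({\rm d}\alpha)^p$ is a nonzero multiple of $\omega_2\wedge\cdots\wedge\omega_{2p+1}$ in the odd case $r=2p+1$ and of $\omega_1\wedge\omega_2\wedge\cdots\wedge\omega_{2p}$ in the even case $r=2p$, while $\mathcal{C}(\alpha)$ is spanned by $e_{2p+2},\ldots,e_n$, respectively by $e_{2p+1},\ldots,e_n$. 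Writing ${\rm d}\omega_k = \sum_{i<j} {\rm d}\omega_k(e_i,e_j)\,\omega_i\wedge\omega_j$ and taking $\gamma=\omega_k$ for each $k$, the $(2p+2)$-forms $({\rm d}\alpha)^p \wedge \omega_i \wedge \omega_j$ indexed by pairs $i<j$ with $e_i,e_j\in \mathcal{C}(\alpha)$ are linearly independent in $\Lambda^{2p+2}(\g^*)$, so the vanishing of $({\rm d}\alpha)^p \wedge {\rm d}\omega_k$ forces the coefficients ${\rm d}\omega_k(e_i,e_j)$ to be zero for every such pair and every $k$.

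The Maurer--Cartan identity ${\rm d}\omega_k(X,Y)=-\omega_k([X,Y])$ then gives $\omega_k([e_i,e_j])=0$ for every $k$, whence $[e_i,e_j]=0$ for every pair $e_i,e_j\in\mathcal{C}(\alpha)$, and $\mathcal{C}(\alpha)$ is abelian. The main obstacle I expect is purely notational bookkeeping: the canonical form of ${\rm d}\alpha$ places $\alpha$ inside the wedge $({\rm d}\alpha)^p$ in the even case but outside it in the odd case, so the surviving index ranges for $\mathcal{C}(\alpha)$ differ by one and the two parities have to be carried in parallel, even though the final computation is identical.
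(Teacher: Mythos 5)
Your argument is correct and follows the same essential route as the paper: both proofs perturb $\alpha$ along a line $\alpha+t\gamma$ (the paper takes $\gamma=\omega_j$ dual to a vector of $\mathcal{C}(\alpha)$) and extract from maximality of $\operatorname{cl}(\alpha)$ the first-order condition $({\rm d}\alpha)^p\wedge{\rm d}\gamma=0$, which kills the structure constants among the characteristic directions. Your write-up merely makes explicit the step the paper leaves terse, namely why non-abelianness of $\mathcal{C}(\alpha)$ would force $\operatorname{cl}(\alpha+t\omega_j)>\operatorname{cl}(\alpha)$ for some $t$.
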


\begin{proof} Assume that $\alpha$ is a linear form of maximal class. If $\operatorname{cl}(\alpha)=2p+1$, there exists a basis $\{\omega_1=\alpha,\omega_2,\dots,\omega_n\}$ of $\g^*$ such that ${\rm d}\alpha=\omega_2 \wedge \omega_3 + \dots + \omega_{2p} \wedge \omega_{2p+1}$ and $\mathcal{C}^*(\alpha)$ is generated by $\{\omega_{1},\dots,\omega_{2p+1}\}$. If the subalgebra $\mathcal{C}(\alpha)$ is not abelian, there exists $j$, $2p+2 \leq j \leq n$ such that
\begin{gather*} {\rm d}\omega_j \wedge \omega_{1} \wedge \dots \wedge \omega_{2p+1} \neq 0.\end{gather*}
 Then
\begin{gather*}\operatorname{cl}(\alpha+t\omega_j) > \operatorname{cl}(\alpha)\end{gather*}
for some $t \in \R$. But $\alpha $ is of maximal class. Then $\mathcal{C}(\alpha)$ is abelian. The proof when $\operatorname{cl}(\alpha)=2p$ is similar.
\end{proof}

Note that this result can alternatively derived from the result of M.~Duflo and M.~Vergne~\cite{Vergne1}.

Recall some properties of the class of a linear form on a Lie algebra. The proofs of these statements are given in \cite{GozeEllipse, GozeCras1, GR-DGA}.
\begin{itemize}\itemsep=0pt
 \item If $\g$ is a finite-dimensional nilpotent Lie algebra, then the class of any non-zero $\alpha \in \g^*$ is always odd.
 \item A real or complex finite-dimensional nilpotent Lie algebra is never a Frobenius Lie algebra. More generally, an unimodular Lie algebra is non-Frobenius~\cite{Diatta1, Ooms}.
 \item Let $\g$ be a real compact Lie algebra. Any nontrivial $\alpha \in \g^*$ has an odd Cartan class.
 \item Let $\g$ be a complex semisimple Lie algebra of rank $r$. Then any $\alpha \in \g^*$ satisfies $\ \operatorname{cl}(\alpha) \leq n-r+1$~\cite{GozeCras1}. In particular, a semisimple Lie algebra is never a Frobenius algebra. A~semisimple Lie algebra is a contact Lie algebra if its rank is~$1$, that is, $\g$ is isomorphic to~$\mathfrak{sl}(2,\C)$. In particular, if $\g$ is a contact real simple Lie algebra, then~$\g$ is isomorphic to~$\mathfrak{sl}(2,\R)$ or $\mathfrak{so}(3)$.
 \item The Cartan class of any linear nontrivial form on a simple nonexceptional complex Lie algebra of rank $r$ satisfies $\operatorname{cl}(\alpha) \geq 2r$~\cite{GozeCras1}. Moreover, if $\g$ is isomorphic of type~$A_r$, there exists a linear form of class~$2r$ which reaches the lower bound.
\item Any $(2p+1)$-dimensional contact real Lie algebra such that any nontrivial linear form is a~contact form is isomorphic to $\mathfrak{so}(3)$.
\end{itemize}

\subsection{Cartan class and the index of a Lie algebra} For any $\alpha \in \g^*$, we consider the stabilizer $\g_{\alpha }=\{X\in \g,\, \alpha\circ \operatorname{ad} X=0\}$ and $d$ the minimal dimension of $\g_{\alpha }$ when $\alpha$ lies in $\g^*$. It is an invariant of $\g$ which is called the index of $\g$. If $\alpha$ satisfies $\dim \g_{\alpha }=d$ then, from Proposition~\ref{vergne}, $\g_{\alpha }$ is an abelian subalgebra of $\g$. Considering the Cartan class of $\alpha$, $\g_{\alpha }$ is the associated subspace of ${\rm d}\alpha$:
\begin{gather*}\g_{\alpha }=A({\rm d}\alpha),\end{gather*}
 so the minimality is realized by a form of maximal class and we have $d=n-\operatorname{cl}(\alpha)+1$ if the Cartan class $\operatorname{cl}(\alpha)$ is odd or $d=n-\operatorname{cl}(\alpha)$ if $\operatorname{cl}(\alpha)$ is even. In particular,
 \begin{enumerate}\itemsep=0pt
 \item[1)] if $\g$ is a $2p$-dimensional Frobenius Lie algebra, then the maximal class is $2p$ and $d=0$,
 \item[2)] if $\g$ is $(2p+1)$-dimensional contact Lie algebra, then $d=n-n+1=1$.
\end{enumerate}
This relation between index and Cartan class is useful to compute sometimes this index. For example we have
\begin{Proposition}
Let $L_n$ or $Q_{2p}$ be the naturally graded filiform Lie algebras. Their index satisfy
\begin{enumerate}\itemsep=0pt
 \item[$1)$] $d(L_n)=n-2$,
 \item[$2)$] $d(Q_{2p})=2$.
\end{enumerate}
\end{Proposition}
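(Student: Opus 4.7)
My plan is to invoke the index--class relation derived in the preceding paragraph. Since $L_n$ and $Q_{2p}$ are nilpotent, every nontrivial linear form has odd Cartan class, so $d(\g)=n-\operatorname{cl}(\alpha)+1$ for $\alpha$ of maximal class on an $n$-dimensional such algebra. The problem thus reduces to computing the maximal Cartan class in each case, and the natural tool is the Maurer--Cartan system in the graded basis.

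For $L_n$, fix the standard basis $\{X_1,\dots,X_n\}$ with $[X_1,X_i]=X_{i+1}$ for $2\le i\le n-1$; the dual basis $\{\omega_1,\dots,\omega_n\}$ then satisfies $d\omega_1=d\omega_2=0$ and $d\omega_i=-\omega_1\wedge\omega_{i-1}$ for $3\le i\le n$. Consequently $d\alpha$ is always of the form $\omega_1\wedge\beta$ for every $\alpha\in\g^*$, so $(d\alpha)^2=0$ and $\operatorname{cl}(\alpha)\le 3$. Conversely $\alpha=\omega_n$ gives $\alpha\wedge d\alpha=-\omega_n\wedge\omega_1\wedge\omega_{n-1}\neq 0$, so the maximal class is exactly $3$ and $d(L_n)=n-3+1=n-2$.

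For $Q_{2p}$, the brackets are those of $L_{2p}$ together with $[X_i,X_{2p+1-i}]=(-1)^iX_{2p}$ for $2\le i\le p$. Only $d\omega_{2p}$ differs from the $L_{2p}$ computation, now equal to $-\omega_1\wedge\omega_{2p-1}-\Omega$ where
\[\Omega=\sum_{j=2}^{p}(-1)^j\,\omega_j\wedge\omega_{2p+1-j}\]
restricts to a nondegenerate $2$-form on $\operatorname{span}(\omega_2,\dots,\omega_{2p-1})$, so that $\Omega^{p-1}$ is a volume form there. Taking $\alpha=\omega_{2p}$, binomial expansion of $(d\alpha)^{p-1}$ annihilates every summand with two or more factors of $\omega_1$ and leaves $(-\Omega)^{p-1}$ plus a single $\omega_1$-term. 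Then $\alpha\wedge(d\alpha)^{p-1}$ has a summand proportional to $\omega_{2p}\wedge\Omega^{p-1}$ that avoids $\omega_1$ entirely, so it cannot cancel with the $\omega_1$-summand and the product is nonzero. One also verifies directly that $(d\alpha)^p=0$, using $\Omega^p=0$ and $\omega_{2p-1}\wedge\Omega^{p-1}=0$. Hence $\operatorname{cl}(\alpha)=2p-1$ and $d(Q_{2p})=2p-(2p-1)+1=2$.

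The main technical point is the computation of $(d\alpha)^{p-1}$ in the $Q_{2p}$ case: once one observes that $\omega_1$ can appear at most once in any surviving summand, the nonvanishing of $\alpha\wedge(d\alpha)^{p-1}$ follows from the mismatch of $\omega_1$-content between the two remaining terms. The ancillary identities $\Omega^{p-1}\neq 0$, $\Omega^p=0$ and $\omega_{2p-1}\wedge\Omega^{p-1}=0$ are routine consequences of the pairing structure of $\Omega$ on $\operatorname{span}(\omega_2,\dots,\omega_{2p-1})$.
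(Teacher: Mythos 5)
Your proof is correct and follows the same route the paper takes: it identifies the possible Cartan classes in the graded dual basis ($\{1,3\}$ for $L_n$ and $\{1,3,2p-1\}$ for $Q_{2p}$) and applies the relation $d=n-\operatorname{cl}(\alpha)+1$ for nilpotent algebras. The paper only states these class values and refers to a direct computation elsewhere, so your explicit verification that $\operatorname{cl}(\omega_{2p})=2p-1$ via the binomial expansion of $({\rm d}\omega_{2p})^{p-1}$ simply supplies the details the paper omits.
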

In fact $L_n$ is defined in a basis $\{e_0,\dots,e_{n-1}\}$ by $[e_0,e_i]=e_{i+1}$ for $i=1,\dots,n-2$ and we have $\operatorname{cl}(\alpha) \in \{1,3\}$ and $d=n-2$ for any $\alpha \in L_n^*$. The second algebra $Q_{2p}$ is defined in the basis $\{e_0,\dots , e_{2p-1}\}$ by $[e_0,e_i]=e_{i+1}$ for $i=1,\dots,2p-3$, $[e_{i},e_{2p-1-i}]=(-1)^{i-1}e_{2p-1}$ for $i=1,\dots,p-1$. In this case we have $\operatorname{cl}(\alpha) \in \{1,3,2p-1\}$ for any $\alpha \in Q_{2p}^*$ and $d=2$. Let us note that a direct computation of these indexes are given in~\cite{Adimi}.

\subsection{The coadjoint representation}

Let $G$ be a connected Lie group and $\g$ its (real) Lie algebra. The adjoint representation of $G$ on~$\g$ is the homomorphism of groups:
\begin{gather*}\operatorname{Ad}\colon \ G \rightarrow \operatorname{Aut}(\g)\end{gather*}
defined as follows. For every $x \in G$, let $A(x)$ be the automorphism of $G$ given by $A(x)(y)=xyx^{-1}$. This map is differentiable and the tangent map to the identity~$e$ of $G$ is an automorphism of~$\g$. We denote it by $\operatorname{Ad}(x)$.

\begin{Definition} The coadjoint representation of $G$ on the dual $\g^*$ of $\g$ is the homomorphism of groups:
\begin{gather*}\operatorname{Ad}^*\colon \ G \rightarrow \operatorname{Aut}( \g^*)\end{gather*}
defined by
\begin{gather*}\big\langle \operatorname{Ad}^*(x)\alpha,X\big\rangle=\big\langle \alpha,\operatorname{Ad}\big(x^{-1}\big)X\big\rangle\end{gather*}
for any $\alpha \in \g^*$ and $X \in \g$.
\end{Definition}
The coadjoint representation is sometimes called the $K$-representation. For $\alpha \in \g^*$, we denote by $\mathcal{O}(\alpha)$ its orbit, called the coadjoint orbit, for the coadjoint representation. The following result is classical~\cite{Ki}: any coadjoint orbit is an even-dimensional differentiable manifold endowed with a symplectic form.

Let us compute the tangent space to this manifold $\mathcal{O}(\alpha)$ at the point~$\alpha$. Any $\beta \in \mathcal{O}(\alpha)$ is written $\beta = \alpha \circ \operatorname{Ad}\big(x^{-1}\big)$ for some $x \in G$. The map $\rho\colon G \rightarrow \g^*$ defined by $\rho(x)=\alpha\circ \operatorname{Ad}(x)$ is differentiable and its tangent map at the identity of~$G$ is given by $\rho^T_e(X)={\rm i}(X){\rm d}\alpha$ for any $X \in \g$ with ${\rm i}(X){\rm d}\alpha(Y)=-\alpha [X,Y]$. Then the tangent space to $\mathcal{O}(\alpha)$ at the point $\alpha$ corresponds to $A^*({\rm d}\alpha)=\{\omega \in \g^*, \, \omega(X)=0 ,\, \forall\, X \in A({\rm d}\alpha)\}$.

\begin{Proposition}Consider a non-zero $\alpha \in \g^*$. The tangent space to $\mathcal{O}(\alpha)$ at the point $\alpha$ is isomorphic to the dual space $A^*({\rm d}\alpha)$ of the associated space $A({\rm d}\alpha)$ of ${\rm d}\alpha$.
\end{Proposition}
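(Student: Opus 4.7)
The plan is to make rigorous the computation already sketched just above the statement: realize $\mathcal{O}(\alpha)$ as the image of a smooth map from $G$, identify the tangent space of the orbit at~$\alpha$ with the image of the differential at the identity, and then recognize that image as $A^*({\rm d}\alpha)$.

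First I would introduce the smooth map $\rho\colon G \to \g^*$ defined by $\rho(x)=\operatorname{Ad}^*(x)\alpha=\alpha\circ\operatorname{Ad}(x^{-1})$. Its image is the orbit $\mathcal{O}(\alpha)$, and since the $G$-action is transitive on the orbit the tangent space $T_\alpha \mathcal{O}(\alpha)$ coincides with the image of the differential ${\rm d}\rho_e\colon T_eG=\g \to T_\alpha\g^*\cong\g^*$. Using that the tangent map of $\operatorname{Ad}$ at $e$ is $\operatorname{ad}$, a direct computation gives, for $X,Y\in\g$,
\begin{gather*}
{\rm d}\rho_e(X)(Y)=-\alpha[X,Y]={\rm d}\alpha(X,Y)=\big({\rm i}(X){\rm d}\alpha\big)(Y),
\end{gather*}
so ${\rm d}\rho_e(X)={\rm i}(X){\rm d}\alpha$ and therefore $T_\alpha \mathcal{O}(\alpha)=\{{\rm i}(X){\rm d}\alpha : X\in\g\}\subseteq\g^*$.

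The remaining step is a purely linear-algebraic identification of this image with $A^*({\rm d}\alpha)$. I would consider the linear map $\phi\colon \g\to\g^*$, $X\mapsto {\rm i}(X){\rm d}\alpha$. Its kernel is $A({\rm d}\alpha)$ by definition. For $Y\in A({\rm d}\alpha)$ and $\omega=\phi(X)$ one computes $\omega(Y)={\rm d}\alpha(X,Y)=-({\rm i}(Y){\rm d}\alpha)(X)=0$, so $\operatorname{im}\phi \subseteq A^*({\rm d}\alpha)$. Since $A^*({\rm d}\alpha)$ is the annihilator of $A({\rm d}\alpha)$ in $\g^*$, it has dimension $n-\dim A({\rm d}\alpha)=\dim \operatorname{im}\phi$, and the inclusion is an equality. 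This gives the desired isomorphism $T_\alpha\mathcal{O}(\alpha)\cong A^*({\rm d}\alpha)$.

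The only real obstacle is the sign-bookkeeping in differentiating $x\mapsto\alpha\circ\operatorname{Ad}(x^{-1})$ and applying $d(\operatorname{Ad})_e=\operatorname{ad}$ correctly; any sign choice affects neither the image of $\phi$ nor the final identification. Once this is in place the argument is just the annihilator–image duality for the skew-symmetric bilinear form~${\rm d}\alpha$, which is exactly the duality between $A({\rm d}\alpha)$ and $A^*({\rm d}\alpha)$ featured in the definition of the Cartan class.
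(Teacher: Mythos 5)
Your proposal is correct and follows essentially the same route as the paper: differentiate the orbit map $\rho$ at the identity to get $X\mapsto {\rm i}(X){\rm d}\alpha$, then identify the image with $A^*({\rm d}\alpha)$. You in fact supply the linear-algebra step (image of $X\mapsto{\rm i}(X){\rm d}\alpha$ equals the annihilator of its kernel $A({\rm d}\alpha)$, by skew-symmetry plus dimension count) that the paper leaves implicit.
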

\begin{Corollary}
Consider a non-zero $\alpha$ in $\g^*$. Then $\dim \mathcal{O}(\alpha)=2p$ if and only if $\operatorname{cl}(\alpha)=2p$ or $\operatorname{cl}(\alpha)=2p+1$.
\end{Corollary}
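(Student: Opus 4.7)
The plan is to read off both directions directly from the preceding Proposition, which identifies $T_\alpha \mathcal{O}(\alpha) \simeq A^*({\rm d}\alpha)$, combined with the normal forms for ${\rm d}\alpha$ recalled in Proposition on the Cartan class. The identification tells us
\begin{gather*}
\dim \mathcal{O}(\alpha) \;=\; \dim A^*({\rm d}\alpha) \;=\; n - \dim A({\rm d}\alpha),
\end{gather*}
so the whole statement reduces to computing $\dim A({\rm d}\alpha)$ from $\operatorname{cl}(\alpha)$; equivalently, to showing that the rank of ${\rm d}\alpha$ (as an alternating bilinear form on $\g$) equals $2p$ exactly when $\operatorname{cl}(\alpha)\in\{2p,2p+1\}$.

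First I would handle the forward implication by cases on the parity of $\operatorname{cl}(\alpha)$. If $\operatorname{cl}(\alpha)=2p+1$, pick the basis $\{\omega_1=\alpha,\omega_2,\dots,\omega_n\}$ of $\g^*$ for which
\begin{gather*}
{\rm d}\alpha \;=\; \omega_2\wedge\omega_3+\dots+\omega_{2p}\wedge\omega_{2p+1}.
\end{gather*}
This decomposable expression makes it immediate that $A({\rm d}\alpha)$ is the common kernel of $\omega_2,\dots,\omega_{2p+1}$, hence $\dim A({\rm d}\alpha)=n-2p$ and therefore $\dim \mathcal{O}(\alpha)=2p$. If $\operatorname{cl}(\alpha)=2p$, use the analogous normal form
\begin{gather*}
{\rm d}\alpha \;=\; \alpha\wedge\omega_2+\omega_3\wedge\omega_4+\dots+\omega_{2p-1}\wedge\omega_{2p}
\end{gather*}
to get $\dim A({\rm d}\alpha) = n-2p$ again, giving $\dim \mathcal{O}(\alpha)=2p$.

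For the converse, I would argue that if $\dim \mathcal{O}(\alpha)=2p$ then the rank of ${\rm d}\alpha$ equals $2p$, i.e.\ $({\rm d}\alpha)^p \neq 0$ and $({\rm d}\alpha)^{p+1}=0$. By the criterion stated in the Proposition, this forces $\operatorname{cl}(\alpha)\in\{2p,2p+1\}$ depending on whether $\alpha\wedge({\rm d}\alpha)^p$ vanishes or not. So the equivalence follows with no extra work.

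The only mild subtlety is making sure the identification $T_\alpha\mathcal{O}(\alpha)\simeq A^*({\rm d}\alpha)$ is really a dimension equality, not just an injection—but this is exactly what the previous Proposition provides (via the image of the tangent map $\rho^T_e\colon X\mapsto {\rm i}(X){\rm d}\alpha$, whose kernel is $A({\rm d}\alpha)$), so nothing extra needs to be verified. The proof is therefore just a transcription: apply the tangent-space description, then invoke the two normal forms for ${\rm d}\alpha$ to read off the rank.
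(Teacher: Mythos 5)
Your proposal is correct and follows exactly the route the paper intends: the Corollary is stated as an immediate consequence of the tangent-space identification $T_\alpha\mathcal{O}(\alpha)\simeq A^*({\rm d}\alpha)$ together with the two normal forms for ${\rm d}\alpha$, which give $\dim A({\rm d}\alpha)=n-2p$ in both the class-$2p$ and class-$(2p+1)$ cases. Your converse via the rank criterion $({\rm d}\alpha)^p\neq 0$, $({\rm d}\alpha)^{p+1}=0$ is likewise exactly the dichotomy recalled in the paper's earlier Proposition, so nothing is missing.
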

An immediate application is
\begin{Proposition}\label{simple}
Any $(2p+1)$-dimensional Lie algebra with $\dim \mathcal{O}(\alpha)=2p$ for all $\alpha \in \g ^* \setminus \{ 0\}$, is isomorphic to $\mathfrak{so}(3)$ or $\mathfrak{sl}(2,\R)$ and then of dimension $3$.
\end{Proposition}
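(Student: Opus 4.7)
My plan is to translate the orbit hypothesis into a Cartan-class constraint via the preceding corollary and then progressively constrain the structure of $\g$: perfect, then semisimple, then simple, then of absolute rank one. Assuming $p\geq 1$ (the case $p=0$ is a degenerate $1$-dimensional algebra that is anyway excluded by the conclusion), the corollary gives $\operatorname{cl}(\alpha)\in\{2p,2p+1\}$ for every nonzero $\alpha\in\g^*$; in particular no nonzero form has class $1$, so no nonzero form is closed, which is equivalent to $[\g,\g]=\g$.

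The main technical device is the following observation, which I will use repeatedly: for any ideal $I\subseteq\g$ and any $\alpha\in\g^*$ vanishing on $I$, one has $I\subseteq\mathcal{C}(\alpha)$, since clearly $I\subseteq A(\alpha)$, and for $X\in I$, $Y\in\g$ one has $[X,Y]\in I$, hence ${\rm d}\alpha(X,Y)=-\alpha[X,Y]=0$, placing $I$ in $A({\rm d}\alpha)$ as well. Applying this with $I$ the radical $r$ (a proper ideal because a perfect solvable algebra is zero) and $\alpha$ pulled back from a nonzero form on $\g/r$, I obtain $\operatorname{cl}(\alpha)\leq n-\dim r$; combined with $\operatorname{cl}(\alpha)\geq 2p=n-1$ this forces $\dim r\leq 1$. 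The case $\dim r=1$ is the main obstacle: the one-dimensional ideal must be central because any character of a perfect algebra is trivial, so $\g$ becomes a one-dimensional central extension of the semisimple quotient $\g/r$, which by Whitehead's second lemma splits as a direct sum $\g=\mathfrak{s}\oplus r$; but then $[\g,\g]=\mathfrak{s}$ is a proper subspace, contradicting perfectness. Hence $\g$ is semisimple, and applying the same trick to a direct summand $\g_2$ of a nontrivial semisimple decomposition $\g=\g_1\oplus\g_2$ yields $\operatorname{cl}(\alpha)\leq n-\dim\g_2\leq n-3$, again violating the hypothesis. So $\g$ is actually simple.

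Finally, the bound $\operatorname{cl}(\alpha)\leq n-r+1$ for semisimple Lie algebras of rank $r$ recalled in the earlier list of properties (which transfers from the complex to the real setting via complexification, since the Cartan class is preserved) combined with $\operatorname{cl}(\alpha)\geq n-1$ yields $r\leq 2$. A rank-two real simple Lie algebra is either absolutely simple, with complexification of complex type $A_2$, $B_2$, or $G_2$ and real dimension $8$, $10$, or $14$, or else the realification of the complex simple algebra $\mathfrak{sl}(2,\C)$, of real dimension $6$; in every case the real dimension is even and cannot equal the odd number $2p+1$. Hence $r=1$, leaving only the rank-one real simple algebras $\mathfrak{sl}(2,\R)$ and $\mathfrak{so}(3)$, both of dimension three, and a direct verification (already subsumed by the earlier bullet characterization of $\mathfrak{so}(3)$ and a short computation on $\mathfrak{sl}(2,\R)$) confirms that both satisfy the hypothesis.
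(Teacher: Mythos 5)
Your proof is correct, and its overall skeleton coincides with the paper's: translate the orbit condition into $\operatorname{cl}(\alpha)\in\{2p,2p+1\}$, reduce to the semisimple case, and finish with the rank bound $\operatorname{cl}(\alpha)\leq n-r+1$. The difference is in the middle step. The paper disposes of semisimplicity with one sentence ("with similar arguments developed in \cite{GozeCras1,GR-DGA}, we prove that such a Lie algebra is semisimple"), essentially the abelian-ideal estimate that it spells out only in the subsequent Remark for the even-dimensional case. You instead give a complete, self-contained argument: the observation that an ideal $I$ annihilated by $\alpha$ sits inside $\mathcal{C}(\alpha)$, hence $\operatorname{cl}(\alpha)\leq n-\dim I$, applied to the radical; this is a genuine improvement in rigor, because in the odd-dimensional setting a form vanishing on a one-dimensional ideal still satisfies $\operatorname{cl}(\alpha)\leq n-1=2p$, which the hypothesis permits, so the crude estimate alone does not exclude $\dim r=1$. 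Your handling of that residual case (perfectness forces the one-dimensional radical to be central, Whitehead's second lemma splits the extension, and the splitting contradicts $[\g,\g]=\g$) closes exactly the gap the paper leaves to the reader, and your final step correctly rules out rank $2$ by parity of dimension rather than by citation. The only (minor) points worth flagging are that the transfer of the class bound from the complex to the real semisimple case via complexification deserves the one-line justification you sketch, and that the converse verification for $\mathfrak{sl}(2,\R)$ and $\mathfrak{so}(3)$ should be stated explicitly; neither affects correctness.
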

\begin{proof} From \cite{GozeCras1, GR-DGA} any contact Lie algebra such that any nontrivial linear form is a contact form is isomorphic to $\mathfrak{so}(3)$. Assume now that any nontrivial form on $\g$ is of Cartan class equal to $2p$ or $2p+1$. With similar arguments developed in \cite{GozeCras1, GR-DGA}, we prove that such a Lie algebra is semisimple. But in this case, we have seen that $\g$ is isomorphic to $\mathfrak{sl}(2,\R)$ or $\mathfrak{so}(3)$.
\end{proof}

\begin{Remark} Assume that $\dim \g=2p$ and all the nontrivial coadjoint orbits are also of dimension~$2p$. Then for any nontrivial $\alpha \in \g^*$, $\operatorname{cl}(\alpha)=2p$. If $I$ is a nontrivial abelian ideal of~$\g$, there exists $\omega \in \g^*$, $\omega \neq 0$ such that $\omega(X)=0$ for any $X \in I$. The Cartan class of this form~$\omega$ is smaller than~$2p$. Then $\g$ is semisimple. But the behavior of the Cartan class on simple Lie algebra leads to a contradiction.
\end{Remark}

We deduce also from the previous corollary:
\begin{Proposition}\quad
\begin{enumerate}\itemsep=0pt
\item[$1.$] If $\g$ is isomorphic to the $(2p+1)$-dimensional Heisenberg algebra, then any nontrivial coadjoint orbit is of dimension $2p$.
\item[$2.$] If $\g$ is isomorphic to the graded filiform algebra $L_n$, then any nontrivial coadjoint orbit is of dimension~$2$.
\item[$3.$] If $\g$ is isomorphic to the graded filiform algebra $Q_n$, then any nontrivial coadjoint orbit is of dimension~$2$ or~$n-2$.
\item[$4.$] If $\g$ is a $(2p+1)$-dimensional $2$-step nilpotent Lie algebra with a coadjoint orbit of dimen\-sion~$2p$, then $\g$ is isomorphic to the $(2p+1)$-dimensional Heisenberg Lie algebra $($see~{\rm \cite{GR-DGA})}.
\item[$5.$] If $\g$ is a $n$-dimensional complex classical simple Lie algebra of rank $r$, then the maximal dimension of the coadjoint orbits is equal to $n-r$ if this number is even, if not to $n-r-1$ $($see~{\rm \cite{GozeCras1})}.
 \end{enumerate}
 \end{Proposition}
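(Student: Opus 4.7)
The common thread is the corollary $\dim\mathcal{O}(\alpha)=2\lfloor\operatorname{cl}(\alpha)/2\rfloor$, so each of items~1--3 and~5 reduces to identifying the Cartan classes that linear forms on the given Lie algebra can realise. For item~1, I would fix the standard basis $\{X_1,\ldots,X_p,Y_1,\ldots,Y_p,Z\}$ of the Heisenberg algebra with $[X_i,Y_i]=Z$ the only nonzero bracket, and write a general form in the dual basis as $\alpha=\sum a_ix_i^*+\sum b_iy_i^*+cz^*$. A direct computation gives $d\alpha=-c\sum x_i^*\wedge y_i^*$: if $c\neq 0$ then $(d\alpha)^p$ is a nonzero multiple of $x_1^*\wedge y_1^*\wedge\cdots\wedge x_p^*\wedge y_p^*$ and $\alpha\wedge(d\alpha)^p$ is a nonzero multiple of the top form on $\g^*$, so $\operatorname{cl}(\alpha)=2p+1$ and $\dim\mathcal{O}(\alpha)=2p$; when $c=0$, $d\alpha=0$ and the orbit reduces to a point.

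For items~2 and~3 the paper already records (just before the proposition) the lists of Cartan classes $\{1,3\}$ for $L_n$ and $\{1,3,n-1\}$ for $Q_n$; applying the corollary and observing that a \emph{nontrivial} orbit rules out class~$1$ yields dimensions $2$ for $L_n$ and $2$ or $n-2$ for $Q_n$. Item~5 is a similar one-line consequence of the corollary combined with the bound $\operatorname{cl}(\alpha)\leq n-r+1$ recalled in Section~2.1: the largest value of $2\lfloor\operatorname{cl}(\alpha)/2\rfloor$ with $\operatorname{cl}(\alpha)\leq n-r+1$ equals $n-r$ if $n-r$ is even and $n-r-1$ otherwise, and realisability in the classical cases is the content of~\cite{GozeCras1}.

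The one genuinely structural point, which the authors themselves cite to~\cite{GR-DGA}, is item~4. My sketch runs as follows. Let $\g$ be $(2p+1)$-dimensional and $2$-step nilpotent with a coadjoint orbit of dimension $2p$. Nilpotency forces every Cartan class to be odd, so the orbit of dimension $2p$ can only come from a contact form $\alpha$ with $\operatorname{cl}(\alpha)=2p+1$. The $2$-step hypothesis gives $[\g,\g]\subseteq Z(\g)$, which implies $Z(\g)\subseteq A(d\alpha)$ since for $Z\in Z(\g)$ one has $d\alpha(Z,Y)=-\alpha([Z,Y])=0$; but $\operatorname{cl}(\alpha)=2p+1$ means $\dim A(d\alpha)=1$, whence $\dim Z(\g)=1$. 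Nilpotency and the $2$-step condition force $[\g,\g]$ to be a nontrivial subspace of $Z(\g)$, so $[\g,\g]=Z(\g)$ is one-dimensional, and the induced bracket on $\g/Z(\g)$ is a nondegenerate skew pairing---precisely the defining data of the $(2p+1)$-dimensional Heisenberg algebra. This structural reduction is the main obstacle; once it is in place, the remaining items are essentially bookkeeping on top of the Cartan-class computations recalled in Section~2.
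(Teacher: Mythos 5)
Your proposal is correct and follows essentially the same route as the paper, which states this Proposition without proof as a direct consequence of the corollary $\dim\mathcal{O}(\alpha)=2\big[\tfrac{\operatorname{cl}(\alpha)}{2}\big]$ together with the Cartan-class computations for $\h_{2p+1}$, $L_n$, $Q_{2p}$ and the bounds recalled from \cite{GozeCras1}, citing \cite{GR-DGA} for item~4; your self-contained argument for item~4 (center contained in $A({\rm d}\alpha)$, hence one-dimensional, hence $[\g,\g]=Z(\g)$ and a nondegenerate induced pairing on $\g/Z(\g)$) is a correct reconstruction of that cited result. The only slip is in the "$n-r$ odd" branch of item~5, where the maximum of $2\lfloor c/2\rfloor$ over $c\le n-r+1$ would actually be $n-r+1$, not $n-r-1$; this is immaterial since $n-r$, being the number of roots, is always even for a complex simple Lie algebra.
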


\section{Lie algebras whose coadjoint orbits are of dimension 2 or 0}\label{section3}

In this section, we determine all Lie algebras whose coadjoint orbits are of dimension $2$ or $0$. This problem was initiated in \cite{Arnal,Beltita}. This is equivalent to say that the Cartan class of any linear form is smaller or equal to $3$. If $\g$ is a Lie algebra having this property, any direct product $\g_1=\g \bigoplus J$ of $\g$ by an abelian ideal $J$ satisfies also this property. We shall describe these Lie algebras up to an abelian direct factor, that is indecomposable Lie algebras. It is obvious that for any Lie algebra of dimension $2$ or $3$, the dimensions of the coadjoint orbits are equal to~$2$ or~$0$. We have also seen:
\begin{Proposition}Let $\g$ be a simple Lie algebra of rank $1$. Then for any $\alpha \neq 0 \in \g^*$, \mbox{$\dim \mathcal{O}(\alpha)=2$}. Conversely, if~$\g$ is a Lie algebra such that $\dim \mathcal{O}(\alpha)=2$ for any $\alpha \neq 0 \in \g^*$ then~$\g$ is simple of rank $1$.
\end{Proposition}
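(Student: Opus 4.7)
The forward direction is immediate. A real simple Lie algebra of rank~$1$ is isomorphic to $\mathfrak{sl}(2,\R)$ or $\mathfrak{so}(3)$, both $3$-dimensional and perfect. Perfectness rules out closed nonzero forms (class~$1$), and the dimension bound $\operatorname{cl}(\alpha)\leq 3$ is automatic, so $\operatorname{cl}(\alpha)\in\{2,3\}$ for every nontrivial $\alpha$. The preceding Corollary then gives $\dim\mathcal{O}(\alpha)=2$.

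For the converse, I fix $\g$ with $\operatorname{cl}(\alpha)\in\{2,3\}$ for every nontrivial $\alpha\in\g^*$ and argue in four steps. \textbf{(i)}~$\g$ is perfect: otherwise any nonzero $\alpha$ in the annihilator of $[\g,\g]$ would be closed (class~$1$), contradiction. \textbf{(ii)}~$\g$ is semisimple: if not, the last nonzero term $I$ of the derived series of the solvable radical is a nontrivial abelian ideal of~$\g$, and for any $\alpha$ vanishing on~$I$ one checks $I\subseteq\mathcal{C}(\alpha)$ since ${\rm d}\alpha(X,Y)=-\alpha([X,Y])=0$ whenever $X\in I$. Combining such an $\alpha$ with a form whose differential survives on a complement of $I$ would produce a linear form whose Cartan class falls outside $\{2,3\}$, contradicting the hypothesis; this step mirrors the semisimplicity arguments of~\cite{GozeCras1,GR-DGA} invoked in Proposition~\ref{simple}. \textbf{(iii)}~$\g$ is simple: a decomposition $\g=\g_1\oplus\g_2$ into two nonzero semisimple ideals would admit $\alpha_i\in\g_i^*$ with $\operatorname{cl}(\alpha_i)\geq 2$ (by perfectness), and since $\mathcal{C}(\alpha_1+\alpha_2)=\mathcal{C}(\alpha_1)\oplus\mathcal{C}(\alpha_2)$ the Cartan class is additive, giving $\operatorname{cl}(\alpha_1+\alpha_2)\geq 4$, a contradiction. \textbf{(iv)}~The rank is~$1$: for a simple Lie algebra of dimension~$n$ and rank~$r$ the index equals~$r$, so the index-class relation recalled above produces a form of Cartan class at least $n-r$; the bound $n-r\leq 3$ combined with $n\geq 3,\ r\geq 1$ leaves only $(n,r)=(3,1)$, and the real classification then forces $\g\cong\mathfrak{sl}(2,\R)$ or $\mathfrak{so}(3)$.

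The main obstacle is Step~(ii): ruling out \emph{perfect but non-semisimple} Lie algebras with every nontrivial form of class~$2$ or~$3$ requires the careful production of a form of forbidden class from the abelian ideal~$I$, whereas Steps~(i), (iii) and~(iv) are short applications of tools already introduced in the paper.
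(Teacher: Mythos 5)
Your four-step architecture (perfect $\Rightarrow$ semisimple $\Rightarrow$ simple $\Rightarrow$ rank $1$) is reasonable, and Steps (i) and (iv) are correct; but the proposal does not close the argument. The paper itself states this proposition with ``we have also seen'', deriving it from Proposition~\ref{simple} and the surrounding remarks, and the semisimplicity step there is in turn delegated to \cite{GozeCras1,GR-DGA}; so the decisive content is exactly your Step~(ii), and what you write for it is not a proof. Knowing that the terminal term $I$ of the derived series of the radical is an abelian ideal with $I\subseteq\mathcal{C}(\alpha)$ for every $\alpha$ vanishing on $I$ only yields the \emph{upper} bound $\operatorname{cl}(\alpha)\leq n-\dim I$, which is perfectly compatible with the hypothesis $\operatorname{cl}\in\{2,3\}$; the phrase ``combining such an $\alpha$ with a form whose differential survives on a complement of $I$'' names no specific form and no reason its class leaves $\{2,3\}$. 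You flag this yourself as ``the main obstacle'', which is an accurate self-assessment: the obstacle is not overcome. A way to actually finish is to observe that the hypothesis says $({\rm d}\alpha)^2=0$ for every $\alpha$, i.e., $\{{\rm d}\alpha\colon\alpha\in\g^*\}$ is an $n$-dimensional linear space of decomposable $2$-forms (injectively parametrized, by perfectness); the classical description of linear subspaces of the Pl\"ucker cone forces either $n\leq\dim\Lambda^2W=3$ for some $3$-dimensional $W$, or ${\rm d}\alpha=\beta\wedge\gamma_\alpha$ with $\beta$ fixed, in which case $\ker\beta$ is an abelian hyperplane and $[\g,\g]=[X_0,\ker\beta]$ has dimension $<n$, contradicting perfectness. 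Either route lands on $\dim\g=3$ and perfect, hence simple of rank~$1$, and it bypasses the Levi decomposition entirely.

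Separately, the justification of Step~(iii) is false as stated: the Cartan class is \emph{not} additive over a direct sum of ideals, and $\mathcal{C}(\alpha_1+\alpha_2)\neq\mathcal{C}(\alpha_1)\oplus\mathcal{C}(\alpha_2)$ in general. For $\g=\mathfrak{sl}(2,\R)\oplus\mathfrak{sl}(2,\R)$ with $\alpha_1,\alpha_2$ contact forms of class $3$ on each factor, one has $({\rm d}(\alpha_1+\alpha_2))^3=0$ while $(\alpha_1+\alpha_2)\wedge({\rm d}(\alpha_1+\alpha_2))^2\neq0$, so $\operatorname{cl}(\alpha_1+\alpha_2)=5$, not $6$, and the characteristic space is one-dimensional rather than zero. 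The conclusion you need survives for the right reason: since ${\rm d}\alpha_1\wedge{\rm d}\alpha_2$ lies in $\Lambda^2\g_1^*\wedge\Lambda^2\g_2^*$ it cannot be cancelled by $({\rm d}\alpha_1)^2$ or $({\rm d}\alpha_2)^2$, so $({\rm d}(\alpha_1+\alpha_2))^2\neq0$ and $\operatorname{cl}(\alpha_1+\alpha_2)\geq4$. Replace the additivity claim by this computation.
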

Recall that the rank of a real semisimple Lie algebra $\g$ is the dimension of any Cartan subalgebra $\h$ of $\g$. This is well defined since $\h$ is a Cartan subalgebra if and only if $\h_{\C}$ is Cartan in the complexified simple Lie algebra $\g_{\C}$. Now we examine the general case. Assume that~$\g$ is a Lie algebra of dimension greater or equal to~$4$ such that for any nonzero $\alpha \in \g^*$ we have $\operatorname{cl}(\alpha)=3,2$ or~$1$.

Assume in a first step that $\operatorname{cl}(\alpha)=2$ for any nonclosed $\alpha \in \g^*$. Let $\alpha$ be a non-zero $1$-form and $\operatorname{cl}(\alpha)=2$. Then there exists a basis $\{\alpha= \omega_1,\dots,\omega_n\}$ of $\g^*$ such that ${\rm d}\alpha={\rm d}\omega_1=\omega_1\wedge \omega_2$. This implies ${\rm d}({\rm d}\omega_1)=0= - \omega_1\wedge {\rm d}\omega_2$. Therefore ${\rm d}\omega_2=\omega_1 \wedge \omega$ with $\omega \in \g ^*$. As $\operatorname{cl}(\omega_2)\leq 2$, $\omega_2 \wedge \omega=0$. If $\{ X_1, X_2, \dots , X_n\}$ is the dual basis of $\{\alpha= \omega_1,\dots,\omega_n\}$, then $A({\rm d}\omega_1)=\K\{ X_3, \dots, X_n \} $ is an abelian subalgebra of $\g$. Suppose $ [X_1,X_2]=X_1+aX_2+ U$ where $U,X_1,X_2$ are linearly independent. The dual form of~$U$ would be of class 3 so $U=0$. Under the assumption of change of basis if $a\neq 0$ we can assume that $[X_1,X_2]=X_1$. So ${\rm d}\omega_2=\omega_1\wedge \omega$ and $\omega \wedge \omega_2=0$ imply that ${\rm d}\omega_2=0$. This implies that $A({\rm d}\omega_1)$ is an abelian ideal of codimension $2$. Let $\beta$ be in $A({\rm d}\omega_1)^*$, with ${\rm d}\beta\neq 0$. If such a form doesn't exist then $\K X_1 \oplus A({\rm d}\omega_1)$ is an abelian ideal of codimension~$1$. Otherwise ${\rm d}\beta=\omega_1\wedge \beta_1+\omega_2 \wedge \beta_2$ with $\beta_1,\beta_2 \in A({\rm d}\omega_1)$. As $\beta \wedge {\rm d}\beta={\rm d}\beta^2=0$, $\beta_1 \wedge \beta_2=0$ which implies ${\rm d}\beta=(a\omega_1+b\omega_2)\wedge \beta$. But ${\rm d}({\rm d}\beta)=0$ implies $a {\rm d}\omega_1\wedge \beta=0=a\omega_1\wedge \omega_2\wedge \beta$ thus $a=0$ and ${\rm d}\beta=b\omega_2 \wedge \beta$. We conclude that $[X_1, A({\rm d}\omega_1)] =0$ and $\K\{X_1\} \oplus A({\rm d}\omega_1)$ is an abelian ideal of codimension~1.

Assume now that there exists $\omega$ of class $3$. There exists a basis $\mathcal{B}=\{\omega_1,\omega_2, \omega_3=\omega , \dots , \omega_n\}$ of $\g^*$ such as ${\rm d}\omega ={\rm d}\omega_3=\omega_1\wedge \omega_2$ and the subalgebra $A({\rm d}\omega_3)=\K\{X_3,\dots,X_n\}$ is abelian. If $\{ X_1, \dots, X_n \}$ is the dual basis of $\mathcal{B}$, we can assume that $[X_1,X_2]=X_3. $ As $A({\rm d}\omega_3)$ is an abelian subalgebra of $\g$, for any $\alpha \in \g^*$ we have ${\rm d}\alpha= \omega_1 \wedge \alpha_1 + \omega_2 \wedge \alpha_2$ with $\alpha_1, \alpha_2 \in A({\rm d}\omega_3)^*$. But $\operatorname{cl}(\alpha) \leq 3$. Therefore $\omega_1 \wedge \alpha_1 \wedge \omega_2 \wedge \alpha_2=0$ which implies that $\alpha_1 \wedge \alpha_2=0$. So for any $\alpha \in A({\rm d}\omega_3)^*$ there exist $\alpha_1 \in A({\rm d}\omega_3)^*$ such that ${\rm d}\alpha=(a\omega_1+b\omega_2)\wedge \alpha_1$. Since $\g$ is indecomposable, for any $X \in A({\rm d}\omega_3)$ and $X \notin \mathcal{D}(\g)$, there exists $X_{12} \in \R\{ X_1, X_2 \} $ such that $[X_{12}, X]\neq 0$. We deduce

\begin{Proposition}\label{p8}
Let $\g$ an indecomposable Lie algebra such that the dimension of the nontrivial coadjoint orbits is $2$. We suppose that there exists $\omega \in \g^ *$ such that $\operatorname{cl}(\omega)=3$. If $n \ge 7$ then $\g=\m \oplus I_{n-1}$ where $I_{n-1}$ is an abelian ideal of codimension 1 and $\m$ a $1$-dimensional Lie subalgebra of $\g$.
\end{Proposition}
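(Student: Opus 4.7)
The strategy is to produce a closed $1$-form $\omega_1 \in \g^*$ whose kernel $A(\omega_1)$ is an abelian hyperplane; then $A(\omega_1)$ is automatically an ideal and $\g = \K\{X_1\} \oplus A(\omega_1)$ yields the desired decomposition. Continuing from the setup preceding the statement, I fix the basis $\{\omega_1, \omega_2, \omega_3 = \omega, \ldots, \omega_n\}$ with $d\omega_3 = \omega_1 \wedge \omega_2$, dual basis $\{X_1, \ldots, X_n\}$ satisfying $[X_1, X_2] = X_3$, $V := A(d\omega_3) = \K\{X_3, \ldots, X_n\}$ abelian of codimension two, and for each $j \geq 4$, $d\omega_j = (a_j\omega_1 + b_j\omega_2) \wedge \alpha_j$ with $\alpha_j \in \K\{\omega_3, \ldots, \omega_n\}$. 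Applying $(d\alpha)^2 = 0$ to $\alpha = \omega_{j_1} + t\omega_{j_2}$ and isolating the $t$-coefficient gives
\[
(a_{j_1} b_{j_2} - a_{j_2} b_{j_1})\,\omega_1 \wedge \omega_2 \wedge \alpha_{j_1} \wedge \alpha_{j_2} = 0,
\]
so propagating over all pairs yields one of two cases: (A) all $(a_j, b_j)$, $j \geq 4$, are proportional, or (B) all $\alpha_j$ are proportional to a single $\alpha$.

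In case (A), after a change of basis on $\K\{X_1, X_2\}$ I may assume $b_j = 0$, so $d\omega_j = \omega_1 \wedge \alpha_j$ for every $j \geq 3$ (with $\alpha_3 = \omega_2$). The key subclaim is that the $\alpha_j$'s span a subspace of dimension at least two. If not, after a further basis change among $\omega_4, \ldots, \omega_n$ one arranges $d\omega_4 = \omega_1 \wedge \alpha$ and $d\omega_j = 0$ for $j \geq 5$; a bracket computation then shows $\mathcal{D}(\g) \subset \K\{X_1, X_2, X_3, X_4\}$ and that every $X_j$, $j \geq 5$, with $\alpha(X_j) = 0$ is central. Since $n \geq 7$ gives $\dim(\ker(\alpha) \cap \K\{X_5, \ldots, X_n\}) \geq n - 5 \geq 2$, such an $X_j$ outside $\mathcal{D}(\g)$ exists, contradicting indecomposability via the criterion recalled just before the statement. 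With two linearly independent $\alpha_j$'s, write $d\omega_1 = c_1 \omega_1 \wedge \omega_2 + \omega_1 \wedge \mu_1' + \omega_2 \wedge \nu_1'$ (with $\mu_1', \nu_1' \in \K\{\omega_3, \ldots, \omega_n\}$, using abelianity of $V$); then $d^2 \omega_j = 0$ yields $(c_1 \omega_2 + \mu_1') \wedge \alpha_j = 0 = \nu_1' \wedge \alpha_j$, forcing $c_1 = \mu_1' = \nu_1' = 0$, i.e.\ $d\omega_1 = 0$. An analogous argument with $\omega_2 + t\omega_j$ shows $d\omega_2 \in \omega_1 \wedge \g^*$, so every $d\omega_k$ has $\omega_1$ as a factor; thus $A(\omega_1)$ is abelian, and closedness of $\omega_1$ makes it an ideal, completing case~(A).

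In case~(B) one normalizes (via basis change among $\omega_4, \ldots, \omega_n$) so that $d\omega_4 = \omega_1 \wedge \alpha$, $d\omega_5 = \omega_2 \wedge \alpha$, and $d\omega_j = 0$ for $j \geq 6$; then $\mathcal{D}(\g) \subset \K\{X_1, \ldots, X_5\}$ and every $X_j$, $j \geq 6$, with $\alpha(X_j) = 0$ is central. For $n \geq 7$, $\dim(\ker(\alpha) \cap \K\{X_6, \ldots, X_n\}) \geq n - 6 \geq 1$ yields a central element outside $\mathcal{D}(\g)$, again contradicting indecomposability. So case~(B) is excluded, only case~(A) with two independent $\alpha_j$'s occurs, and the proposition follows. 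The main obstacle is the intricate normalization preceding the central-element argument, which must be set up precisely enough that the dimension counts $n - 5 \geq 2$ and $n - 6 \geq 1$ translate into the existence of a central summand; this is the only place where the hypothesis $n \geq 7$ is essential.
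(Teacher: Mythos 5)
Your overall architecture coincides with the one the paper merely sketches (the normalization ${\rm d}\omega_3=\omega_1\wedge\omega_2$ with $A({\rm d}\omega_3)$ abelian, decomposability of every ${\rm d}\omega_j$ forced by $({\rm d}\beta)^2=0$, and the exclusion of degenerate configurations via a central element outside $\mathcal{D}(\g)$ contradicting indecomposability); since the paper stops at ``we deduce'', your case analysis is a legitimate completion rather than a different route. The main branch of case (A) --- two independent $\alpha_{j}$'s inside $\K\{\omega_3,\dots,\omega_n\}$ forcing $c_1=\mu_1'=\nu_1'=0$, hence ${\rm d}\omega_1=0$ and $\omega_1$ dividing every ${\rm d}\omega_k$, so that $A(\omega_1)$ is an abelian hyperplane containing $\mathcal{D}(\g)$ and therefore an ideal --- is correct and complete.

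The one step that does not hold as written is the centrality claim used to kill the degenerate subcase of (A) and case (B): ``every $X_j$ with $\alpha(X_j)=0$ is central.'' Centrality of $X_j$ also requires $[X_1,X_j]$ and $[X_2,X_j]$ to have no components along $X_1$ and $X_2$, i.e., requires $X_j$ to be annihilated by the forms $\mu,\nu$ (resp.\ $\mu',\nu'$) occurring in ${\rm d}\omega_1=c\,\omega_1\wedge\omega_2+\omega_1\wedge\mu+\omega_2\wedge\nu$ (resp.\ in ${\rm d}\omega_2$). In the degenerate subcase of (A), for example, nothing you have invoked prevents ${\rm d}\omega_2=\omega_1\wedge\mu'$ with $\mu'$ not proportional to $\alpha$, and then $[X_1,X_j]=-\mu'(X_j)X_2\neq 0$ for some $X_j$ with $\alpha(X_j)=0$, so the claim is false as stated. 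The repair is available and should be made explicit: ${\rm d}^2\omega_4=0$ (and ${\rm d}^2\omega_5=0$ in case (B)) forces $\mu\wedge\alpha=\nu\wedge\alpha=0$, while $({\rm d}\omega_1)^2=({\rm d}\omega_2)^2=0$ and ${\rm d}^2\omega_3=0$ confine the remaining unconstrained forms to at most one extra direction, so the subspace of $X_j$'s satisfying all the required linear conditions still has dimension at least $n-6\geq 1$; this is precisely where the hypothesis $n\geq 7$ is consumed, so the count deserves to be written out. Two smaller points: the dichotomy (A)/(B) obtained by ``propagating over pairs'' is only valid after restricting to the indices $j$ with ${\rm d}\omega_j\neq 0$ (zero forms satisfy both alternatives vacuously), and the decomposition ${\rm d}\omega_j=(a_j\omega_1+b_j\omega_2)\wedge\alpha_j$ with $\alpha_j\in\K\{\omega_3,\dots,\omega_n\}$ uses the normalization $[X_1,X_2]=X_3$ to suppress a possible $\omega_1\wedge\omega_2$ term, which is worth recording.
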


It remain to study the particular cases of dimension $4$, $5$ and $6$. The previous remarks show that:
\begin{itemize}\itemsep=0pt
\item If $\dim \g=4$ then $\g $ is isomorphic to one of the following Lie algebra given by its Maurer--Cartan equations
\begin{gather*}
\begin{cases}
{\rm d}\omega_3=\omega_1 \wedge \omega_2 , \\
{\rm d}\omega_1=\omega_1 \wedge \omega_4, \\
{\rm d}\omega_2=- \omega_2 \wedge \omega_4 , \\
{\rm d}\omega_4=0,
\end{cases}
\qquad
\begin{cases}
{\rm d}\omega_3=\omega_1 \wedge \omega_2, \\
{\rm d}\omega_1=\omega_2 \wedge \omega_4 , \\
{\rm d}\omega_2=- \omega_1 \wedge \omega_4, \\
{\rm d}\omega_4=0,
\end{cases}
\qquad \m\oplus I_3,
\end{gather*}
where $I_3$ is an abelian ideal of dimension $3$.

\item If $\dim \g=5$ then $\g $ is isomorphic to one of the following Lie algebra
\begin{gather*}
\begin{cases}
{\rm d}\omega_3=\omega_1 \wedge \omega_2 ,\\
{\rm d}\omega_1={\rm d}\omega_2= 0 ,\\
{\rm d}\omega_4=\omega_1 \wedge \omega_3 ,\\
{\rm d}\omega_5= \omega_2 \wedge \omega_3,
\end{cases}
\qquad \m\oplus I_4,
\end{gather*}
where $I_4$ is an abelian ideal of dimension $4$.

\item If $\dim \g=6$ then $\g $ is isomorphic to one of the following Lie algebra
\begin{gather*}
\begin{cases}
{\rm d}\omega_3=\omega_1 \wedge \omega_2 ,\\
{\rm d}\omega_1={\rm d}\omega_2={\rm d}\omega_4= 0 ,\\
{\rm d}\omega_5=\omega_2 \wedge \omega_4 ,\\
{\rm d}\omega_6= \omega_1 \wedge \omega_4,
\end{cases}
\qquad \m\oplus I_5,
\end{gather*}
where $I_5$ is an abelian ideal of dimension $5$.
\end{itemize}

\begin{Remark}\quad\begin{enumerate}\itemsep=0pt
\item Among the Lie algebras $\g=\m \oplus I_{n-1}$ where $I_{n-1}$ is an abelian ideal of dimension $n-1$ and~$\m$ a~$1$-dimensional Lie subalgebra of~$\g$, there exist a family of nilpotent Lie algebras which are the ``model'' for a given characteristic sequence (see~\cite{GR-Kegel, RBreadth}). They are the nilpotent Lie algebras~$L_{n,c}$, $c\in \{(n-1,1),(n-3,2,1),\dots,(2,1,\dots,1)\}$ defined by
\begin{gather}
[U,X_1]=X_2, \ [U,X_2]=X_3, \ \dots, \ [U,X_{n_1-1}]=X_{n_1}, \ [U,X_{n_1}]=0,\nonumber\\
[U,X_{n_1+1}]=X_{n_1+2}, \ [ U,X_{n_1+2}]=X_{n_1+3}, \ \dots, \ [U,X_{n_2-1}]=X_{n_2}, \ [U,X_{n_2}]=0,\nonumber\\
\cdots\cdots\cdots\cdots\cdots\cdots\cdots\cdots\cdots\cdots\cdots\cdots\cdots\cdots\cdots\cdots\cdots\cdots\cdots\cdots\cdots\cdots\nonumber \\
[ U,X_{n_{k-2}+1}]=X_{n_{k-2}+2}, \ \dots, \ [ U,X_{n_{k-1}-1}]=X_{n_{k-1}}, \ [U,X_{n_{k-1}}]=0.\label{Lc}
\end{gather}
The characteristic sequence $c$ corresponds to $c(U)$ and $\{X_1,\dots,X_{n_{k-1}}\}$ is a Jordan basis of $\operatorname{ad} U$. We shall return to this notion in the next section.

\item Let $U(\g)$ be the universal enveloping algebra of $\g$ and consider the category $U(\g)-\mathcal{M}{\rm od}$ of right $U(\g)$-module. Then if~$\g$ is a Lie algebra described in this section (that is with coadjoint orbits of dimension 0 or 2) thus any $U(\g)$-mod satisfy the property that ``any injective hulls of simple right $U(\g)$-module are locally Artinian'' (see~\cite{Ha}).

\item The notion of elementary quadratic Lie algebra was introduced by G.~Pinczon and R.~Ushirobira~\cite{Pin}. They also prove that is $\g$ is an elementary quadratic Lie algebra, then all coadjoint orbits have dimension at most~$2$.
\end{enumerate}
\end{Remark}

\section[Lie algebras whose nontrivial coadjoint orbits are of dimension 4]{Lie algebras whose nontrivial coadjoint orbits\\ are of dimension 4}\label{section4}

We generalize some results of the previous section, considering here real Lie algebras such that for a fixed $p \in \mathbb{N}$, $\dim\mathcal{O}(\omega)=2p$ or $0$ for any $\omega \in \g^*$. We are interested, in this section, in the case $p=2$. The Cartan class of any nonclosed linear form is equal to~$5$ or~$4$.

\begin{Lemma}\label{o4} Let $\g$ be a Lie algebra whose Cartan class of any nontrivial and nonclosed linear form is $4$ or $5$. Then $\g$ is solvable.
\end{Lemma}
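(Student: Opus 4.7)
The approach is by contradiction via the Levi decomposition. Assume $\g$ is not solvable and write $\g=\mathfrak{s}\ltimes\mathfrak{r}$ where $\mathfrak{r}$ is the solvable radical and $\mathfrak{s}\neq 0$ is a Levi subalgebra. Since $\mathfrak{r}$ is an ideal, the projection $\pi\colon\g\to\mathfrak{s}$ is a Lie algebra homomorphism, and its pull-back $\pi^{*}\colon\mathfrak{s}^{*}\hookrightarrow\g^{*}$ commutes with ${\rm d}$. The first computation to carry out is $A(\pi^{*}\beta)=\pi^{-1}(A(\beta))$ and $A({\rm d}\pi^{*}\beta)=\pi^{-1}(A({\rm d}\beta))$, which gives $\mathcal{C}(\pi^{*}\beta)=\pi^{-1}(\mathcal{C}(\beta))$ and hence $\operatorname{cl}(\pi^{*}\beta)=\operatorname{cl}(\beta)$.

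Since $[\mathfrak{s},\mathfrak{s}]=\mathfrak{s}$, no nonzero element of $\mathfrak{s}^{*}$ is closed, so $\pi^{*}\beta$ is a nonzero nonclosed form of the same Cartan class as $\beta$. The hypothesis on $\g$ then forces $\operatorname{cl}(\beta)\in\{4,5\}$ for every nonzero $\beta\in\mathfrak{s}^{*}$, i.e.\ every nontrivial coadjoint orbit of $\mathfrak{s}$ has dimension $4$. Decomposing $\mathfrak{s}=\mathfrak{s}_{1}\oplus\cdots\oplus\mathfrak{s}_{k}$ into simple ideals and applying the same pull-back argument to each projection $\mathfrak{s}\to\mathfrak{s}_{i}$ (whose kernel is again an ideal) reduces the problem to the case where $\mathfrak{s}$ itself is simple.

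For simple real $\mathfrak{s}$ of dimension $n$ and rank $r$, the Killing form identifies $\mathfrak{s}^{*}$ with $\mathfrak{s}$ equivariantly, so coadjoint and adjoint orbit dimensions agree. The regular semisimple orbit has dimension $n-r$, so the conclusion of the previous step forces $n-r=4$. This immediately rules out the rank-$1$ simple algebras $\mathfrak{sl}(2,\R)$ and $\mathfrak{so}(3)$ (where $n=3$ and the maximal Cartan class is $3$), and, combined with the bounds recalled in Section~$2$ — namely $\operatorname{cl}(\alpha)\le n-r+1$ for semisimple Lie algebras and $\operatorname{cl}(\alpha)\ge 2r$ for classical simple ones — rules out the absolutely simple algebras of rank $r\ge 2$ as well, since they satisfy $n-r\ge 6$.

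The main obstacle I anticipate is this last step: going carefully through the short list of real simple Lie algebras with $n-r$ small and confirming that none of them can have \emph{every} nontrivial coadjoint orbit of dimension exactly $4$ (both the regular semisimple and the nilpotent orbits). Once each such candidate simple Levi factor is eliminated, the semisimple part of $\g$ must vanish, contradicting non-solvability and completing the argument.
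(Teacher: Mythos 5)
Your reduction is sound and in fact more careful than the paper's own argument: the pullback computation $\mathcal{C}(\pi^{*}\beta)=\pi^{-1}(\mathcal{C}(\beta))$ along the quotient map $\g\to\g/\mathfrak{r}\cong\mathfrak{s}$ is correct, the observation that no nonzero form on $\mathfrak{s}$ is closed is right, and the further reduction to the simple ideals of $\mathfrak{s}$ works. (The paper disposes of the semisimple case by inspecting essentially only $\mathfrak{so}(4)$ and then asserts in one line that the Levi part of a nonsolvable algebra must be trivial.) You also correctly locate where the real work lies: real simple Lie algebras with $n-r=4$.

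The difficulty is that the case you leave open is not a loose end but a genuine counterexample, so it cannot be closed. Your dimension count does exclude the \emph{absolutely} simple real algebras ($n-r=2$ in rank one, $n-r\ge 6$ in rank $\ge 2$), but a real simple Lie algebra need not be absolutely simple: $\mathfrak{sl}(2,\C)$ regarded as a real Lie algebra, i.e.\ $\mathfrak{so}(3,1)$, has $n=6$ and rank $r=2$ in the sense used in the paper, hence $n-r=4$. Every nonzero element of $\mathfrak{sl}(2,\C)$ is either regular semisimple or a nonzero nilpotent, and in both cases its centralizer is a complex line, of real dimension $2$; transporting this through the Killing form, $A({\rm d}\alpha)$ has dimension $2$ for every nonzero $\alpha\in\g^{*}$, so every nontrivial coadjoint orbit has dimension $4$ and every nonzero linear form has Cartan class $4$ (on the nilpotent cone, where $\alpha$ vanishes on $A({\rm d}\alpha)$) or $5$ (elsewhere). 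One can check this directly: in the basis dual to $\{h,e,f,{\rm i}h,{\rm i}e,{\rm i}f\}$ one finds ${\rm d}\omega_2=-2\,\omega_1\wedge\omega_2+2\,\omega_4\wedge\omega_5$, which has class $4$, and ${\rm d}\omega_1=-\omega_2\wedge\omega_3+\omega_5\wedge\omega_6$, which has class $5$. So $\mathfrak{so}(3,1)$ satisfies the hypothesis of the lemma without being solvable, and the statement as printed is false unless such Levi factors are tacitly excluded. The paper's proof shares exactly this blind spot: among the six-dimensional rank-two semisimple real algebras it treats only the compact form $\mathfrak{so}(4)$ (where a form supported on one $\mathfrak{so}(3)$ factor has class $3$), never the simple indecomposable form $\mathfrak{so}(3,1)$. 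Any repair must either add $\mathfrak{so}(3,1)$ (and algebras with such a Levi factor) to the conclusion or strengthen the hypothesis.
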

\begin{proof} If $\g$ is a simple Lie algebra of rank $r$ and dimension $n$, then the Cartan class of any linear form $\omega \in \g^*$ satisfies $c \leq n-r+1$ and this upper bound is reached. Then $n-r+1=4$ or $5$ and the only possible case is for $r=2$ and $\g=\mathfrak{so}(4)$. Since this algebra is compact, the Cartan class is odd. We can find a basis of $\mathfrak{so}(4)$ whose corresponding Maurer--Cartan equations are
\begin{gather*}
 \begin{cases}
{\rm d}\omega_1=-\omega_2 \wedge\omega_4-\omega_3\wedge\omega_5, \\
{\rm d}\omega_2=\omega_1 \wedge\omega_4-\omega_3\wedge\omega_6, \\
{\rm d}\omega_3=\omega_1 \wedge\omega_5+\omega_2\wedge\omega_6, \\
{\rm d}\omega_4=-\omega_1 \wedge\omega_2-\omega_5\wedge\omega_6, \\
{\rm d}\omega_5=-\omega_1 \wedge\omega_3+\omega_4\wedge\omega_6, \\
{\rm d}\omega_6=-\omega_2 \wedge\omega_3-\omega_4\wedge\omega_5.
\end{cases}
\end{gather*}
If each of the linear forms of this basis has a Cartan class equal to~$5$, it is easy to find a linear form, for example $\omega_1+ \omega_6$, of Cartan class equal to $3$. Then $\g$ is neither simple nor semisimple. This implies also that the Levi part of a nonsolvable Lie algebra is also trivial, then~$\g$ is solvable.
\end{proof}

\subsection{Description of these Lie algebras}

A consequence of Lemma~\ref{o4} is that $\g$ contains a nontrivial abelian ideal. From the result of the previous section, the codimension of this ideal~$I$ is greater or equal to~$2$ and $\g=\mm \oplus I$ where~$\mm$ is a vector subspace of $\g$ of dimension greater or equal to~$2$.

Assume in a first time that $\dim \mm=2$. Since $I$ is a maximal abelian ideal, the Cartan class of any nontrivial linear form is $4$ or $5$ and the coadjoint nontrivial orbits are of dimension~$4$. To precise this case it remains to describe the action of $\mm$ on $I$ when we consider the second case. Assume that $\g=\mm \oplus I$ and $\dim \mm =2$. Let $\{T_1,T_2\}$ be a basis of $\mm$. Then $\widetilde{\g}=\g/\K\{T_2\}$ is a~Lie algebra whose any nonclosed linear form is of class $2$ or $3$. Such Lie algebra is described in Proposition~\ref{p8}.

\begin{Proposition} \label{c4} Let $\g$ be a Lie algebra, $\g=\mm \oplus I$ where $I$ is an abelian ideal of codimension~$2$ and whose coadjoint orbit of any nonclosed linear form is of dimension $4$. Let $\{T_1,T_2\}$ be a~basis of $\mm$ and $\widetilde{\g}=\g/\K\{T_2\}$. Then $\g$ is a~one-dimensional extension by a derivation~$f$ of~$\widetilde{\g}$ such that $f(T_1)=0$, $\operatorname{Im}(f) =\operatorname{Im} (\operatorname{ad} T_1)$ and for any $Y \in \operatorname{Im}(\operatorname{ad} T_1)$, there exist $X_1,X_2 \in I$ linearly independent such that
\begin{gather*}f(X_2)=[T_1,X_1]=Y.\end{gather*}
\end{Proposition}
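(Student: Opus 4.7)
The plan is to realize $\g$ as a one-dimensional extension $\g=\widetilde{\g}\oplus \K T_{2}$, where $\widetilde{\g}=\K T_{1}\oplus I$ is the codimension-one subalgebra and $f:=\operatorname{ad}_{T_{2}}\big|_{\widetilde{\g}}$ is the extending derivation. First I would fix the splitting: since every nonclosed form on $\g$ has Cartan class $4$ or $5$ and $I$ is maximal abelian, a suitable choice of $T_{1},T_{2}\in\mm$ makes $\widetilde{\g}$ an ideal of $\g$ --- otherwise a $T_{2}$-component of $[T_{2},T_{1}]$ would let one build a form on $\g$ of class $\geq 6$, contradicting the hypothesis. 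Hence $f$ is a well-defined derivation of $\widetilde{\g}$, and by Proposition~\ref{p8} applied to $\widetilde{\g}$ the bracket on $\widetilde{\g}$ is determined by $\operatorname{ad}_{T_{1}}\big|_{I}$.

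Then the three listed properties of $f$ are all proved by translating the constraint $(\operatorname{d}\alpha)^{3}=0$ for every $\alpha\in\g^{*}$ into linear algebra on $f|_{I}$ and $\operatorname{ad}_{T_{1}}|_{I}$. For $f(T_{1})=0$, I would write $[T_{2},T_{1}]=aT_{1}+V$ with $V\in I$; if $a\neq 0$ the differential of the dual form to $T_{1}$ mixes the term $-aT_{2}^{*}\wedge T_{1}^{*}$ with the terms $T_{1}^{*}\wedge X^{*}$ inherited from $\operatorname{ad}_{T_{1}}$, yielding $(\operatorname{d}T_{1}^{*})^{3}\neq 0$, so $a=0$. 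The residual $V$ is absorbed by a shift $T_{2}\mapsto T_{2}+W$ with $W\in I$ solving $[W,T_{1}]=-V$, and solvability reduces to $V\in\operatorname{Im}\operatorname{ad}_{T_{1}}$, which is forced by the same class argument applied to the form dual to a hypothetical vector in $\operatorname{Im}f\setminus\operatorname{Im}\operatorname{ad}_{T_{1}}$. This argument simultaneously proves $\operatorname{Im}f\subseteq\operatorname{Im}\operatorname{ad}_{T_{1}}$, and the reverse inclusion follows by the mirror computation with the roles of $T_{1}$ and $T_{2}$ exchanged.

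For the linear independence of $X_{1}$ and $X_{2}$ I would argue by contradiction: given $Y\in\operatorname{Im}\operatorname{ad}_{T_{1}}=\operatorname{Im}f$, suppose every pair $(X_{1},X_{2})\in I\times I$ with $[T_{1},X_{1}]=Y=f(X_{2})$ is collinear. Then, modulo the respective kernels of $\operatorname{ad}_{T_{1}}|_{I}$ and $f|_{I}$, the two preimages lie on a common line $\K X\subset I$. Picking a form $\alpha\in\g^{*}$ annihilating both kernels but pairing nontrivially with $Y$ and $X$, a direct computation exhibits the factor $T_{1}^{*}\wedge T_{2}^{*}\wedge X^{*}$ in a nonzero $(\operatorname{d}\alpha)^{3}$, producing class $\geq 6$, a contradiction. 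The main obstacle is this last step: constructing the witness form $\alpha$ requires careful tracking of the kernels of $\operatorname{ad}_{T_{1}}$ and $f$ inside $I$ and of their interaction with $\mm$, and it is the heart of the proof; the earlier steps $f(T_{1})=0$ and $\operatorname{Im}f=\operatorname{Im}\operatorname{ad}_{T_{1}}$ are, by comparison, essentially routine class-bumping arguments once the ideal structure of $\widetilde{\g}$ has been secured.
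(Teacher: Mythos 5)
Your proof has a structural flaw that invalidates each of its steps: in the presence of an abelian ideal $I$ of codimension $2$, the condition $({\rm d}\alpha)^{3}=0$ that you propose to ``translate into linear algebra'' holds automatically for \emph{every} $\alpha\in\g^{*}$, so it carries no information, and the contradictions you derive from its failure cannot occur. Indeed, writing $\alpha_{1},\alpha_{2}$ for the forms vanishing on $I$ and dual to $T_{1},T_{2}$, every nonzero bracket of $\g$ involves at least one factor from $\mm$ (because $I$ is abelian), so ${\rm d}\alpha=\alpha_{1}\wedge\beta_{1}+\alpha_{2}\wedge\beta_{2}+c\,\alpha_{1}\wedge\alpha_{2}$ for every $\alpha$; the cube of such a $2$-form repeats $\alpha_{1}$ or $\alpha_{2}$ in some factor and vanishes. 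Hence the Cartan class is always at most $5$, ``class $\geq 6$'' can never be exhibited, and in particular your claim that $[T_{2},T_{1}]=aT_{1}+V$ with $a\neq 0$ yields $({\rm d}T_{1}^{*})^{3}\neq 0$ is false: one computes ${\rm d}T_{1}^{*}=a\,\alpha_{1}\wedge\alpha_{2}$ (there are no terms $T_{1}^{*}\wedge X^{*}$, since $[T_{1},I]\subseteq I$ has no $T_{1}$-component), and its square already vanishes. The same objection applies verbatim to your arguments for $\operatorname{Im}f=\operatorname{Im}(\operatorname{ad}T_{1})$ and for the linear independence of $X_{1},X_{2}$.

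The half of the hypothesis that actually does the work is the \emph{lower} bound: no nonclosed form may have class $2$ or $3$, i.e., ${\rm d}\alpha\neq 0$ must force $({\rm d}\alpha)^{2}\neq 0$. All three assertions follow from violations of this bound, so your conclusions are correct but the mechanism must be reversed throughout. If $a\neq 0$ above, then $T_{1}^{*}$ is nonclosed of class $2$, hence has a coadjoint orbit of dimension $2$ --- that is the contradiction; so $a=0$, and $V=[T_{2},T_{1}]\in I$ can be absorbed because a form $\omega$ vanishing on $\mm$ and on $[T_{1},I]$ with $\omega(V)\neq 0$ would have ${\rm d}\omega=-\alpha_{2}\wedge(\alpha_{1}+\beta)$ decomposable, again of class at most $3$. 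Likewise a vector $Y\in\operatorname{Im}f\setminus\operatorname{Im}(\operatorname{ad}T_{1})$, or a failure of the independence of $X_{1},X_{2}$ (where the form $\omega$ dual to $Y$ satisfies ${\rm d}\omega=(\alpha_{1}+\mu\,\alpha_{2})\wedge\omega_{X}$), produces a nonzero decomposable ${\rm d}\omega$ and hence a forbidden orbit of dimension $2$. This is also how the paper's setup is organized: the quotient $\widetilde{\g}$ has all its nonclosed forms of class $2$ or $3$ and is controlled by Proposition~\ref{p8}, and the extension datum $f$ is constrained precisely so that no form of class less than $4$ survives on $\g$.
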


Assume now that $\dim \mm =3$. In this case $\g/I$ is abelian, that is $[\mm,\mm] \subset I$. Otherwise, there would exist a linear form on $\mm^*$ of Cartan class equal to~$2$ or~$3$. This implies that for any $\omega \in \mm^*$ we have ${\rm d}\omega=0$ and we obtain, considering the dimension of $[\mm,\mm]$, the following Lie algebras which are nilpotent because the Cartan class is always odd:
 \begin{enumerate}\itemsep=0pt
 \item $\dim [\mm,\mm]=3$:
 \begin{gather}\label{n72}
 \begin{cases}
 {\rm d}\omega_1= {\rm d}\omega_2 = {\rm d}\omega_3= {\rm d}\omega _7=0, \\
{\rm d}\omega_4 = \omega_1 \wedge \omega_2 +\omega_3\wedge \omega_7,\\
{\rm d}\omega_5 = \omega_1 \wedge \omega_3 -\omega_2 \wedge \omega_7,\\
{\rm d}\omega_6 = \omega_2 \wedge \omega_3 +\omega_1 \wedge \omega_7,\\
\end{cases}
\end{gather}
which is of dimension $7$ sometimes called the Kaplan Lie algebra or the generalized Heisenberg algebra,
\begin{gather}\label{n81}
 \begin{cases}
 {\rm d}\omega_1= {\rm d}\omega_2 = {\rm d}\omega_3= {\rm d}\omega _7={\rm d}\omega _8=0, \\
{\rm d}\omega_4 = \omega_1 \wedge \omega_2 +\omega_3\wedge \omega_7,\\
{\rm d}\omega_5 = \omega_1 \wedge \omega_3 +\omega_2 \wedge \omega_8,\\
{\rm d}\omega_6 = \omega_2 \wedge \omega_3 +\omega_1 \wedge \omega_7, \end{cases}
\end{gather}
of dimension $8$,
 \begin{gather}\label{n91}
 \begin{cases}
 {\rm d}\omega_1= {\rm d}\omega_2 = {\rm d}\omega_3= {\rm d}\omega _7={\rm d}\omega _8={\rm d}\omega _9=0, \\
{\rm d}\omega_4 = \omega_1 \wedge \omega_2 +\omega_3\wedge \omega_7,\\
{\rm d}\omega_5 = \omega_1 \wedge \omega_3 +\omega_2 \wedge \omega_8,\\
{\rm d}\omega_6 = \omega_2 \wedge \omega_3 +\omega_1 \wedge \omega_9, \end{cases}
\end{gather}
of dimension $9$.

\item $\dim[\mm,\mm]=2$:
 \begin{gather}\label{n82}
 \begin{cases}
 {\rm d}\omega_1= {\rm d}\omega_2 = {\rm d}\omega_3= {\rm d}\omega _7={\rm d}\omega _8=0, \\
{\rm d}\omega_4 = \omega_1 \wedge \omega_2 +\omega_3\wedge \omega_7,\\
{\rm d}\omega_5 = \omega_1 \wedge \omega_3 +\omega_2 \wedge \omega_7,\\
{\rm d}\omega_6 = \omega_1 \wedge \omega_7 +\omega_2 \wedge \omega_8, \end{cases}
\end{gather}
which is of dimension $8$,
\begin{gather}\label{n92}
 \begin{cases}
 {\rm d}\omega_1= {\rm d}\omega_2 = {\rm d}\omega_3= {\rm d}\omega _7={\rm d}\omega _8={\rm d}\omega_9=0, \\
{\rm d}\omega_4 = \omega_1 \wedge \omega_2 +\omega_3\wedge \omega_7,\\
{\rm d}\omega_5 = \omega_1 \wedge \omega_3 +\omega_2 \wedge \omega_9,\\
{\rm d}\omega_6 = \omega_1 \wedge \omega_7 +\omega_2 \wedge \omega_8, \end{cases}
\end{gather}
which is of dimension $9$.

\item $\dim [\mm,\mm]=1$
\begin{gather}\label{n83}
 \begin{cases}
 {\rm d}\omega_1= {\rm d}\omega_2 = {\rm d}\omega_3= {\rm d}\omega _7={\rm d}\omega _8=0, \\
{\rm d}\omega_4 = \omega_1 \wedge \omega_2 +\omega_3\wedge \omega_7,\\
{\rm d}\omega_5 = \omega_1 \wedge \omega_8 +\omega_2 \wedge \omega_7,\\
{\rm d}\omega_6 = \omega_1 \wedge \omega_7 +\omega_3 \wedge \omega_8, \end{cases}
\end{gather}
which is of dimension $8$.
\item $\dim [\mm,\mm]=0$
\begin{gather}\label{n84}
 \begin{cases}
 {\rm d}\omega_1= {\rm d}\omega_2 = {\rm d}\omega_3= {\rm d}\omega _7={\rm d}\omega _8=0, \\
{\rm d}\omega_4 = \omega_1 \wedge \omega_7 +\omega_2\wedge \omega_8,\\
{\rm d}\omega_5 = \omega_1 \wedge \omega_8 +\omega_3 \wedge \omega_7,\\
{\rm d}\omega_6 = \omega_2 \wedge \omega_7 +\omega_1 \wedge \omega_8, \end{cases}
\end{gather}
which is of dimension $8$.
\end{enumerate}

Assume now that $\dim \mm=4$. In this case $\g/I$ is abelian or isomorphic to the solvable Lie algebra whose Maurer--Cartan equations are
\begin{gather*}
\begin{cases}
{\rm d}\omega_2={\rm d}\omega_4=0,\\
{\rm d}\omega_1=\omega_1 \wedge \omega_2+ \omega_3 \wedge \omega_4, \\
{\rm d}\omega_3=\omega_3 \wedge \omega_2+a \omega_1 \wedge \omega_4
\end{cases}
\end{gather*}
with $a \neq 0$. Let us assume that $\g/I$ is not abelian. Let $\{X_1,\dots,X_n\}$ be a basis of $\g$ such that $\{X_1,\dots,X_4\}$ is the basis of $\mm$ dual of $\{ \omega_1,\dots,\omega_4\}$ and $\{X_5,\dots,X_n\}$ a basis of $I$. Since $I$ is maximal, then $[X_1,I]$ and $[X_3,I]$ are not trivial. There exists a vector of~$I$, for example,~$X_5$ such that $[X_1,X_5] \neq 0$. Let us put $[X_1,X_5]=Y$ with $Y \in I$ and let be $\omega$ its dual form. Then
\begin{gather*}{\rm d}\omega= \omega_1 \wedge \omega_5 + \theta\end{gather*}
with $\omega_1 \wedge \omega_5 \wedge \theta \neq 0 $ and $ \omega_3 \wedge \omega_4 \wedge \theta=\omega_3 \wedge \omega_2 \wedge \theta=0$ if not there exists a linear form of class greater that $5$. This implies that they there exists $\omega_6$ independent with $\omega_5$
\begin{gather*}{\rm d}\omega= \omega_1 \wedge \omega_5 +b \omega_3\wedge \omega_6 \end{gather*}
with $b \neq 0$. Now the Jacobi conditions which are equivalent to ${\rm d}({\rm d}\omega)=0$ implies that we cannot have $\omega=\omega_5$ and $\omega=\omega_6$. Then we put $\omega=\omega_7$. This implies
\begin{gather*}{\rm d}\omega_7=\omega_1\wedge \omega_5+b_2 \omega_3 \wedge \omega_6, \qquad\!\! {\rm d}\omega_5=\omega_2\wedge \omega_5+b_3\omega_4 \wedge \omega_6, \qquad\!\! {\rm d}\omega_6=\omega_4\wedge \omega_5+b_4 \omega_2 \wedge \omega_6.\end{gather*}
We deduce that $\g$ is isomorphic to the Lie algebra whose Maurer--Cartan equations are
\begin{gather}\label{g9}
\begin{cases}
{\rm d}\omega_2={\rm d}\omega_4=0,\\
{\rm d}\omega_1=\omega_1 \wedge \omega_2+ \omega_3 \wedge \omega_4 , \\
{\rm d}\omega_3=\omega_3 \wedge \omega_2+a_1 \omega_1 \wedge \omega_4 , \\
 {\rm d}\omega_5=\omega_2\wedge \omega_5+a_2\omega_4 \wedge \omega_6, \\
 {\rm d}\omega_6=\omega_4\wedge \omega_5+a_3 \omega_2 \wedge \omega_6, \\
 {\rm d}\omega_7=\omega_1\wedge \omega_5+a_4 \omega_3 \wedge \omega_6
\end{cases}
\end{gather}
with $a_1a_2a_3a_4 \neq 0$.

If $\dim \mm \geq 5$, then $\dim A(\omega) =4$ or $0$ and the codimension of $I$ is greater than $n-4$. Then $\dim \mm\leq 4$.
\begin{Proposition}
Let $\g$ be a Lie algebra whose coadjoint orbit of any nonclosed linear form is of dimension $4$. Then
\begin{enumerate}\itemsep=0pt
 \item[$1)$] $\g$ is isomorphic to \eqref{n72}, \eqref{n81}, \eqref{n91}, \eqref{n82}, \eqref{n92}, \eqref{n83}, \eqref{n84}, \eqref{g9},
 \item[$2)$] or $\g$ admits a decomposition $\g=\mm \oplus I$ where~$I$ is a codimension~$2$ abelian ideal.
 \end{enumerate}
\end{Proposition}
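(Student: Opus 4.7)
The plan is to assemble the proposition as a synthesis of the case analysis already prepared in the section, organizing it by $\dim\mm$ where $\g=\mm\oplus I$ as vector spaces and $I$ is a maximal abelian ideal. First, I would invoke Lemma~\ref{o4} to conclude that $\g$ is solvable, hence admits a nontrivial abelian ideal $I$. Since the Cartan class of any nonclosed form is $4$ or $5$, Corollary (of the previous subsection) forbids the existence of any form of class $\leq 3$, which by the results of Section~\ref{section3} forces $I$ to have codimension at least~$2$. Pick a vector complement $\mm$ so that $\g=\mm\oplus I$ with $\dim\mm\geq 2$.

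Next I would bound $\dim\mm$ from above by~$4$: for any nonclosed $\omega\in\g^*$, $\dim A({\rm d}\omega)\in\{n-4,n\}$ since $\operatorname{cl}(\omega)\in\{4,5\}$, and $A({\rm d}\omega)$ contains~$I$; picking $\omega$ of maximal class one gets $\dim I\geq n-4$, i.e., $\dim\mm\leq 4$. Then I would split into the three cases $\dim\mm\in\{2,3,4\}$. The case $\dim\mm=2$ directly yields conclusion~(2) (and this is precisely what Proposition~\ref{c4} further describes structurally). For $\dim\mm=4$, I would observe that $\g/I$ is either abelian, again giving~(2), or, by the list from Section~\ref{section3} applied to the quotient of any four-dimensional Lie algebra all of whose forms have class $\leq 3$, must be isomorphic to the specific four-dimensional solvable algebra exhibited above; then the structure equations~\eqref{g9} are forced by writing out the brackets $[X_1,X_5]=Y\in I$, expanding ${\rm d}\omega$ in the Maurer--Cartan basis, killing by hand any wedge that would push $\operatorname{cl}$ past~$5$, and closing up with $d^2=0$.

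The bulk of the work is the case $\dim\mm=3$. Here I would first argue that $\g/I$ must be abelian, for otherwise a linear form on~$\mm^*$ would descend to a class-$2$ or~$3$ form on~$\g^*$, contradicting the hypothesis. Hence $[\mm,\mm]\subset I$, and all Maurer--Cartan equations ${\rm d}\omega_i=0$ for $\omega_i\in\mm^*$. The forms $\omega_4,\omega_5,\omega_6$ dual to a basis of $[\mm,\mm]$ (padded if $\dim[\mm,\mm]<3$) then satisfy ${\rm d}\omega_j=\sum \omega_a\wedge\omega_b$ with at most one index in~$\mm^*$ per wedge, by the class constraint. I would then subdivide by $\dim[\mm,\mm]\in\{0,1,2,3\}$, normalize the structure constants by a change of basis in~$I$ and~$\mm$, and use the Jacobi identity (${\rm d}^2=0$) together with the fact that each $\omega_j$ itself has $\operatorname{cl}(\omega_j)\leq 5$ to rule out all configurations except those listed in \eqref{n72}--\eqref{n84}. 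Odd-class parity forces nilpotency, explaining why these algebras appear nilpotent.

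The main obstacle will be carrying out this enumeration cleanly: each $\dim[\mm,\mm]$ subcase requires showing that the ``mixed'' wedges $\omega_a\wedge\omega_b$ with $a\in\mm^*$ and $b\in I^*$ can be simultaneously normalized, and that the surviving free parameters are either fixed by Jacobi or absorbed by a change of basis. I expect the most delicate step to be the $\dim[\mm,\mm]=2$ case, where the coupling between ${\rm d}\omega_5$ and ${\rm d}\omega_6$ leaves several a priori possibilities that must be winnowed down to the two forms \eqref{n82} and~\eqref{n92}. Once each dimension is handled, concatenating the cases yields exactly the dichotomy in the statement.
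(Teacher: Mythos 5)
Your plan follows the paper's route exactly: solvability via Lemma~\ref{o4}, a maximal abelian ideal $I$ of codimension at least~$2$, and a case split on $\dim\mm\in\{2,3,4\}$, with $\dim\mm=2$ giving conclusion~(2), $\dim\mm=3$ giving the nilpotent list, and $\dim\mm=4$ giving~\eqref{g9}. Two steps, however, are genuinely broken. The first is your exclusion of $\dim\mm\geq 5$. The containment $I\subseteq A({\rm d}\omega)$ is false for a general nonclosed $\omega$: an element $X\in I$ lies in $A({\rm d}\omega)$ only if $\omega$ annihilates $[X,\g]$, and $[X,\g]$ sits in $I$, not in $\ker\omega$. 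Inside the very class of algebras under discussion, $\h_5$ has a maximal abelian ideal of dimension~$3$ while $A({\rm d}\omega_5)$ is the one-dimensional center; and in the algebra~\eqref{g9} one checks that $A({\rm d}\omega_7)=\K\{X_2,X_4,X_7\}$, which neither contains nor is contained in the maximal abelian ideal $\K\{X_5,X_6,X_7\}$. Worse, even if the containment held it would give $\dim I\leq\dim A({\rm d}\omega)=n-4$, i.e., $\dim\mm\geq 4$ --- the opposite of the bound you need, and incompatible with your own case $\dim\mm=2$. The correct version of the observation is that for $\omega$ vanishing on $I$ one has $I\subseteq\mathcal{C}(\omega)$, hence $\operatorname{cl}(\omega)\leq\dim\mm$; that is what forces $\g/I$ to be abelian when $\dim\mm=3$, but it does not cap $\dim\mm$. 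You need a separate argument that five independent directions of $\mm$ acting on $I$ would produce a linear form of class at least~$6$.

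The second problem is the subcase $\dim\mm=4$ with $\g/I$ abelian, which you dispatch with ``again giving~(2)''. Conclusion~(2) requires an abelian ideal of codimension~$2$, whereas here $I$ has codimension~$4$; nothing in your argument produces a larger abelian ideal or places $\g$ in list~(1), so this subcase is unaccounted for --- it must be shown to be empty, reduced to the listed algebras, or enumerated as in the $\dim\mm=3$ case. Apart from these two points the outline is sound and matches the paper, including the reduction to Proposition~\ref{p8} via the quotient by $\K\{T_2\}$, the dichotomy on $\dim[\mm,\mm]$, and the derivation of~\eqref{g9}; but the case-by-case normalization you defer is where most of the actual work lies, and as written you are only sketching it.
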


\subsection[Classification when $\operatorname{codim} I=2$]{Classification when $\boldsymbol{\operatorname{codim} I=2}$}
These Lie algebras are described in Proposition \ref{c4}. It remains to classify them up to isomorphism. Let $\{T_1,T_2\}$ be a basis of $\mm$ and $\widetilde{\g}=\g/\K\{T_2\}$.

$\bullet$ $\dim \g =4$. Then $\dim \widetilde{\g}=3$ and it is isomorphic to one of the two algebras whose Lie brackets are given by
\begin{itemize}\itemsep=0pt
 \item $[T_1,X_1]=X_2$,
 \item or $[T_1,X_1]=X_1$, $[T_1,X_2]=aX_2$, $a \neq 0$.
\end{itemize}
In the first case, it is easy to see that we cannot find derivation of $\widetilde{\g}$ satisfying Proposition~\ref{c4}. In the second case the matrix of $f$ in the basis $\{T_1,X_1,X_2\}$ is
\begin{gather*}\begin{pmatrix}
 0 & 0 & 0 \\
 0 & b & c \\
 0 & d & e
\end{pmatrix}.
\end{gather*}
We have no solution if $a \neq 1$. If $a=1$ then $f$ satisfies
\begin{gather*}(e-b)^2+4cd <0.\end{gather*}
In particular for $b=e=0$ and $c=1$ we obtain
\begin{Proposition}
Any $4$-dimensional Lie algebra whose coadjoint orbits of nonclosed linear forms are of dimension $4$ is isomorphic to the Lie algebra $\g_4(\lambda)$ whose Maurer--Cartan equations are
\begin{gather*}
\begin{cases}
{\rm d}\alpha_1={\rm d}\alpha_2=0,\\
{\rm d}\omega_1=\alpha_1 \wedge \omega_1 +\alpha_2 \wedge \omega_2, \\
{\rm d}\omega_2= \alpha_1 \wedge \omega_2+ \lambda\alpha_2 \wedge \omega_1
\end{cases}
\end{gather*}
with $\lambda<0$.
\end{Proposition}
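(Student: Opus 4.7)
Applying Proposition~\ref{c4} in dimension four, $\g = \mm \oplus I$ with $I$ a $2$-dimensional abelian ideal, and $\g$ is obtained as a one-dimensional extension by a derivation $f$ of the quotient $\widetilde{\g} = \g/\K\{T_2\}$ satisfying $f(T_1)=0$ and $\operatorname{Im} f = \operatorname{Im}(\operatorname{ad} T_1)$. The plan is to enumerate the three-dimensional candidates for $\widetilde{\g}$, determine which derivations $f$ produce a valid extension whose nonclosed orbits are all of dimension four, and then normalize the result under $\operatorname{GL}(I)$. The abelian model is excluded at once, since $\operatorname{Im}(\operatorname{ad} T_1)=0$ would force $f=0$ and $\g$ abelian. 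Among the non-abelian models with a two-dimensional abelian ideal $\K\{X_1,X_2\}$, the excerpt already isolates the two possibilities: the filiform bracket $[T_1,X_1]=X_2$ and the diagonal bracket $[T_1,X_1]=X_1$, $[T_1,X_2]=aX_2$ with $a\ne 0$.

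In the filiform case $\operatorname{Im}(\operatorname{ad} T_1)=\K\{X_2\}$ is one-dimensional, and the identity $f([T_1,X_1])=[T_1,f(X_1)]$ forces $f(X_i)\in\K\{X_2\}$, so no pair $(X_1',X_2')$ of the kind required by Proposition~\ref{c4} exists. In the diagonal case I write $f(X_1)=bX_1+dX_2$ and $f(X_2)=cX_1+eX_2$; the derivation identity applied to $[T_1,X_i]$ produces $d(1-a)=c(a-1)=0$. If $a\ne 1$ then $f$ is diagonal, and the linear-independence condition of Proposition~\ref{c4} fails for $Y=X_1$, so $a=1$. The matrix $M=\bigl(\begin{smallmatrix}b&c\\ d&e\end{smallmatrix}\bigr)$ must then be invertible, but is otherwise unconstrained by the derivation axioms.

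To impose that every nonclosed form has class four, I parametrise a generic nonclosed form as $\omega=u\omega_1+v\omega_2$ modulo the closed generators $\alpha_1,\alpha_2$, expand $({\rm d}\omega)^2$, and observe that the resulting expression vanishes identically if and only if the matrix $u\cdot\mathrm{Id}+vM$ has no real eigenvalue for every $(u,v)\ne(0,0)$, equivalently $(e-b)^2+4cd<0$. Finally, since $\operatorname{ad} T_1=\mathrm{id}_I$, the full group $\operatorname{GL}(I)$ centralizes $\operatorname{ad} T_1$, so conjugation combined with a rescaling of $T_2$ brings $M$ to the rational canonical form $\bigl(\begin{smallmatrix}0&1\\ \lambda &0\end{smallmatrix}\bigr)$ with $\lambda<0$, yielding exactly the Maurer--Cartan equations of $\g_4(\lambda)$. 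The main obstacle I expect is the class-four computation translating the maximality condition on all nonclosed $\omega$ into the negative-discriminant inequality; the derivation analysis and the final $\operatorname{GL}(I)$-normalization are routine linear algebra.
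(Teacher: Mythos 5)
Your proposal follows essentially the same route as the paper's (much terser) argument: reduce via Proposition~\ref{c4} to a derivation extension of a three-dimensional quotient, discard the filiform case, force $a=1$ in the diagonal case, translate the class-four requirement into the discriminant inequality $(e-b)^2+4cd<0$, and normalize the matrix of $f$. Two minor slips that do not affect the conclusion: the correct condition is that $M$ itself has no real eigenvalue (your phrase ``$u\cdot\mathrm{Id}+vM$ has no real eigenvalue for every $(u,v)\neq(0,0)$'' fails literally at $v=0$; what one needs is that no $(u,v)$ is a left eigenvector of $M$), and the final normalization to the traceless companion form $\bigl(\begin{smallmatrix}0&1\\ \lambda&0\end{smallmatrix}\bigr)$ also requires replacing $T_2$ by $T_2-\tfrac{1}{2}(\operatorname{tr}M)T_1$, not only conjugation and rescaling.
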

This Lie algebra corresponds to the Lie algebra $M_a^{10}$ in the classification of W.A.~de Graaf (see \cite{Gr, GozeEllipse}).

$\bullet$ $\dim \g=5$. Let us put $\widetilde{\g}=\K\{T_1\} \oplus I$. Let $h_1$ be the restriction of $\operatorname{ad} T_1$ to $I$. It is an endomorphism of~$I$ and since $\dim I=3$, it admits an eigenvalues~$\lambda$. Assume that $\lambda \neq 0$ and let~$X_1$ be an associated eigenvector. Then, since~$f$ is a derivation commuting with $\operatorname{ad} T_1$,
\begin{gather*}[T_1,f(X_1)]=f([T_1,X_1])=\lambda f(X_1).\end{gather*}
Then $f(X_1)$ is also an eigenvector associated with $\lambda$. By hypothesis $X_1$ and $f(X_1)$ are independent and $\lambda$ is a root of order $2$. Thus $h_1$ is semisimple. Let $\lambda_2$ be the third eigenvalue. If $X_3$ is an associated eigenvector, as above $f(X_3)$ is also an eigenvector and~$\lambda_2$ is a root of order $2$ except if $\lambda_2=\lambda_1$. Then
\begin{Lemma}If $\dim I$ is odd, then if the restriction $h_1$ of $\operatorname{ad} T_1$ to $I$ admits a nonzero eigenvalue, we have $h_1=\lambda \,{\rm Id}$.
\end{Lemma}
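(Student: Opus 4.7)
The plan is to exploit the commutativity of $f$ and $h_1$ coming from Proposition~\ref{c4} together with the odd-dimensional parity of $I$.

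First I observe that, since $f$ is a derivation of $\widetilde{\g}$ with $f(T_1)=0$, for every $X\in I$
\begin{gather*}
f(h_1(X)) = f([T_1,X]) = [f(T_1),X] + [T_1,f(X)] = h_1(f(X)),
\end{gather*}
so $f$ commutes with $h_1$ on $I$; in particular, $f$ preserves every (generalized) eigenspace of~$h_1$.

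Next, let $\lambda\neq 0$ be an eigenvalue of $h_1$ and let $X_1\in I$ be a corresponding eigenvector. Commutativity places $f(X_1)$ in the $\lambda$-eigenspace $E_\lambda$. I would then apply the hypothesis of Proposition~\ref{c4} to $Y=\lambda X_1 \in \operatorname{Im}(\operatorname{ad} T_1)=\operatorname{Im}(f)$: it produces linearly independent $A,B\in I$ with $[T_1,A]=f(B)=\lambda X_1$. Tracking these simultaneous preimages through the generalized eigenspace decomposition of $I_{\C}$ (stable under $f$), I expect to conclude that $X_1$ and $f(X_1)$ are linearly independent, hence $\dim E_\lambda\geq 2$. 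Thus every nonzero eigenvalue of $h_1$ has multiplicity at least~$2$.

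A parity argument then finishes the proof. If $h_1$ possessed two distinct nonzero eigenvalues, their combined multiplicities would already account for at least $4$ dimensions, and since $\dim I$ is odd the remaining dimensions would lie in $\ker h_1$, forcing a nonzero $X_0\in I$ with $[T_1,X_0]=0$. Such an $X_0$ together with the abelianity of~$I$ and indecomposability of $\g$ produces a nonclosed form $\omega\in\g^*$ of Cartan class at most~$3$, contradicting the running assumption that every nonclosed linear form of $\g$ has class $4$ or $5$. Therefore $\lambda$ is the unique eigenvalue of $h_1$, and one last application of the independence argument inside the generalized $\lambda$-eigenspace excludes Jordan blocks, yielding $h_1=\lambda\operatorname{Id}$.

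The main obstacle is the middle step: showing that $X_1$ and $f(X_1)$ are linearly independent really requires the double preimage condition in Proposition~\ref{c4} and is not a consequence of the commutativity of $f$ and $h_1$ alone; one has to compare the preimage of $\lambda X_1$ under $h_1$ (namely $X_1$ itself, up to $\ker h_1$) with the preimage under $f$ component-wise in the eigenspace decomposition. Once this independence is secured, the parity count and the standard Cartan-class restriction on forms vanishing on a large abelian ideal finish the argument routinely.
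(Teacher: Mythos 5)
Your skeleton is the same as the paper's: you derive the commutation $f\circ h_1=h_1\circ f$ from $f$ being a derivation with $f(T_1)=0$, observe that $f$ preserves the eigenspaces of $h_1$, and argue that a nonzero eigenvalue must have multiplicity at least $2$ because $X_1$ and $f(X_1)$ are independent. The paper finishes the case $\dim I=3$ by a bare dimension count (a second distinct nonzero eigenvalue would also need multiplicity at least $2$, impossible in dimension $3$, so all eigenvalues coincide and $h_1=\lambda\,{\rm Id}$) and extends to general odd dimension ``by induction''. The difficulty is that your central step is announced rather than proved: you write that you ``expect to conclude'' that $X_1$ and $f(X_1)$ are independent, and you yourself identify this as the main obstacle. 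That step carries all the content of the lemma. The hypothesis of Proposition~\ref{c4} applied to $Y=\lambda X_1$ only yields independent vectors $A,B$ with $[T_1,A]=f(B)=\lambda X_1$, so $A-X_1\in\ker h_1$ and $f(B)=\lambda X_1$; passing from this to the independence of $X_1$ and $f(X_1)$ requires an actual argument that you do not supply. (The paper also only asserts it ``by hypothesis'', but a blind proof cannot lean on that.)

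Two further links in your chain are not secured. First, your substitute for the paper's dimension count --- two distinct nonzero eigenvalues force a nonzero $X_0\in\ker h_1$, and such an $X_0$ yields a nonclosed form of Cartan class at most $3$ --- is incomplete: a form dual to $X_0$ can still have class $4$ or $5$ through the bracket $[T_2,X_0]=f(X_0)$, so you must first show $f(X_0)=0$. The paper gets this from the requirement that $f$ and $\operatorname{ad}T_1$ have the same image (hence, in its setting, the same kernel), and that input is missing from your argument; note also that, if valid, the same reasoning applies already to a single nonzero eigenvalue of multiplicity $2<\dim I$, so it proves more than you state and you should say which statement you are actually establishing. Second, ``one last application of the independence argument inside the generalized $\lambda$-eigenspace excludes Jordan blocks'' is an assertion: your pairing argument concerns genuine eigenvectors, and extending it to generalized eigenvectors is exactly what is needed to pass from ``$\lambda$ is the only eigenvalue'' to $h_1=\lambda\,{\rm Id}$. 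Until the independence step and these two points are written out, the proposal is a plan rather than a proof.
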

\begin{proof} We have proved this lemma for $\dim I =3$. By induction we find the general case.\end{proof}

We assume that $h_1=\lambda\,{\rm Id}$ with $\lambda \neq 0$. The derivation~$f$ of~$I$ is of rank~$3$ because~$f$ and~$h_1$ have the same rank by hypothesis. Since~$f$ is an endomorphism in a~$3$-dimensional space, it admits a~non-zero eigenvalue~$\mu$. Let $Y$ be an associated eigenvector, then
\begin{gather*}f(Y)=\mu Y, \qquad h_1(Y)=\lambda Y.\end{gather*}
This implies that there exists $Y$ such that $f(Y)$ and $h_1(Y)$ are not linearly independent. This is a contradiction. We deduce that $\lambda=0$.

As consequence, all eigenvalues of $h_1$ are null and $h_1$ is a nilpotent operator. In particular $\dim \operatorname{Im}(h_1) \leq 2$. If this rank is equal to $2$, the kernel is of dimension~$1$. Let $X_1$ be a generator of this kernel. Then $[T_1,X_1]=0$ this implies that $f(X_1)=0$ because $f$ and $h_1$ have the same image. We deduce that the subspace of $\g$ generated by $X_1$ is an abelian ideal and $\g$ is not indecomposable. Then $\dim \operatorname{Im}(h_1)=1$ and $\g$ is the $5$-dimensional Heisenberg algebra.

\begin{Proposition}Any $5$-dimensional Lie algebra whose coadjoint orbits of nonclosed linear forms are of dimension $4$ is isomorphic to the $5$-dimensional Heisenberg algebra whose Maurer--Cartan equations are
\begin{gather*}
\begin{cases}
 {\rm d}\omega_1={\rm d}\omega_2={\rm d}\omega_3={\rm d}\omega_4=0, \\
 {\rm d}\omega _5= \omega_1 \wedge \omega_2 +\omega_3 \wedge \omega_4.
\end{cases}
\end{gather*}
\end{Proposition}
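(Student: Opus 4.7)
The plan is to refine the analysis preceding the statement. By Proposition~\ref{c4}, any such algebra decomposes as $\g = \mm \oplus I$ with $\dim \mm = 2$ and $\dim I = 3$, and the brackets are controlled by $h_1 = \operatorname{ad} T_1|_I$ together with a derivation $f = \operatorname{ad} T_2|_I$ commuting with $h_1$, satisfying $\operatorname{Im} f = \operatorname{Im} h_1$ plus the compatibility that for every $Y \in \operatorname{Im} h_1$ there exist linearly independent $V_1, V_2 \in I$ with $h_1(V_1) = f(V_2) = Y$.

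First I would show $h_1$ is nilpotent of rank exactly one. Nonzero eigenvalues are excluded by the preceding lemma combined with the independence requirement: if $h_1 = \lambda \operatorname{Id}$, every eigenvector of $f$ is simultaneously an eigenvector of $h_1$, so $h_1(V)$ and $f(V)$ are collinear, contradicting the desired independence. Rank $0$ would force $f = 0$, making $\g$ abelian hence decomposable. Rank $2$ forces $h_1$ to be a single nilpotent Jordan block on $I$; in a Jordan basis $\{X_1, X_2, X_3\}$ with $h_1(X_3) = X_2$, $h_1(X_2) = X_1$, $h_1(X_1) = 0$, writing $f = b h_1 + c h_1^2$ (the only centralizer elements of rank $2$), a direct computation gives $d\xi_2 = -(\tau_1 + b\tau_2) \wedge \xi_3$, so $\xi_2 \wedge d\xi_2 \neq 0$ while $(d\xi_2)^2 = 0$, yielding $\operatorname{cl}(\xi_2) = 3$, which contradicts the hypothesis.

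Once $\operatorname{rank} h_1 = 1$, we have $h_1^2 = 0$, and in a basis $\{X_1, X_2, X_3\}$ of $I$ with $h_1(X_1) = X_2$ and $h_1(X_2) = h_1(X_3) = 0$, the relations $[f, h_1] = 0$ and $\operatorname{Im} f = \K X_2$ force $f(X_1) = a_1 X_2$, $f(X_2) = 0$, $f(X_3) = a_3 X_2$ with $(a_1, a_3) \neq (0, 0)$. Indecomposability demands $a_3 \neq 0$, for otherwise $\K X_3$ is a central ideal whose complement $\K\{T_1, T_2, X_1, X_2\}$ is a subalgebra, splitting $\g$. Rescaling $X_3$ so that $a_3 = 1$ and setting $T_2' = T_2 - a_1 T_1$ reduces the nonzero brackets to $[T_1, X_1] = X_2$ and $[T_2', X_3] = X_2$, exhibiting $\g$ as the $5$-dimensional Heisenberg algebra.

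The main obstacle is the clean exclusion of the rank-$2$ case; the text's indecomposability argument needs filling in (the central element $X_1$ actually lies in $[\g,\g]$, so the naive splitting fails), and the Cartan class computation for $\xi_2$ sketched above is the most robust route to a contradiction.
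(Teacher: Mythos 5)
Your proposal is correct and follows the same overall strategy as the paper: restrict $\operatorname{ad} T_1$ to the $3$-dimensional abelian ideal $I$, rule out nonzero eigenvalues of $h_1$ via the commuting derivation $f$ (your argument here is essentially verbatim the paper's), conclude $h_1$ is nilpotent, and then split on $\operatorname{rank} h_1 \in \{1,2\}$. Where you genuinely diverge is the exclusion of $\operatorname{rank} h_1 = 2$. The paper argues that the one-dimensional kernel of $h_1$ generates an abelian ideal and hence $\g$ is decomposable; as you observe, this is shaky, because when $h_1$ is a full Jordan block that kernel vector is $h_1^2(X_3)$, hence lies in $\mathcal{D}(\g)$ and cannot be split off as a direct factor. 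Your replacement --- writing $f = b h_1 + c h_1^2$ with $b \neq 0$ as the only rank-$2$ elements of the centralizer of a regular nilpotent, and computing ${\rm d}\xi_2 = -(\tau_1 + b\tau_2)\wedge\xi_3 + (\cdots)\,\tau_1\wedge\tau_2$, whence $({\rm d}\xi_2)^2 = 0$ and $\xi_2\wedge{\rm d}\xi_2 \neq 0$, so $\operatorname{cl}(\xi_2)=3$ --- is a direct contradiction with the hypothesis that every nonclosed form has class $4$ or $5$, and it is the more robust route. Your rank-$1$ endgame is also carried out in more detail than the paper's one-line conclusion. One small point you (and the paper) leave implicit: the bracket $[T_1,T_2]$ must also be controlled. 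A short check shows Jacobi forces $[T_1,T_2]\in I$, and any component on $X_1$ or $X_3$ would make the corresponding dual form have Cartan class $2$, so $[T_1,T_2]\in\K X_2$ and can be absorbed; with that remark your normalization to the Heisenberg brackets is complete.
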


$\bullet$ $\dim \g \geq 6$. {\it Solvable non-nilpotent case.} Since the Cartan class of any linear form on a~solvable Lie algebra is odd if and only if this Lie algebra is nilpotent, then if we assume that~$\g$ is solvable non-nilpotent, there exists a linear form of Cartan class equal to $4$. We assume also that $\g=\mm \oplus I$ with $\dim \mm =2$ and satisfying Proposition \ref{c4}. The determination of these Lie algebras is similar to (\ref{g9}) without the hypothesis $[X_1,I] \neq 0$ and $[X_3,I] \neq 0$. In this case $X_1$ and $X_3$ are also in $I$. We deduce immediately:
\begin{Proposition}Let $\g$ be a solvable non-nilpotent Lie algebra $\g=\mm \oplus I$ with $\dim \mm=2$, $I$ is an abelian ideal and whose dimensions of nontrivial coadjoint orbits are equal to~$4$. Then $\g$ is isomorphic to the following Lie algebra whose Maurer--Cartan equations are
\begin{gather*}%\label{s4}
\begin{cases}
{\rm d}\omega_2={\rm d}\omega_4=0,\\
{\rm d}\omega_{2l+2+3j}= {\rm d}\omega_{2l+3+3j}=0, \qquad j=0, \dots, s, \\
{\rm d}\omega_1=\omega_1 \wedge \omega_2+ \omega_3 \wedge \omega_4 , \\
{\rm d}\omega_3=\omega_3 \wedge \omega_2+a_1 \omega_1 \wedge \omega_4, \\
 {\rm d}\omega_5=\omega_2\wedge \omega_5+a_2\omega_4 \wedge \omega_6, \\
 {\rm d}\omega_6=\omega_2\wedge \omega_6+a_3 \omega_4 \wedge \omega_5, \\
 \cdots\cdots\cdots\cdots\cdots\cdots\cdots\cdots\cdots\cdots\cdots \\
 {\rm d}\omega_{2l-1}=\omega_2\wedge \omega_{2l-1}+a_{2l-4}\omega_4 \wedge \omega_{2l}, \\
 {\rm d}\omega_{2l}=\omega_2 \wedge \omega_{2l} +a_{2l-3}\omega_4\wedge \omega_{2l-1}, \\
 {\rm d}\omega_{2l+1+3j}=\omega_2\wedge \omega_{2l+2+3j}+\omega_4 \wedge \omega_{2l+3+3j}, \qquad j=0, \dots, s
 \end{cases}
\end{gather*}
with $a_1 \cdots a_{2l-3} \neq 0$.
\end{Proposition}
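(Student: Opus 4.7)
The plan is to adapt the derivation that produced the system (\ref{g9}), dropping the extra hypotheses $[X_1,I]\neq 0$ and $[X_3,I]\neq 0$ used there; since here $X_1$ and $X_3$ must already lie in $I$, the two-dimensional complement $\mm=\K\{T_1,T_2\}$ plays only the role of the commuting pair of derivations $h_i=\operatorname{ad} T_i|_I$ of the abelian ideal $I$. First I would observe that $[\mm,\mm]\subset I$, because otherwise there would be a nonzero form on $\mm^*$ of class $2$ or $3$, as in the argument preceding (\ref{n72})--(\ref{n84}); hence the dual forms $\omega_2=T_1^*$ and $\omega_4=T_2^*$ are closed. Solvability together with the assumption that $\g$ is not nilpotent forces, via the characterization that nilpotency is equivalent to all Cartan classes being odd, the existence of some linear form of Cartan class $4$, so at least one of $h_1,h_2$ has a nonzero semisimple part. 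After normalizing $\mm$, this produces a form $\omega_1$ dual to some $X_1\in I$ with ${\rm d}\omega_1=\omega_1\wedge\omega_2+\omega_3\wedge\omega_4$, exactly as in (\ref{g9}).

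Next I would expand every other linear form $\beta\in I^*$ as ${\rm d}\beta=\omega_2\wedge\beta_1+\omega_4\wedge\beta_2$ with $\beta_1,\beta_2\in I^*$, which is the most general differential compatible with $I$ abelian and $\omega_2,\omega_4$ closed. The bound $\operatorname{cl}(\beta)\leq 5$, i.e.\ the vanishing of $({\rm d}\beta)^3$ and of $\beta\wedge({\rm d}\beta)^2$ whenever $({\rm d}\beta)^2\neq 0$, forces the pair $(\beta_1,\beta_2)$ to live in a single two-dimensional $(h_1,h_2)$-invariant plane spanned by $\beta$ and a companion, producing the "blocks" $(\omega_{2l-1},\omega_{2l})$ governed by coefficients $a_{2l-4},a_{2l-3}$. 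The "triples" $(\omega_{2l+1+3j},\omega_{2l+2+3j},\omega_{2l+3+3j})$ appear when $h_1,h_2$ act trivially on the second and third member of the block, and the integrability ${\rm d}^2\omega_{2l+1+3j}=0$ then fixes the shape ${\rm d}\omega_{2l+1+3j}=\omega_2\wedge\omega_{2l+2+3j}+\omega_4\wedge\omega_{2l+3+3j}$.

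The Jacobi identity, written as ${\rm d}^2=0$ applied to each Maurer--Cartan equation, then rules out mixed terms between different blocks and fixes ${\rm d}\omega_3=\omega_3\wedge\omega_2+a_1\omega_1\wedge\omega_4$ with $a_1\neq 0$ (non-vanishing following from indecomposability and the maximality of $I$). A final change of basis acting by an automorphism of $\mm$ and by rescaling each two-dimensional $h_i$-stable plane normalizes all leading coefficients in the "diagonal" positions to $1$, leaving the scalars $a_1,\dots,a_{2l-3}$ as the only surviving continuous parameters.

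The main obstacle will be the bookkeeping of the Jacobi relations between distinct blocks: specifically, showing that the bracket of the "$X_1,X_3$" plane with any vector of a higher block must vanish, for otherwise one produces a linear form of Cartan class $\geq 6$, contradicting the hypothesis $\dim\mathcal{O}(\alpha)=4$. Once this compartmentalization is established, the system assembles exactly into the proposition's list, and verifying that each such system indeed realizes $\dim\mathcal{O}(\alpha)\in\{0,4\}$ is a direct computation of $({\rm d}\alpha)^2$ and $({\rm d}\alpha)^3$ on a generic $\alpha$.
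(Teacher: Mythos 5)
Your proposal follows essentially the same route as the paper, which disposes of this case in a single sentence by declaring the determination ``similar to'' the derivation of the family~(\ref{g9}) once $X_1$ and $X_3$ are moved into $I$; your block-by-block analysis of ${\rm d}\beta=\omega_2\wedge\beta_1+\omega_4\wedge\beta_2$ under the constraint $\operatorname{cl}(\beta)\leq 5$, followed by the Jacobi and normalization steps, is exactly the intended elaboration of that remark. If anything your sketch is more detailed than the paper's; the only point you should still address is why the $\omega_2\wedge\omega_4$ component of ${\rm d}\beta$ (equivalently, the bracket $[T_1,T_2]\in I$) can be taken to vanish, since your ansatz with $\beta_1,\beta_2\in I^*$ silently excludes it.
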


$\bullet$ $\dim \g \geq 6$. {\it Nilpotent case.} Let us describe nilpotent algebras of type $\mm \oplus I$ where $I$ is a~maximal abelian ideal with $\dim A({\rm d}\omega)= n-4$ or $n$ for any $\omega \in \g^*$. Let us recall also that the Cartan class of any nontrivial linear form is odd then here equal to~$5$. In the previous examples, we have seen that for the $5$-dimensional case, we have obtained only the Heisenberg algebra. Before to study the general case, we begin by a description of an interesting example. Let us consider the following nilpotent Lie algebra, denoted by $\h(p,2)$ given by
\begin{gather*}
\begin{cases}
 {\rm d}\omega_1=\alpha_1 \wedge \beta_1+ \alpha_2 \wedge \beta_2, \\
 {\rm d}\omega_2=\alpha_1 \wedge \beta_3+ \alpha_2 \wedge \beta_4, \\
 \cdots\cdots\cdots\cdots\cdots\cdots\cdots\cdots\cdots \\
 {\rm d}\omega_p=\alpha_1 \wedge \beta_{2p-1}+ \alpha_2 \wedge \beta_{2p}, \\
 {\rm d}\alpha_1={\rm d}\alpha_2={\rm d}\beta_i=0,\qquad i=1,\dots, 2p.
\end{cases}
\end{gather*}
This Lie algebra is nilpotent of dimension $3p+2$ and it has been introduced in~\cite{GH} in the study of Pfaffian system of rank greater than $1$ and of maximal class.
\begin{Proposition}For any nonclosed linear form on $\h(p,2)$, the dimension of the coadjoint orbit is equal to $4$.
\end{Proposition}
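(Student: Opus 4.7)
The plan is to invoke the Corollary relating orbit dimension and Cartan class, namely $\dim \mathcal{O}(\alpha) = 2\bigl[\operatorname{cl}(\alpha)/2\bigr]$, so it is enough to show $\operatorname{cl}(\alpha) \in \{4,5\}$ for every nonclosed $\alpha \in \h(p,2)^*$. I would write a general linear form as
\[
 \alpha = \sum_{i=1}^p a_i \omega_i + b_1\alpha_1 + b_2\alpha_2 + \sum_{j=1}^{2p} c_j \beta_j.
\]
From the Maurer--Cartan equations of $\h(p,2)$ only the $\omega_i$ have nonzero differential, so one reads off immediately
\[
 {\rm d}\alpha = \alpha_1 \wedge B_1 + \alpha_2 \wedge B_2, \qquad B_1 := \sum_{i=1}^p a_i \beta_{2i-1}, \qquad B_2 := \sum_{i=1}^p a_i \beta_{2i}.
\]

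The nonclosedness of $\alpha$ is equivalent to saying that some $a_i \neq 0$, and in that case both $B_1$ and $B_2$ are nonzero. Because $B_1$ lies in the span of the $\beta_{2i-1}$ and $B_2$ in the complementary span of the $\beta_{2i}$, the four $1$-forms $\alpha_1, \alpha_2, B_1, B_2$ are automatically linearly independent in $\g^*$. This observation is the only subtle point in the argument; everything else is a routine wedge computation.

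From there, $({\rm d}\alpha)^2 = 2\,(\alpha_1 \wedge B_1)\wedge (\alpha_2 \wedge B_2) = \pm 2\,\alpha_1 \wedge \alpha_2 \wedge B_1 \wedge B_2$, which is nonzero by the independence just noted; and $({\rm d}\alpha)^3 = 0$ because each summand of the expansion contains either $\alpha_1 \wedge \alpha_1$ or $\alpha_2 \wedge \alpha_2$. Hence $\operatorname{cl}(\alpha) \in \{4,5\}$, and the Corollary gives $\dim \mathcal{O}(\alpha) = 4$. (Nilpotency of $\h(p,2)$ forces the class to be odd, hence equal to $5$, but this refinement is not needed for the statement.)
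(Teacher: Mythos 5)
Your proof is correct: the paper states this proposition without proof, and your computation is exactly the expected direct verification via the Cartan class. The key point — that $\mathrm{d}\alpha=\alpha_1\wedge B_1+\alpha_2\wedge B_2$ with $\alpha_1,\alpha_2,B_1,B_2$ independent whenever some $a_i\neq 0$, whence $(\mathrm{d}\alpha)^2\neq 0$ and $(\mathrm{d}\alpha)^3=0$, so $\operatorname{cl}(\alpha)\in\{4,5\}$ — is handled correctly, and the reduction to the corollary $\dim\mathcal{O}(\alpha)=2\bigl[\operatorname{cl}(\alpha)/2\bigr]$ is the right one.
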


To study the general case, we shall use the notion of characteristic sequence which is an invariant up to isomorphism of nilpotent Lie algebras (see for example \cite{RBreadth} for a presentation of this notion). For any $X \in \g$, let $c(X)$ be the ordered sequence, for the lexicographic order, of the dimensions of the Jordan blocks of the nilpotent operator $\operatorname{ad} X$. The characteristic sequence of $\g$ is the invariant, up to isomorphism,
\begin{gather*}c(\g)=\max \big\{c(X), \, X \in \g-\mathcal{C}^1(g)\big\}.\end{gather*}
In particular, if $c(\g)=(c_1,c_2,\dots,1)$, then $\g$ is $c_1$-step nilpotent. For example, we have $c(\h(p,2))=(c_1=2,\dots,c_p=2,1,\dots,1)$. A vector $X \in \g$ such that $c(X)=c(\g)$ is called a characteristic vector of $\g$.

\begin{Theorem}
Let $\g$ be a nilpotent Lie algebra such that the dimension of the coadjoint orbit of a nonclosed form is $4$ and admitting the decomposition $\g=\mm \oplus I$ where $I$ is an abelian ideal of codimension~$2$. Then~$\mm$ admits a basis $\{T_1,T_2\}$ of characteristic vector of~$\g$ with the same characteristic sequence and $\operatorname{Im} (\operatorname{ad} T_1)=\operatorname{Im} (\operatorname{ad} T_2)$.
\end{Theorem}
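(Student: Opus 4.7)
My plan is to fix an arbitrary basis $\{T_1,T_2\}$ of $\mm$, introduce the two nilpotent endomorphisms $h_i=\operatorname{ad}(T_i)|_I$ on the abelian ideal~$I$, and read off the structure of the pair $(h_1,h_2)$ from the Maurer--Cartan equations together with the Cartan--class hypothesis. Since $I$ has codimension $2$, the quotient $\g/I$ is a two-dimensional nilpotent Lie algebra and hence abelian, so $[\mm,\mm]\subseteq I$; because $[T_1,T_2]\in I$ acts trivially on the abelian ideal~$I$, the operators $h_1,h_2$ commute on~$I$.

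Denote by $\omega_1,\omega_2$ the forms dual to $T_1,T_2$ and identify $I^*$ with the annihilator of $\mm$. All brackets lie in $I$, so $d\omega_1=d\omega_2=0$, and for every $\omega\in I^*$ a direct computation gives
\[ d\omega = -\omega([T_1,T_2])\,\omega_1\wedge\omega_2 - \omega_1\wedge h_1^t\omega - \omega_2\wedge h_2^t\omega, \]
hence
\[ \omega\wedge(d\omega)^2 = -2\,\omega\wedge\omega_1\wedge\omega_2\wedge h_1^t\omega\wedge h_2^t\omega. \]
Since $\g$ is nilpotent the Cartan class is always odd, and by hypothesis equals~$5$ on every nonclosed form; so whenever $d\omega\ne 0$ the five forms $\omega,\omega_1,\omega_2,h_1^t\omega,h_2^t\omega$ must be linearly independent. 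Two consequences follow at once. First, $h_1^t\omega=0$ iff $h_2^t\omega=0$, which gives $\ker h_1^t=\ker h_2^t$ and dually $\operatorname{Im}(h_1)=\operatorname{Im}(h_2)$ in~$I$. Second, if $\omega$ annihilates $\operatorname{Im}(h_1)$ then $d\omega$ is at most a scalar multiple of $\omega_1\wedge\omega_2$, a form of class $\le 3$; by the hypothesis this must vanish, forcing $\omega([T_1,T_2])=0$, and therefore $[T_1,T_2]\in\operatorname{Im}(h_1)$. Combining these,
\[ \operatorname{Im}(\operatorname{ad} T_1)=h_1(I)+\K\{[T_1,T_2]\}=\operatorname{Im}(h_1)=\operatorname{Im}(h_2)=\operatorname{Im}(\operatorname{ad} T_2). \]

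It remains to verify that $T_1$ and $T_2$ are characteristic vectors sharing a common characteristic sequence. Any $X\in I$ satisfies $(\operatorname{ad} X)^2=0$ because $I$ is abelian and $[\mm,\mm]\subseteq I$, so its Jordan blocks have size at most~$2$; consequently, as soon as $h_1$ or $h_2$ has a block of size $\geq 3$ the maximum $c(\g)$ is realised inside $\mm$ rather than in~$I$. For $T\in\mm$ the Jordan type of $\operatorname{ad} T$ on $\g$ is determined by that of $h_T=\operatorname{ad} T|_I$ together with two small blocks contributed by $(T_1,T_2)$ whose image lies in $\operatorname{Im}(h_T)\subseteq I$, so the equality $c(T_1)=c(T_2)$ reduces to equality of Jordan partitions of $h_1$ and $h_2$. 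My plan here is an induction on~$k$ over the iterated kernels $\ker(h_i^t)^k$: at each level a discrepancy would produce a nonzero $\omega$ with $\omega\wedge h_1^t\omega\wedge h_2^t\omega=0$ yet $d\omega\ne 0$, contradicting class~$5$. Equal iterated kernels give the same Jordan partition, and this common partition coincides with $c(\g)$ since Jordan types realised inside $I$ are bounded by parts of size~$2$.

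The genuine obstacle is the induction in this last step: coupling the triple--independence condition to the commutator relation $[h_1,h_2]=0$ in order to pin down equality of the iterated kernel filtrations. The remainder of the argument is a bookkeeping exercise on the Maurer--Cartan formula derived above, in the spirit of the case analysis already used to locate the example $\h(p,2)$.
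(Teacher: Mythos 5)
Your first half is correct and is actually sharper than what the paper establishes. The Maurer--Cartan identity ${\rm d}\omega=-\omega([T_1,T_2])\,\omega_1\wedge\omega_2-\omega_1\wedge h_1^t\omega-\omega_2\wedge h_2^t\omega$ for $\omega\in I^*$, combined with the oddness of the Cartan class on a nilpotent algebra, correctly converts the hypothesis into the condition that $\{\omega,h_1^t\omega,h_2^t\omega\}$ is free whenever ${\rm d}\omega\neq0$; from this $\ker h_1^t=\ker h_2^t$, hence $\operatorname{Im}h_1=\operatorname{Im}h_2$ and $[T_1,T_2]\in\operatorname{Im}h_1$, so $\operatorname{Im}(\operatorname{ad}T_1)=\operatorname{Im}(\operatorname{ad}T_2)$ for \emph{any} basis of $\mm$. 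This is a genuinely different route from the paper, which passes to the quotient and the model algebras $L_{n,c}$ of Section~\ref{section3} to produce $T_1$ and then simply posits a $T_2$ with the same image; your computation has the merit of proving that such a $T_2$ exists.

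The gap is in the second half, and you have named it without closing it. Two distinct things are missing. First, the equality of the Jordan partitions of $h_1$ and $h_2$: the independence condition directly controls only the first kernels $\ker h_i^t$, and it is not shown how a discrepancy in $\dim\ker(h_1^t)^k$ for $k\ge 2$ produces a single covector $\omega$ with ${\rm d}\omega\neq 0$ and $\omega\wedge h_1^t\omega\wedge h_2^t\omega=0$; the proposed induction coupling this to $[h_1,h_2]=0$ is a plan, not a proof. Second, and more seriously, being a characteristic vector requires $c(T_1)\ge c(X)$ for \emph{every} $X\in\g\setminus\mathcal{C}^1(\g)$, in particular for $X=aT_1+bT_2+Y$ with $Y\in I$; your argument only compares $T_1$, $T_2$ with elements of $I$. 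The restriction of $\operatorname{ad}X$ to $I$ is $ah_1+bh_2$, and for commuting nilpotents the Jordan type of a generic member of the pencil is lexicographically maximal, so even if $c(h_1)=c(h_2)$ it must still be ruled out that some $ah_1+bh_2$ (or some perturbation by $Y$, which modifies the images of $T_1,T_2$ inside $I$) has a strictly larger type. Nothing in the proposal addresses this; note also that fixing an \emph{arbitrary} basis of $\mm$ at the outset makes the task harder than the statement requires, since the theorem only asserts the existence of a suitable basis, and choosing $T_1,T_2$ generically in $\mm$ would at least guarantee maximality within the pencil. Until these two points are supplied, the assertion that $T_1,T_2$ are characteristic vectors with the same characteristic sequence is not established.
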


\begin{proof} Let $T$ be a nonnull vector of $\mm$ such that $\widetilde{\g}=\g/\K\{T\}$ is a nilpotent Lie algebra given in~(\ref{Lc}). Then $\widetilde{\g}=\mm_1 \oplus I$ and if $T_1 \in \mm_1$, $T_1 \neq 0$, then~$T_1$ is a characteristic vector of~$\widetilde{\g}$. Then~$T_1$ can be considered as a~characteristic vector of $\g$. Let be $T_2 \in \mm$ such that $\operatorname{ad} T_1$ and $\operatorname{ad} T_2$ have the same image in~$I$. Then $T_2$ is also a characteristic vector with same characteristic sequence, if not $c(T_1)$ will be not maximal.
\end{proof}

Let us consider a Jordan basis $\big\{X_1^1,\dots,X_1^{c_1},X_2^1,\dots,X_2^{c_2}, \dots X_k^{c_k},T_2,T_1\big\}$ of $\operatorname{ad} T_1$ correspon\-ding to the characteristic sequence $c(\g)=(c_1,\dots,c_k,1,1)$. In the dual basis \begin{gather*}\big\{\omega_1^1,\dots, \omega_1^{c_1},\omega_2^1,\dots,\omega_2^{c_2}, \dots \omega_k^{c_k},\alpha_2,\alpha_1\big\}\end{gather*} we have
\begin{gather*}{\rm d}\omega_s^{j}= \alpha_1 \wedge \omega_s^{j-1} + \alpha_2 \wedge \beta_s^j\end{gather*}
for any $s=1,\dots,k$ and $j=1,\dots,c_s$ where $\beta_i^{s}$ satisfies $\beta_i^{s} \wedge \omega_i^{s-1} \neq 0$. The Jacobi conditions imply that $\beta_s^{1}, \dots,\beta_s^{c_s}$ are the dual basis of a Jordan block. This implies

\begin{Lemma}
If $c(\g)=(c_1,\dots,c_k,1,1)$ is the characteristic sequence of $\operatorname{ad} T_1$, then for any $c_s \in c(\g)$, $c_s \neq 1$, then $c_s-1$ is also in $c(\g)$.
\end{Lemma}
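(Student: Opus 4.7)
The approach is to combine the Maurer--Cartan relations ${\rm d}\omega_s^j=\alpha_1\wedge\omega_s^{j-1}+\alpha_2\wedge\beta_s^j$ with the Jacobi identity ${\rm d}^2=0$, and to extract from the forms $\beta_s^j$ a Jordan chain of length $c_s-1$ for $\operatorname{ad} T_1$.

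\emph{Step 1: A Jacobi-type relation on the $\beta$'s.}  Apply ${\rm d}^2=0$ to $\omega_s^{j+1}$. Since ${\rm d}\alpha_1={\rm d}\alpha_2=0$,
\[
0={\rm d}^2\omega_s^{j+1}=-\alpha_1\wedge {\rm d}\omega_s^{j}-\alpha_2\wedge {\rm d}\beta_s^{j+1}=-\alpha_1\wedge\alpha_2\wedge\beta_s^{j}-\alpha_2\wedge {\rm d}\beta_s^{j+1},
\]
which forces ${\rm d}\beta_s^{j+1}=\alpha_1\wedge\beta_s^{j}+\alpha_2\wedge\gamma_s^{j+1}$ for some $\gamma_s^{j+1}\in\g^*$; with the convention $\omega_s^0:=0$, one likewise gets ${\rm d}\beta_s^1=\alpha_2\wedge\gamma_s^1$. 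Expanding $\beta_s^{j+1}=\sum_{t,l}a^{(j+1)}_{t,l}\omega_t^l+b_1^{(j+1)}\alpha_1+b_2^{(j+1)}\alpha_2$ and matching with ${\rm d}\omega_t^l=\alpha_1\wedge\omega_t^{l-1}+\alpha_2\wedge\beta_t^l$ yields the shift relation $a^{(j+1)}_{t,l+1}=a^{(j)}_{t,l}$ for $l\geq 1$ and $b_2^{(j)}=0$ for $1\leq j\leq c_s-1$.

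\emph{Step 2: Forcing $\beta_s^1=0$.}  In the present nilpotent setting the Cartan class of a non-closed form is odd and, since coadjoint orbits are of dimension $4$, equals $5$. But ${\rm d}\omega_s^1=\alpha_2\wedge\beta_s^1$ always satisfies $({\rm d}\omega_s^1)^2=0$, so $\operatorname{cl}(\omega_s^1)\leq 3$. Hence $\omega_s^1$ must be closed, i.e.\ $\alpha_2\wedge\beta_s^1=0$; combined with $b_2^{(1)}=0$ and $a^{(1)}_{t,l}=0$ for $l\geq 2$ (both extracted from Step~1), this forces $\beta_s^1=0$.

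\emph{Step 3: Reading off a Jordan chain of length $c_s-1$.}  With $\beta_s^1=0$, the $c_s-1$ forms $\beta_s^2,\ldots,\beta_s^{c_s}$ remain linearly independent (they form the nontrivial portion of the dual-basis-of-a-Jordan-block promised just before the lemma). The relations ${\rm d}\beta_s^{j+1}=\alpha_1\wedge\beta_s^{j}+\alpha_2\wedge\gamma_s^{j+1}$ for $2\leq j\leq c_s-1$, together with ${\rm d}\beta_s^2=\alpha_2\wedge\gamma_s^2$, translate via duality into vectors $V_s^2,\ldots,V_s^{c_s}\in\g$ satisfying $[T_1,V_s^{j+1}]\equiv -V_s^{j}$ (modulo the annihilator of $\operatorname{span}\{\beta_s^2,\ldots,\beta_s^{c_s}\}$) and $[T_1,V_s^2]\equiv 0$. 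This is a Jordan chain of length $c_s-1$ for $\operatorname{ad} T_1$; since the Jordan decomposition of $\operatorname{ad} T_1$ has block sizes $(c_1,\ldots,c_k,1,1)$, and the shift relation from Step~1 shows that this chain projects nontrivially onto a new direction outside the original size-$c_s$ block spanned by $X_s^1,\ldots,X_s^{c_s}$, the decomposition must contain a block of size exactly $c_s-1$. Hence $c_s-1\in c(\g)$.

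\textbf{Main obstacle.}  The delicate point is the last one: a priori a Jordan chain of length $c_s-1$ only guarantees a block of size $\geq c_s-1$, and one has to rule out that it is simply a sub-chain of some larger pre-existing block (of size $\geq c_s$). Sharpening ``$\geq c_s-1$'' to ``exactly $c_s-1$'' requires the explicit bookkeeping of the coefficients $a^{(j)}_{t,l}$ from Step~1 together with the vanishing $\beta_s^1=0$ from Step~2, which jointly imply that the dual vectors $V_s^j$ escape the span of the $X_s^\cdot$'s.
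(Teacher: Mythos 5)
Your Steps 1 and 2 are correct, and they are in substance what the paper itself does: the relation ${\rm d}\beta_s^{j+1}=\alpha_1\wedge\beta_s^{j}+\alpha_2\wedge\gamma_s^{j+1}$ extracted from ${\rm d}^2\omega_s^{j+1}=0$, together with $\beta_s^1=0$ (which you obtain from the class condition, and which also follows from $\operatorname{Im}(\operatorname{ad} T_2)=\operatorname{Im}(\operatorname{ad} T_1)$ in the theorem preceding the lemma), is exactly the content of the paper's one-line justification that ``the Jacobi conditions imply that $\beta_s^{1},\dots,\beta_s^{c_s}$ are the dual basis of a Jordan block''. The problem is Step 3, and you have put your finger on it yourself without resolving it: a chain $\beta_s^{c_s}\mapsto\beta_s^{c_s-1}\mapsto\cdots\mapsto\beta_s^{2}\mapsto 0$ of $c_s-1$ independent covectors for the transpose of $\operatorname{ad} T_1$ only yields a Jordan block of size \emph{at least} $c_s-1$ --- which is already witnessed by the block $s$ itself and gives nothing. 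Your proposed repair, showing that the dual vectors escape the span of $X_s^{1},\dots,X_s^{c_s}$, does not close the gap: nothing prevents the chain from being a proper sub-chain of a \emph{different} pre-existing block of size $\geq c_s$. (Also, the linear independence of $\beta_s^{2},\dots,\beta_s^{c_s}$ is asserted by appeal to the very ``dual basis of a Jordan block'' claim being proved; it can be obtained honestly from the class condition, but as written the step is circular.)

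The gap is not merely presentational; the configuration you need to exclude actually occurs. Take $\g$ of dimension $8$ with basis $\{T_1,T_2,X_1^1,X_1^2,X_1^3,X_2^1,X_2^2,X_2^3\}$, $I=\R\big\{X_t^l\big\}$ abelian, $[T_1,X_t^l]=X_t^{l+1}$ and $[T_2,X_1^l]=X_2^{l+1}$, $[T_2,X_2^l]=-X_1^{l+1}$ for $l=1,2$, all other brackets zero; equivalently $I\cong\C^3$ with $T_1$ acting by the complex nilpotent shift $N$ and $T_2$ by ${\rm i}N$. Jacobi holds since $N$ and ${\rm i}N$ commute; $I$ is a maximal abelian ideal of codimension $2$; $T_1$ and $T_2$ are characteristic vectors with the same characteristic sequence and equal images; and for any nonclosed $\omega$ one has ${\rm d}\omega=\alpha_1\wedge P+\alpha_2\wedge JP$ with $0\neq P\in\R\big\{\omega_1^1,\omega_2^1,\omega_1^2,\omega_2^2\big\}$ and $J$ the complex structure, so $P\wedge JP\neq 0$, $({\rm d}\omega)^2\neq0$ and $({\rm d}\omega)^3=0$: every nontrivial coadjoint orbit has dimension $4$. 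Yet $c(\g)=(3,3,1,1)$, which contains $3$ but not $2$; here $\beta_1^2=\omega_2^1$ and $\beta_1^3=\omega_2^2$, so the length-$2$ chain sits strictly inside the second block of size $3$. Unless some hypothesis is being used that neither you nor the paper makes explicit, the statement itself fails on this example, so no amount of bookkeeping in Step 3 can rescue the argument; what your Steps 1--2 genuinely prove is only the existence, for each $c_s\geq 2$, of a block of size $\geq c_s-1$ supporting the chain of $\beta$'s.
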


Thus, if $c(\g)$ is a strictly decreasing sequence, that is if $c_s > c_{s-1}$ for $c_s \geq 2$, we shall have
\begin{gather*}c(\g)=(c_1,c_1-1,c_1-2,\dots,2,1,1) , \qquad {\rm or } \qquad c(\g)=(c_1,c_1-1,c_1-2,\dots,2,1,1,1).\end{gather*}
In all the other cases, we shall have
\begin{gather*}c(\g)=(c_1,\dots,c_1,c_1-1,\dots,c_1-1,c_1-2,\dots, 1).\end{gather*}

\looseness=-1 Let us describe the nilpotent Lie algebras whose the characteristic sequence is strictly decreasing, the other cases can be deduced. Assume that $c_1=l \geq 3$. Then $\g$ is isomorphic to
\begin{gather}
 \big[T_1,X^i_1\big]=X^{i+1}_1, \qquad i=1,\dots,l-1, \qquad \big[T_2, X^i_1\big]=0 , \qquad i=1,\dots,l-1 ,\nonumber\\
 \big[ T_1,X_2^i\big]=X_2^{i+1}, \qquad i=1,\dots,l-2, \qquad \big[ T_2 X^i_2\big]=X_1^{i+1}, \qquad i=1,\dots,l-1, \nonumber\\
 \cdots\cdots\cdots\cdots\cdots\cdots\cdots\cdots\cdots\cdots\cdots\cdots\cdots\cdots\cdots\cdots\cdots\cdots \cdots\nonumber\\
 \big[ T_1,X_{l-1}^1\big]=X_{l-1}^{2}, \qquad \big[ T_2, X_{l-1}^i\big]=X_{l-2}^{i+1} , \qquad i=1,2,\nonumber \\
 \big[ T_1,X_{l}^1\big]=0, \qquad \big[ T_2, X_{l}^1\big]=X_{l-1}^{2}, \label{n1}
\end{gather}
 when $c(\g)=(l,l-1,\dots,2,1,1,1)$ or
\begin{gather}
 \big[T_1,X^i_1\big]=X^{i+1}_1, \qquad i=1,\dots,l-1,\nonumber\\ \big[T_2, X^l_1\big]= X_{l-1}^{2}, \qquad \big[T_2, X^i_1\big]=0 , \qquad i=1,\dots,l-1, \nonumber\\
 \big[ T_1,X_2^i\big]=X_2^{i+1}, \qquad i=1,\dots,l-2, \qquad \big[ T_2 X^i_2\big]=X_1^{i+1}, \qquad i=1,\dots,l-1, \nonumber\\
 \cdots \cdots\cdots\cdots\cdots\cdots\cdots\cdots\cdots\cdots\cdots\cdots\cdots\cdots\cdots\cdots\cdots\cdots\cdots\nonumber\\
 \big[ T_1,X_{l-1}^1\big]=X_{l-1}^{2},\qquad \big[ T_2, X_{l-1}^i\big]=X_{l-2}^{i+1}, \qquad i=1,2, \label{n2}
\end{gather}
 when $c(\g)=(l,l-1,\dots,2,1,1)$. In particular we deduce
 \begin{Proposition} Let $\g$ be a $n$-dimensional nilpotent Lie algebra whose nontrivial coadjoint orbits are of dimension~$4$. Then if $c (\g)=(c_1,\dots,c_k,1)$ is its characteristic sequence, then
 \begin{gather*}c_1 \leq \frac{\sqrt{8n-7}-1}{2}.\end{gather*}
 \end{Proposition}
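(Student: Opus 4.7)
The plan is to combine the preceding Lemma (which forces the characteristic sequence to contain every integer between $2$ and $c_1$) with the structural decomposition $\g=\mm\oplus I$ from the Theorem (which forces at least two $1$'s in the characteristic sequence), and then read off the bound from the resulting quadratic inequality.

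First I would invoke the Lemma: if $c_s \in c(\g)$ and $c_s \ge 2$, then $c_s-1 \in c(\g)$. Applied inductively starting at $c_1$, this shows that each of the integers $c_1, c_1-1, \dots, 2$ must appear at least once as an entry of $c(\g)=(c_1,\dots,c_k,1,\dots,1)$. Hence the sum of the ``large'' blocks is at least
\begin{gather*}
c_1 + (c_1-1) + \cdots + 2 \;=\; \frac{c_1(c_1+1)}{2}-1.
\end{gather*}

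Next I would show that the characteristic sequence contains at least two $1$'s. By the preceding Theorem, $\g=\mm\oplus I$ with $\dim \mm=2$, and $\mm$ admits a basis $\{T_1,T_2\}$ of characteristic vectors of $\g$ with $\operatorname{Im}(\operatorname{ad} T_1)=\operatorname{Im}(\operatorname{ad} T_2)$. Since $\g/I$ is abelian, $[T_1,T_2]\in I$; as this commutator lies in $\operatorname{Im}(\operatorname{ad} T_1)\cap I$, we may absorb it by replacing $T_2$ by $T_2-Y$ for a suitable $Y\in I$, so WLOG $[T_1,T_2]=0$. Then both $T_1$ and $T_2$ lie in $\ker(\operatorname{ad} T_1)$ and give rise to two distinct Jordan blocks of size $1$, so at least two entries of $c(\g)$ equal $1$.

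Combining these two observations,
\begin{gather*}
n \;=\; \sum_{i} c_i \;\geq\; \frac{c_1(c_1+1)}{2}-1 + 2 \;=\; \frac{c_1(c_1+1)}{2}+1,
\end{gather*}
which rearranges to $c_1^2+c_1-2(n-1)\le 0$. Solving this quadratic inequality in $c_1$ yields exactly
\begin{gather*}
c_1 \;\leq\; \frac{-1+\sqrt{1+8(n-1)}}{2} \;=\; \frac{\sqrt{8n-7}-1}{2},
\end{gather*}
as claimed.

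The main obstacle is the bookkeeping in the second step: one needs to justify that the second $1$ in the characteristic sequence is really forced, not just present in the particular strictly decreasing normal forms (\ref{n1}) and (\ref{n2}). In the non-strictly-decreasing case the sum of large blocks only grows, so the bound becomes slack and the strictly decreasing case is in fact the extremal one; this is why verifying the ``extra $1$'' from the $T_2$ argument is what makes the constant $-7$ (rather than $+1$) under the square root correct.
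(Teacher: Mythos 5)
Your proof is correct and follows essentially the same route as the paper's, which merely remarks that the extremal (strictly decreasing) sequences $(c_1,c_1-1,\dots,2,1,1)$ give $n=\frac{c_1(c_1+1)}{2}+1$ and that every other admissible sequence only increases $n$ for a given $c_1$. The second trailing $1$ that you work to justify is already built into the paper's preceding Theorem, whose Jordan basis $\big\{X_1^1,\dots,X_k^{c_k},T_2,T_1\big\}$ exhibits $c(\g)=(c_1,\dots,c_k,1,1)$ with $T_2$ and $T_1$ as singleton blocks, so you could cite that directly in place of the absorption argument for $[T_1,T_2]$.
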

 In fact, for the Lie algebras (\ref{n1}) or (\ref{n2}) we have $2n= \frac{l(l+1)}{2}+1$ or $2n= \frac{l(l+1)}{2}+2$. Other nilpotent Lie algebras satisfying this hypothesis on the dimension of coadjoint orbits have charac\-teristic sequences $(c_1,\dots,c_k,1)$ with $c_1 \geq c_2\dots \geq k \geq 1$, the inequalities being here not strict.

If $c_1=2$, the Lie algebra is $2$-step nilpotent and the characteristic sequence of $\g$ is of type $c(\g)=(\underbrace{2,\dots,2}_l,\underbrace{1,\dots,1}_{l-s+2})$ where $s \leq l$.
In fact, $\g$ is an extension by a derivation of $\widetilde{\g}=\mm_1 \oplus I$ which is equal to $0$ on $\mm_1$. Let us note that the characteristic sequence of the Kaplan algebra~(\ref{n72}) is~$(2,2,2,1)$, but this Lie algebra is a particular case which do not corresponds to the previous decomposition. In general, the Maurer--Cartan equations will be done by
\begin{gather*}
\begin{cases}
 {\rm d}\omega^2_1 = \alpha_1 \wedge \omega_1^1+\alpha_2 \wedge \beta_1, \\
 {\rm d}\omega^2_2 = \alpha_1 \wedge \omega^1_2+\alpha_2 \wedge \beta_2, \\
 \cdots \cdots \cdots \cdots \cdots \cdots \cdots \cdots \\
 {\rm d}\omega^2_l = \alpha_1 \wedge \omega^1_l+\alpha_2 \wedge \beta_l
\end{cases}
\end{gather*}
with $d \alpha_1 ={\rm d}\alpha_2={\rm d}\omega_1^i={\rm d}\beta_i =0$, $i=1,\dots,l$. and $\beta_{s+1} \wedge \dots \wedge \beta_l \wedge \omega_1^1 \wedge \dots \wedge \omega_1^l \neq 0$, and for any $i=1,\dots,s$, $\beta_i \in \R\big\{\omega_1^1,\dots,\omega_1^l\big\}$ with
\begin{gather*}\omega_i^1 \wedge \beta_j - \omega_j^1 \wedge \beta_i \neq 0 \end{gather*}
for any $i,j \in \{1, \dots,s\}$.

\section{Lie algebras with coadjoint orbits of maximal dimension}

In this section we are interested by $n$-dimensional Lie algebra admitting orbits of dimension~$n$ if~$n$ is even or dimension $n-1$ is~$n$ is odd. This is equivalent to consider Frobenius Lie algebras in the first case and contact Lie algebras in the second case.

\subsection[$(2p+1)$-dimensional Lie algebras with a $2p$-dimensional coadjoint orbit]{$\boldsymbol{(2p+1)}$-dimensional Lie algebras with a $\boldsymbol{2p}$-dimensional coadjoint orbit}

Let $\h_{2p+1}$ be the $(2p+1)$-dimensional Heisenberg algebra. There exists a basis $\{X_1,\dots,X_{2p+1}\}$ such that the Lie brackets of $\h_{2p+1}$ are $[X_1,X_2]=\dots =[X_{2p-1},X_{2p}]=X_{2p+1}$, and $[X_i,X_j]=0$ for $i<j$ and $ (i,j)\notin \{(1,2),\dots,(2p-1,2p) \}$. If $\{\omega_1,\dots,\omega_{2p+1}\}$ denotes the dual basis of $\h_{2p+1}^*$, the Maurer--Cartan equations writes
\begin{gather*}{\rm d}\omega_{2p+1}=-\omega_1 \wedge \omega_2 - \dots -\omega_{2p-1} \wedge \omega_{2p}, \qquad {\rm d}\omega_i=0 ,\qquad i=1, \dots , 2p.\end{gather*}
Then $\omega_{2p+1}$ is a contact form on $\h_{2p+1}$ and the coadjoint orbit $\mathcal{O}(\omega_{2p+1})$ is of maximal dimen\-sion~$2p$. Let us note that, in $\h_{2p+1}$ all the orbits are of dimension $2p$ or are singular. In the following, we will denote by $\mu_0$ the Lie bracket of $\h_{2p+1}$.

\begin{Definition}A formal quadratic deformation $\g_t$ of $\h_{2p+1}$ is a $(2p+1)$-dimensional Lie algebra whose Lie bracket $\mu_t$ is given by
\begin{gather*}\mu_t(X,Y)=\mu_0(X,Y)+t\varphi_1(X,Y)+t^2\varphi_2(X,Y), \end{gather*}
where the maps $\varphi_i$ are bilinear on $\h_{2p+1}$ with values in $\h_{2p+1}$ and satisfying
\begin{alignat*}{3}
& \delta_{\mu_0} \varphi_1 =0, \qquad && \varphi_1\circ \varphi _1+ \delta_{\mu_0} \varphi_2=0, &\nonumber\\
& \varphi_2\circ \varphi _2=0, \qquad && \varphi_1\circ \varphi _2+\varphi_2\circ \varphi _1=0.&%\label{quadra}
\end{alignat*}
\end{Definition}
In this definition $\delta_{\mu} $ denotes the coboundary operator of the Chevalley--Eilenberg cohomology of a Lie algebra $\g$ whose Lie bracket is $\mu$ with values in~$\g$, and if $\varphi$ and $\psi$ are bilinear maps, then $\varphi \circ\psi$ is the trilinear map given by
\begin{gather*}\varphi \circ\psi(X,Y,Z)=\varphi(\psi(X,Y),Z)+ \varphi(\psi(Y,Z),X)+\varphi(\psi(Z,X),Y).\end{gather*} In particular $\varphi\circ\varphi=0$ is equivalent to Jacobi identity and~$\varphi$, in this case, is a Lie bracket and the coboundary operator writes
\begin{gather*}\delta_\mu \varphi= \mu \circ \varphi + \varphi \circ \mu.\end{gather*}

\begin{Theorem}[\cite{GR-DGA}]Any $(2p+1)$-dimensional contact Lie algebra $\g$ is isomorphic to a quadratic formal deformation of $\h_{2p+1}$.
\end{Theorem}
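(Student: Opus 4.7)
The plan is to fix a Darboux-type basis adapted to the contact form, exhibit the Heisenberg bracket $\mu_0$ as the ``dominant'' part of the Lie bracket $\mu$ of $\g$, and realize the remainder as two successive terms of an expansion produced by a weighted dilation in a formal parameter $t$.

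First I would fix a contact form $\alpha\in\g^*$ and use $\operatorname{cl}(\alpha)=2p+1$ to obtain a basis $\{\omega_1,\dots,\omega_{2p+1}\}$ of $\g^*$ with $\omega_{2p+1}=\alpha$ and ${\rm d}\alpha=-\omega_1\wedge\omega_2-\cdots-\omega_{2p-1}\wedge\omega_{2p}$. Writing $\{X_1,\dots,X_{2p+1}\}$ for the dual basis, the identity ${\rm d}\alpha(X,Y)=-\alpha[X,Y]$ forces the $X_{2p+1}$-component of $\mu(X_i,X_j)$ (for $i,j\leq 2p$) to coincide with $\mu_0(X_i,X_j)$, where $\mu_0$ denotes the Heisenberg bracket on $\h_{2p+1}$ in this basis. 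In other words, $\mu$ and $\mu_0$ already agree on the ``top'' component of every bracket with both entries in the contact hyperplane.

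Next I would introduce the dilation $\phi_t\colon\g\to\g$ defined by $\phi_t(X_i)=tX_i$ for $i\leq 2p$ and $\phi_t(X_{2p+1})=t^2X_{2p+1}$, and set $\mu_t(X,Y):=\phi_t^{-1}\mu(\phi_tX,\phi_tY)$. A direct weight count on the basis pairs should give $\mu_t=\mu_0+t\varphi_1+t^2\varphi_2$, where $\varphi_1$ collects the ``weight $-1$'' pieces of $\mu-\mu_0$ (the $X_k$-components with $k\leq 2p$ of $\mu(X_i,X_j)$ for $i,j\leq 2p$, together with the $X_{2p+1}$-component of $\mu(X_i,X_{2p+1})$ for $i\leq 2p$), and $\varphi_2$ collects the ``weight $-2$'' pieces (the $X_k$-components with $k\leq 2p$ of $\mu(X_i,X_{2p+1})$ for $i\leq 2p$). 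Specializing at $t=1$ returns $\mu$ itself, so $\g$ coincides with the member $\g_1$ of the family.

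Finally, since $\phi_t$ is a vector-space isomorphism for every $t\neq 0$, the bracket $\mu_t$ satisfies the Jacobi identity $\mu_t\circ\mu_t=0$ for every such $t$. Viewing this as a polynomial identity in $t$ and equating each coefficient to zero yields successively $\D\varphi_1=0$, $\varphi_1\circ\varphi_1+\D\varphi_2=0$, $\varphi_1\circ\varphi_2+\varphi_2\circ\varphi_1=0$, and $\varphi_2\circ\varphi_2=0$, which are exactly the defining relations of a quadratic formal deformation of $\h_{2p+1}$. The main obstacle is the weight bookkeeping in the second step: one has to verify, pair by pair, that no power $t^k$ with $k\geq 3$ can appear in $\mu_t$. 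This truncation at degree two is precisely what forces the deformation to be \emph{quadratic}, and it is the only real content beyond the initial Darboux normalization and a standard cohomological expansion of Jacobi.
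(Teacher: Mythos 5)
Your argument is correct, and it is essentially the standard one: the paper itself gives no proof here (the theorem is quoted from \cite{GR-DGA}), and the dilation $\phi_t(X_i)=tX_i$, $\phi_t(X_{2p+1})=t^2X_{2p+1}$ adapted to the Darboux basis of the contact form is exactly the contraction-onto-$\h_{2p+1}$ mechanism used there. The weight count checks out --- for $i,j\leq 2p$ the powers are $t^0$ (the $X_{2p+1}$-component, which the identity ${\rm d}\alpha(X,Y)=-\alpha[X,Y]$ forces to equal $\mu_0$) and $t^1$, while for pairs involving $X_{2p+1}$ they are $t^1$ and $t^2$ (the $t^1$ piece in fact vanishes since ${\rm i}(X_{2p+1}){\rm d}\alpha=0$ gives $\alpha[X_{2p+1},Y]=0$, matching the paper's normal form $\varphi_1(X_l,X_{2p+1})=0$) --- so no negative or higher powers occur and the coefficient-by-coefficient expansion of $\mu_t\circ\mu_t=0$ yields precisely the four defining relations of a quadratic formal deformation.
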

Then its Lie bracket $\mu$ writes $\mu=\mu_0+t\varphi_1+t^2\varphi_2$ and the bilinear maps $\varphi_1$ and $\varphi_2$ have the following expressions in the basis $\{X_1,\dots,X_{2p+1}\}$:
\begin{gather*}
\varphi_1(X_{2k-1},X_{2k})=\sum_{s=1}^{2p}C_{2k-1,2k}^sX_s, \qquad k=1,\dots,p,\nonumber\\
\varphi_1(X_l,X_r)=\sum_{s=1}^{2p}C_{l,r}^sX_s, \qquad 1\leq l<r\leq 2p,\qquad (l,r)\neq (2k-1,2k),%\label{phi1}
\end{gather*}
and
\begin{gather}\begin{split}&
\varphi_2(X_l,X_r)=0, \qquad l,r=1,\dots,2p,\nonumber\\
& \varphi_2(X_l,X_{2p+1})=\sum_{s=1}^{2p}C_{l,{2p+1}}^sX_s, \qquad l=1,\dots,2p,%\label{phi2}
\end{split}
\end{gather}
and the nondefined values are equal to $0$. Since the center of any deformation of~$\h_{2p+1}$ is of dimension less than or equal to the dimension of $\h_{2p+1}$, we deduce:

\begin{Corollary}[\cite{GozeCras1}]\label{center}The center $Z(\g)$ of a contact Lie algebra $\g$ is of dimension less than or equal to~$1$.
\end{Corollary}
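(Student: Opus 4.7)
The plan is to bound $\dim Z(\g)$ by exhibiting the centre as a subspace of the associated subspace $A({\rm d}\alpha)$ of a contact form, and then using the class hypothesis to show the latter is one-dimensional. The deformation theorem immediately preceding the corollary is stated so that one may, alternatively, invoke upper semicontinuity of the centre dimension along the family $\mu_0+t\varphi_1+t^2\varphi_2$; I will outline both routes since the second is what the introductory remark (``since the center of any deformation of $\h_{2p+1}$\dots'') appeals to.

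First, the direct route. By hypothesis $\g$ admits a contact form $\alpha\in \g^*$, i.e.\ $\operatorname{cl}(\alpha)=2p+1$. For any $X\in Z(\g)$ and any $Y\in \g$ we have $\mathrm{i}(X){\rm d}\alpha(Y)={\rm d}\alpha(X,Y)=-\alpha[X,Y]=0$, so $X\in A({\rm d}\alpha)$. Thus $Z(\g)\subseteq A({\rm d}\alpha)$. Since $\operatorname{cl}(\alpha)=2p+1$ means $\alpha\wedge({\rm d}\alpha)^p\neq 0$ and $({\rm d}\alpha)^{p+1}=0$, the $2$-form ${\rm d}\alpha$ has rank exactly $2p$ on the $(2p+1)$-dimensional space $\g$, hence $\dim A({\rm d}\alpha)=1$, and the desired inequality $\dim Z(\g)\leq 1$ follows. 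This step is essentially the only substantive point, and it is short.

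Second, the deformation route. Using the theorem, write the bracket of $\g$ as $\mu=\mu_0+t\varphi_1+t^2\varphi_2$ in the standard Heisenberg basis $\{X_1,\dots,X_{2p+1}\}$. The centre $Z(\g_t)$ is the kernel of the linear map $X\mapsto \operatorname{ad}_\mu X$, whose matrix depends polynomially on $t$. At $t=0$ one checks by direct inspection of the Heisenberg brackets $[X_{2k-1},X_{2k}]=X_{2p+1}$ that $Z(\h_{2p+1})=\K X_{2p+1}$, so $\operatorname{ad}_{\mu_0}$ has rank $2p$. Because the rank of a matrix of polynomials is lower semicontinuous, it stays $\geq 2p$ on a Zariski-open neighbourhood of $t=0$, so $\dim Z(\g_t)\leq (2p+1)-2p=1$.

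The main obstacle in the second approach is making the semicontinuity argument apply at \emph{the} deformation parameter corresponding to our contact algebra $\g$, not merely at generic $t$. This is exactly where the first approach rescues us: the contact hypothesis guarantees that ${\rm d}\alpha$ has rank $2p$ on $\g$ itself, which is the same as saying $\operatorname{ad}_\mu$ attains maximal rank $2p$, and so the generic stratum contains the point of interest. I would therefore present the proof by the direct route via $A({\rm d}\alpha)$, which avoids the semicontinuity subtlety altogether and relies only on the characterization of Cartan class recalled in the introduction.
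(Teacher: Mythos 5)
Your direct argument is correct and complete: for a contact form $\alpha$ on the $(2p+1)$-dimensional algebra $\g$, every central $X$ satisfies ${\rm i}(X){\rm d}\alpha=0$, so $Z(\g)\subseteq A({\rm d}\alpha)$, and $\alpha\wedge({\rm d}\alpha)^p\neq 0$ forces ${\rm d}\alpha$ to have rank exactly $2p$, whence $\dim A({\rm d}\alpha)=1$. This is, however, not the route the paper takes: the corollary is deduced there from the theorem that every contact Lie algebra is a quadratic formal deformation of $\h_{2p+1}$, combined with the fact that the dimension of the center can only decrease under deformation (equivalently, $\dim Z$ is upper semicontinuous on the variety of Lie algebra laws and $\h_{2p+1}$, whose center is one-dimensional, lies in the closure of the ${\rm GL}(n)$-orbit of $\g$). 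The subtlety you raise about the specific value of $t$ is legitimate if one argues pointwise along the curve $\mu_t$, but it dissolves in the orbit-closure formulation: $\dim Z$ is constant on the orbit of $\mu$ and the set $\{\mu\colon \dim Z(\mu)\geq k\}$ is Zariski-closed, so $\dim Z(\g)\leq\dim Z(\h_{2p+1})=1$ with no genericity caveat. Your first route is more elementary and self-contained -- it needs only the definition of Cartan class and nothing about deformations -- while the paper's route is essentially free once the deformation theorem is in hand and fits its broader theme of reading off properties of contact and Frobenius algebras from their contractions. Either proof is acceptable; yours has the advantage of working verbatim from the hypothesis $\operatorname{cl}(\alpha)=2p+1$ and of making transparent that the center actually sits inside the one-dimensional radical $A({\rm d}\alpha)$ of ${\rm d}\alpha$.
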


\subsubsection{Case of nilpotent Lie algebras}
If $\g$ is a contact nilpotent Lie algebra, its center is of dimension $1$. In the given basis, this center is $\R\{X_{2p+1}\}$. This implies that $\varphi_2=0$.
\begin{Proposition}Any $(2p+1)$-dimensional contact nilpotent Lie algebra is isomorphic to a~li\-near deformation $\mu_t=\mu_0+t\varphi_1$ of the Heisenberg algebra~$\h_{2p+1}$.
\end{Proposition}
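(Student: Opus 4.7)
The plan is to show that in the basis provided by the preceding Theorem, $X_{2p+1}$ itself must span the center of $\g$; since the $\varphi_2$-term is the \emph{only} nonzero contribution to brackets of $X_{2p+1}$ with the other basis vectors, this will immediately force $\varphi_2 \equiv 0$. By that Theorem I identify $\g$ with $\h_{2p+1}$ equipped with $\mu=\mu_0+t\varphi_1+t^2\varphi_2$, and I record three structural features of the explicit formulas given above: both $\varphi_1$ and $\varphi_2$ take values in $V:=\R\{X_1,\dots,X_{2p}\}$; $\varphi_1$ vanishes on any pair involving $X_{2p+1}$; and $\varphi_2$ is entirely determined by its values $\varphi_2(X_l,X_{2p+1})$ with $l\leq 2p$. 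Since $\g$ is nilpotent its center is nontrivial, and by Corollary~\ref{center} $\dim Z(\g)\leq 1$, so $Z(\g)=\R\{Z\}$ with $Z\neq 0$.

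Next I write $Z=Z_0+cX_{2p+1}$ with $Z_0\in V$ and decompose the centrality condition $\mu(Z,X_j)=0$ (for each $j\leq 2p$) along the splitting $\g=V\oplus \R X_{2p+1}$. The key observation is that the $\R X_{2p+1}$-component of $\mu(Z,X_j)$ equals $\mu_0(Z_0,X_j)$: the $\varphi_1$ and $\varphi_2$ contributions land in $V$, and $\mu_0(X_{2p+1},X_j)=0$ since $X_{2p+1}$ is central in $\h_{2p+1}$. But $\mu_0|_{V\times V}$ is the nondegenerate symplectic pairing into $\R X_{2p+1}$, so demanding $\mu_0(Z_0,X_j)=0$ for every $j\leq 2p$ forces $Z_0=0$. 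Hence $Z$ is a nonzero multiple of $X_{2p+1}$ and $X_{2p+1}\in Z(\g)$.

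It only remains to observe that $\mu(X_{2p+1},X_l)=t^2\varphi_2(X_{2p+1},X_l)$ for $l\leq 2p$ (the $\mu_0$ and $\varphi_1$ contributions vanishing), and that this must be zero since $X_{2p+1}$ is central; together with the third structural feature recorded above, this gives $\varphi_2\equiv 0$, so $\mu=\mu_0+t\varphi_1$ as required. The main obstacle is the middle step: ruling out a ``tilted'' central line $\R\{Z_0+cX_{2p+1}\}$ with $Z_0\neq 0$. This is precisely where the Heisenberg shape of $\mu_0$ (nondegeneracy of the symplectic form on $V$) combines with the fact that $\varphi_1,\varphi_2$ are $V$-valued (so they contribute nothing to the $\R X_{2p+1}$-component of any bracket) to make the central line rigid and pin it to $\R X_{2p+1}$.
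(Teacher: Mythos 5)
Your proposal is correct and follows essentially the same route as the paper, which simply asserts that a contact nilpotent Lie algebra has one-dimensional center, that in the given basis this center is $\R\{X_{2p+1}\}$, and that this forces $\varphi_2=0$. You have merely supplied the details the paper leaves implicit --- in particular the nondegeneracy argument ruling out a tilted central line $\R\{Z_0+cX_{2p+1}\}$ with $Z_0\neq 0$, which is a worthwhile clarification but not a different proof.
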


As consequence, we have
\begin{Corollary}Any $(2p+1)$-dimensional contact nilpotent Lie algebra is isomorphic to a central extension of a $2p$-dimensional symplectic Lie algebra by its symplectic form.
\end{Corollary}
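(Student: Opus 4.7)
The plan is to exhibit $\g$ explicitly as a central extension by exploiting the description $\mu = \mu_0 + t\varphi_1$ from the previous proposition, together with the observation that the $X_{2p+1}$-component of $\mu$ is furnished entirely by the Heisenberg part $\mu_0$.

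First I would pin down the centre. By Corollary~\ref{center} one has $\dim Z(\g) \leq 1$, and nilpotency forces $Z(\g) \neq 0$. The explicit formulas for $\varphi_1$ show that its image is contained in $\mathrm{span}\{X_1,\dots,X_{2p}\}$ and that it vanishes on every pair involving $X_{2p+1}$; combined with the $\mu_0$-centrality of $X_{2p+1}$, this yields $Z(\g) = \R X_{2p+1}$. Hence the quotient $\g/Z(\g)$ is a well-defined $2p$-dimensional Lie algebra, whose bracket is the projection of $t\varphi_1$.

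Next I would read off the cocycle classifying the extension. For $1 \leq l < r \leq 2p$,
\begin{gather*}
\mu(X_l,X_r) = \mu_0(X_l,X_r) + t\varphi_1(X_l,X_r),
\end{gather*}
where the first summand lies in $\R X_{2p+1}$ and the second in $\mathrm{span}\{X_1,\dots,X_{2p}\}$. Thus the $X_{2p+1}$-component of $\mu(X_l,X_r)$ agrees with that of $\mu_0$, which means that the Chevalley--Eilenberg $2$-cocycle $\omega \in Z^2(\g/Z(\g),\R)$ classifying the central extension is nothing but the Maurer--Cartan $2$-form of $\h_{2p+1}$ pushed down to $\g/Z(\g)$:
\begin{gather*}
\omega = -\omega_1\wedge\omega_2 - \omega_3\wedge\omega_4 - \cdots - \omega_{2p-1}\wedge\omega_{2p}.
\end{gather*}
This $2$-form is manifestly non-degenerate on the $2p$-dimensional space $\g/Z(\g)$, and it is closed because it is a Chevalley--Eilenberg $2$-cocycle. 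Therefore $(\g/Z(\g),\omega)$ is a symplectic Lie algebra and $\g$ is its central extension by $\omega$.

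The only delicate point I anticipate is the opening step: certifying that $X_{2p+1}$ really remains central under the deformation, so that both the quotient $\g/Z(\g)$ and the cocycle $\omega$ are unambiguously defined. Once centrality is established, everything else amounts to reading off the $X_{2p+1}$-component of $\mu$ and recognising it as the standard symplectic form on $\g/Z(\g)$.
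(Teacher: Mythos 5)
Your proof is correct and follows essentially the same route as the paper: both identify the $2p$-dimensional complement of the centre $\R X_{2p+1}$, equipped with the bracket $\varphi_1$, as the symplectic Lie algebra, and both recognise the extension cocycle as $\theta=\omega_1\wedge\omega_2+\cdots+\omega_{2p-1}\wedge\omega_{2p}$. The only cosmetic difference is that the paper verifies ${\rm d}_{\varphi_1}\theta=0$ directly from the deformation condition $\delta_{\mu_0}\varphi_1=0$, whereas you invoke the general fact that the $2$-form classifying a central extension is automatically a Chevalley--Eilenberg cocycle of the quotient; both justifications are valid.
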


\begin{proof} Let $\frak{t}$ be the $2p$-dimensional vector space generated by $\{X_1,\dots,X_{2p}\}$. The restriction to~$\frak{t}$ of the $2$-cocycle $\varphi_1$ is with values in $\frak{t}$. Since $\varphi_1 \circ \varphi_1=0$, it defines on~$\frak{t}$ a structure of $2p$-dimensional Lie algebra. If $\{\omega_1,\dots,\omega_{2p+1}\}$ is the dual basis of the given classical basis of $\h_{2p+1}$, then $\theta=\omega_1\wedge\omega_2+\dots +\omega_{2p-1}\wedge\omega_{2p}$ is a $2$-form on $\frak{t}$. We denote by ${\rm d}_{\varphi_1}$ the differential operator on the Lie algebra $(\frak{t},\varphi_1)$, that is ${\rm d}_{\varphi_1}\omega(X,Y)=-\omega(\varphi_1(X,Y)$ for all $X,Y \in \frak{t}$ and $\omega \in \frak{t}^*$. Since~$\mu_0$ is the Heisenberg Lie algebra multiplication, the condition $\D\varphi_1=0$ is equivalent to ${\rm d}_{\varphi_1}(\theta)=0$. It implies that $\theta $ is a closed $2$-form on~$\frak{t}$ and~$\g$ is a central extension of~$\frak{t}$ by~$\theta$.
\end{proof}

We deduce:

\begin{Theorem}[\cite{GR-DGA}]\label{contactnilpotent} Let $\g$ be a $(2p+1)$-dimensional $k$-step nilpotent Lie algebra. Then there exists on $\g$ a coadjoint orbit of dimension $2p$ if and only if $\g$ is a central extension of a $2p$-dimensional $(k-1)$-step nilpotent symplectic Lie algebra $\frak{t}$, the extension being defined by the $2$-cocycle given by the symplectic form.
\end{Theorem}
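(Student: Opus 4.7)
The plan is to build directly on the corollary immediately preceding the theorem, which identifies any $(2p+1)$-dimensional contact nilpotent Lie algebra with a central extension $\g = \mathfrak{t} \oplus \R X_{2p+1}$ of a $2p$-dimensional symplectic Lie algebra $(\mathfrak{t},\varphi_1,\theta)$ by its symplectic form, the bracket being $[X,Y]_\g = \varphi_1(X,Y) + \theta(X,Y)X_{2p+1}$ for $X,Y \in \mathfrak{t}$ and $X_{2p+1}$ central. Since the corollary of Section~2.3 gives $\dim \mathcal{O}(\alpha) = 2p$ if and only if $\operatorname{cl}(\alpha) \in \{2p,2p+1\}$, and since the Cartan class of any nonzero form on a nilpotent Lie algebra is odd, the existence of a $2p$-dimensional coadjoint orbit on the nilpotent $\g$ is equivalent to $\g$ being contact. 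Thus the theorem reduces to matching the nilpotency step of $\g$ with that of $\mathfrak{t}$ under this central extension correspondence.

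For the matching I would compute the descending central series of $\g$ by induction on $i$. Writing $\mathcal{C}^i_{\varphi_1}(\mathfrak{t})$ for the $i$-th term in the descending central series of $(\mathfrak{t},\varphi_1)$ and $\mathcal{C}^0_{\varphi_1}(\mathfrak{t}) = \mathfrak{t}$, a direct calculation from the extension formula yields, for every $i \geq 1$,
\begin{equation*}
\mathcal{C}^i(\g) \,=\, \mathcal{C}^i_{\varphi_1}(\mathfrak{t}) \,+\, \theta\bigl(\mathfrak{t},\mathcal{C}^{i-1}_{\varphi_1}(\mathfrak{t})\bigr)\cdot X_{2p+1}.
\end{equation*}
The decisive input is the nondegeneracy of $\theta$: the scalar factor $\theta(\mathfrak{t},\mathcal{C}^{i-1}_{\varphi_1}(\mathfrak{t}))$ equals the full line $\R$ as soon as $\mathcal{C}^{i-1}_{\varphi_1}(\mathfrak{t}) \neq 0$ and is zero otherwise. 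Imposing the conditions $\mathcal{C}^k(\g) = 0$ and $\mathcal{C}^{k-1}(\g) \neq 0$ for $\g$ to be exactly $k$-step nilpotent then translates, via the displayed formula, into $\mathcal{C}^{k-1}_{\varphi_1}(\mathfrak{t}) = 0$ and $\mathcal{C}^{k-2}_{\varphi_1}(\mathfrak{t}) \neq 0$, i.e., $\mathfrak{t}$ is exactly $(k-1)$-step nilpotent.

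For the converse direction, starting from a $(k-1)$-step nilpotent symplectic Lie algebra $(\mathfrak{t},\theta)$, I would verify that the bracket $[X,Y]_\g = [X,Y]_{\mathfrak{t}} + \theta(X,Y)X_{2p+1}$ on $\mathfrak{t} \oplus \R X_{2p+1}$ satisfies the Jacobi identity precisely because the $2$-form $\theta$ is closed on $\mathfrak{t}$; the dual form $\omega$ of $X_{2p+1}$ then satisfies $-{\rm d}\omega|_{\mathfrak{t}\wedge\mathfrak{t}} = \theta$, which is nondegenerate, so $({\rm d}\omega)^p \neq 0$ and $\omega$ is a contact form, producing a $2p$-dimensional coadjoint orbit. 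Combined with the first direction, this establishes both implications of the theorem.

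The main obstacle is the inductive central-series computation, and specifically the bookkeeping of how the center $\R X_{2p+1}$ gets absorbed into $\mathcal{C}^i(\g)$. The shift by one between the nilpotency indices of $\g$ and $\mathfrak{t}$ is due entirely to the fact that $X_{2p+1}$ sits inside $\mathcal{C}^i(\g)$ for exactly one step longer than $\mathcal{C}^{i-1}_{\varphi_1}(\mathfrak{t})$ remains nonzero, which in turn relies crucially on $\theta$ being nondegenerate; without that input, the scalar $\theta(\mathfrak{t},\mathcal{C}^{i-1}_{\varphi_1}(\mathfrak{t}))$ could vanish for spurious reasons and spoil the clean correspondence between the two step counts.
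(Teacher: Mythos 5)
Your reduction and overall route are correct, and they match the paper's (largely implicit) argument: the paper states this theorem as a citation of~\cite{GR-DGA} and simply writes ``We deduce'' after the corollary identifying contact nilpotent Lie algebras with central extensions of symplectic Lie algebras by their symplectic form. Your first paragraph reproduces exactly the intended reduction (odd Cartan class on nilpotent algebras $\Rightarrow$ a $2p$-dimensional orbit forces a class-$(2p+1)$ form, i.e., a contact structure, and then the corollary applies), so the genuine content you add is the bookkeeping of the nilpotency steps, which the paper omits entirely. That part of your argument is sound in substance, but the displayed formula for $\mathcal{C}^i(\g)$ is not quite a ``direct calculation'': the span of the elements $\varphi_1(X,Y)+\theta(X,Y)X_{2p+1}$ contains $X_{2p+1}$ only if one can kill the $\mathfrak{t}$-component while keeping the $\theta$-component nonzero, which is not the same as saying $\theta(\mathfrak{t},\mathcal{C}^{i-1}_{\varphi_1}(\mathfrak{t}))\neq 0$. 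The clean way to get what you need is to work only at the endpoints: the projection $\pi\colon\g\to\mathfrak{t}$ is a Lie algebra morphism, so $\pi(\mathcal{C}^i(\g))=\mathcal{C}^i_{\varphi_1}(\mathfrak{t})$ and $\mathcal{C}^i(\g)\subseteq\mathcal{C}^i_{\varphi_1}(\mathfrak{t})\oplus\R X_{2p+1}$; if $\mathcal{C}^{k-2}_{\varphi_1}(\mathfrak{t})\neq 0$ and $\mathcal{C}^{k-1}_{\varphi_1}(\mathfrak{t})=0$, then for $Y\in\mathcal{C}^{k-2}_{\varphi_1}(\mathfrak{t})$, $Y\neq0$, the bracket $[X,Y]=\varphi_1(X,Y)+\theta(X,Y)X_{2p+1}$ has vanishing $\mathfrak{t}$-part, so nondegeneracy of $\theta$ puts $X_{2p+1}$ into $\mathcal{C}^{k-1}(\g)$ and the next bracket kills it; the converse inclusion is argued the same way. (This a posteriori also validates your formula for all $i$, since the descending central series is decreasing; for $i=1$ alone one would instead invoke that a symplectic form on a nilpotent Lie algebra cannot be exact, as nilpotent algebras admit no Frobenius forms.) Your converse construction via closedness and nondegeneracy of $\theta$ is correct and completes the equivalence.
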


Since the classification of nilpotent Lie algebras is known up the dimension $7$, the previous result permits to establish the classification of contact nilpotent Lie algebras of dimension~$3$,~$5$ and~$7$ using the classification in dimension $2$,~$4$ and $6$~\cite{BG}. For example, the classification of $5$-dimensional nilpotent Lie algebras with an orbit of dimension $4$ is the following:
\begin{itemize}\itemsep=0pt
\item $\g$ is $4$-step nilpotent:

$\frak{n}^1_5$: $[ X_1,X_i]=X_{i+1}$, $ i=2,3,4$, $[ X_2,X_3 ]=X_5$.

\item $\g$ is $3$-step nilpotent:

$\frak{n}^3_5$: $[ X_1,X_i]=X_{i+1}$, $i=2,3$, $[ X_2,X_5]=X_4$.

\item $\g$ is $2$-step nilpotent:

$\frak{n}^6_5$: $[ X_1,X_2]=X_3$, $[ X_4,X_5]=X_3$.
\end{itemize}

We shall now study a contact structure in respect of the characteristic sequence of a nilpotent Lie algebra. For any $X \in \g$, let $c(X)$ be the ordered sequence, for the lexicographic order, of the dimensions of the Jordan blocks of the nilpotent operator $\operatorname{ad} X$. The characteristic sequence of~$\g$ is the invariant, up to isomorphism,
\begin{gather*}c(\g)=\max \big\{c(X), \, X \in \g-\mathcal{C}^1(\g)\big\}.\end{gather*}
Then $c(\g)$ is a sequence of type $(c_1,c_2,\dots,c_k,1)$ with $c_1 \geq c_2 \geq \cdots \geq c_k \geq 1$ and $c_1+c_2+\dots+c_k+1= n =\dim \g$. For example, \begin{enumerate}\itemsep=0pt
 \item[1)] $c(\g)=(1,1,\dots,1)$ if and only if $\g$ is abelian,
 \item[2)] $c(\g)=(2,1,\dots,1)$ if and only if $\g$ is a direct product of an Heisenberg Lie algebra by an abelian ideal,
 \item[3)] if $\g$ is $2$-step nilpotent then there exists $p$ and $q$ such that $c(\g)=(2,2,\dots,2,1,\dots,1)$ with $2p+q=n$, that is $p$ is the occurrence of $2$ in the characteristic sequence and $q$ the occurrence of~$1$,
 \item[4)] $\g$ is filiform if and only if $c(\g)=(n-1,1)$.
 \end{enumerate}
This invariant was introduced in~\cite{AG} in order to classify $7$-dimensional nilpotent Lie algebras. A~link between the notions of breath of nilpotent Lie algebra introduced in~\cite{KMS} and characteristic sequence is developed in~\cite{RBreadth}. If $c(\g)=(c_1,c_2,\dots,c_k,1)$ is the characteristic sequence of~$\g$ then~$\g$ is $c_1$-step nilpotent.

 Assume now that $c_1=c_2=\dots =c_l$ and $c_{l+1} < c_l$. Then the dimension of the center of~$\g$ is greater than~$l$ because in each Jordan blocks corresponding to $c_1,\dots,c_l$, the last vector is in~$\mathcal{C}^{c_1}(\g)$ which is contained in the center of~$\g$. We deduce

 \begin{Proposition} Let $\g$ be a contact nilpotent Lie algebra. Then its characteristic sequence is of type $c(\g)=(c_1,c_2,\dots,c_k,1)$ with $c_2 \neq c_1$.
 \end{Proposition}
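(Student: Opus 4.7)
The plan is to combine the bound on the center provided by Corollary~\ref{center} with the Jordan-block counting argument sketched in the paragraph preceding the statement. Since $\g$ is nilpotent, its center $Z(\g)$ is nontrivial, while Corollary~\ref{center} forces $\dim Z(\g) \leq 1$; hence $\dim Z(\g) = 1$. The strategy is then to derive a contradiction from the assumption $c_1 = c_2$ by producing at least two linearly independent central elements.

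Concretely, suppose for contradiction that $c_1 = c_2$, and let $l \geq 2$ be the largest integer with $c_1 = c_2 = \cdots = c_l$. I would pick a characteristic vector $X \in \g$, so that $\operatorname{ad} X$ decomposes into Jordan blocks of sizes $c_1, c_2, \ldots, c_k, 1$. For each $i \in \{1, \dots, l\}$ select a cyclic vector $v^{(i)}$ whose Jordan chain under $\operatorname{ad} X$ has length $c_1$, and set $z_i = (\operatorname{ad} X)^{c_1 - 1}(v^{(i)})$. Each $z_i$ lies in $\mathcal{C}^{c_1}(\g)$, which is contained in $Z(\g)$ because $\g$ is $c_1$-step nilpotent. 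Since the $z_i$ are the top vectors of $l$ distinct Jordan blocks of $\operatorname{ad} X$, they are linearly independent in $\g$, giving $\dim Z(\g) \geq l \geq 2$. This contradicts $\dim Z(\g) \leq 1$, forcing $c_2 < c_1$.

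The only substantive step is verifying that top vectors of distinct maximal Jordan blocks remain independent as elements of $\g$ (and not merely as abstract block tops), but this is automatic because the Jordan basis of $\operatorname{ad} X$ is a basis of $\g$. Once this is in place, the conclusion is an immediate application of Corollary~\ref{center}; no further ingredient is required, and the bulk of the reasoning is already contained in the discussion preceding the proposition.
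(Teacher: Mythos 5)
Your proof is correct and follows essentially the same route as the paper: the paper's own argument is precisely the Jordan-block count in the paragraph preceding the statement (the tops of the $l$ maximal Jordan blocks of $\operatorname{ad}X$ lie in the last nonzero term of the lower central series, hence in $Z(\g)$, giving $\dim Z(\g)\geq l$) combined with Corollary~\ref{center}. Your write-up only adds the explicit verification of linear independence of these top vectors, which is a harmless elaboration of the same idea.
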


\begin{Example}\quad
 \begin{enumerate}\itemsep=0pt
\item If $\g$ is $2$-step nilpotent, then $c(\g)=(2,\dots,2,1,\dots,1)$ and if $\g$ is a contact Lie algebra, we have $c(\g)=(2,1,\dots,1)$. We find again the results given in \cite{GR-DGA} which precise that any $2$-step nilpotent $(2p+1)$-dimensional contact Lie algebra is isomorphic to the Heisenberg algebra $\frak{h}_{2p+1}$.
 \item If $\g$ is $3$-step nilpotent, then $c(\g)=(3,2,\dots,2,1,\dots,1)$ or $c(\g)=(3,1,\dots,1)$. In case of dimension $7$, this gives $c(\g)=(3,2,1,1)$ and $c(\g)=(3,1,1,1,1)$. For each one of characteristic sequences there are contact Lie algebras:
 \begin{enumerate}\itemsep=0pt
 \item The Lie algebra given by
 \begin{gather*}[X_1,X_2]=X_3,\qquad [X_1,X_3]=[X_2,X_5]=[X_6,X_7]=X_4,\end{gather*}
 is a $7$-dimensional contact Lie algebra of characteristic $(3,1,1,1,1)$
 \item The Lie algebras given
 \begin{gather*}[X_1,X_i] = X_{i+1}, \qquad i = 2,3,5, \qquad [X_2,X_5] = X_7, \qquad [X_2,X_7] = X_4, \\ [X_5,X_6] = X_4,\qquad [X_5,X_7] = \alpha X_4\end{gather*}
 with $\alpha \neq 0$ are contact Lie algebras of characteristic $(3,2,1,1)$.
 \end{enumerate}
 \item If $\g$ is $4$-step nilpotent, then $c(\g)=(4,3,\dots,1)$. For example, the $9$-dimensional Lie algebra given by
 \begin{gather*}
 [ X_1,X_i ] =X_{i+1},\qquad i=2,3,4,6,7, \\
 [ X_6,X_9]=X_3, \qquad [ X_7,X_9 ]=X_4,\qquad [ X_8,X_9]=X_5, \\
 [ X_2,X_6]=(1+\alpha)X_4, \qquad [ X_3,X_6]=X_5,\qquad [ X_2,X_7]=\alpha X_5
\end{gather*}
 with $\alpha \neq 0$ is a contact Lie algebra with $c(\g)=(4,3,1,1)$.
 \end{enumerate}
Let us note also, that in \cite{Rfili}, we construct the contact nilpotent filiform Lie algebras, that is with characteristic sequence equal to $(2p,1)$.
\end{Example}

\subsubsection{The non-nilpotent case}
It is equivalent to consider Lie algebras with a contact form defined by a quadratic nonlinear deformations of the Heisenberg algebra. We refer to~\cite{GR-DGA} to the description of this class of Lie algebras.

An interesting particular case consists to determine all the $(2p+1)$-dimensional Lie algebras ($p \neq 0$), such that all the coadjoint orbits of nontrivial elements are of dimension $2p$.

\begin{Lemma}[\cite{GozeCras1}] Let $\g$ a simple Lie algebra of rank $r$ and dimension $n$. Then any nontrivial linear form $\alpha$ on $\g$ satisfies
\begin{gather*}\operatorname{cl}(\alpha) \leq n-r+1.\end{gather*}
Moreover, if $\g$ is of classical type, we have
\begin{gather*}\operatorname{cl}(\alpha) \geq 2r.\end{gather*}
\end{Lemma}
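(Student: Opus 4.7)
The plan is to prove the two inequalities separately, both via the Killing form identification $\g\cong\g^*$ and the class-index relation established in Section 2.2.

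For the upper bound $\operatorname{cl}(\alpha)\le n-r+1$, I would first merge the two cases of the relation $\dim\g_\alpha=\dim A({\rm d}\alpha)$ into the uniform inequality
\[
\operatorname{cl}(\alpha)\ \le\ n-\dim\g_\alpha+1,
\]
which holds whether $\operatorname{cl}(\alpha)$ is even or odd. I would then show that the index of a semisimple Lie algebra equals its rank. This uses the Killing form $K$ to identify $\g\cong\g^*$: the form $\alpha$ corresponds to a unique $X_\alpha\in\g$, and the invariance identity $K([X_\alpha,Y],Z)=K(X_\alpha,[Y,Z])$ shows that $\g_\alpha$ coincides with the centralizer $C(X_\alpha)$. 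The classical regularity theorem then yields $\dim C(X_\alpha)\ge r$ for every $X_\alpha$, with equality on the dense open subset of regular elements (where the centralizer is a Cartan subalgebra). Substituting $\dim\g_\alpha\ge r$ into the displayed inequality gives $\operatorname{cl}(\alpha)\le n-r+1$.

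For the lower bound $\operatorname{cl}(\alpha)\ge 2r$ in the classical case, I would first reformulate it through the corollary giving $\dim\mathcal{O}(\alpha)=2\lfloor\operatorname{cl}(\alpha)/2\rfloor$: the bound is equivalent to $\dim\mathcal{O}(\alpha)\ge 2r$, hence to $\dim C(X_\alpha)\le n-2r$ for every non-zero $X_\alpha$ under the Killing form identification. I would attack this via Jordan decomposition $X_\alpha=S+N$ with $[S,N]=0$, using
\[
C(X_\alpha)=Z_{C(S)}(N),
\]
where $C(S)$ is a reductive Levi subalgebra. This decouples the estimate into bounding centralizers of semisimple elements (controlled by eigenvalue multiplicities) and centralizers of nilpotent elements within classical Levi subalgebras (controlled by Jordan partitions), each computable in the standard matrix realizations of $\mathfrak{sl}_{r+1}$, $\mathfrak{sp}_{2r}$, $\mathfrak{so}_{2r+1}$, $\mathfrak{so}_{2r}$.

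The main obstacle is the lower bound's type-by-type verification. In type $A_r$ the bound is sharp and is realized both by a rank-one projector $e_{11}-\tfrac{1}{r+1}\mathrm{Id}$ (minimal semisimple orbit) and by a single nontrivial Jordan block (minimal nilpotent orbit), both of dimension exactly $2r$; a direct matrix computation shows that every non-scalar traceless matrix has centralizer of dimension at most $n-2r$. For types $B_r$, $C_r$, $D_r$ (with the non-simple case $D_2$ excluded and the low-rank coincidences $B_2\simeq C_2$, $D_3\simeq A_3$ reducing to type $A$), the minimal nilpotent orbit has dimension strictly greater than $2r$, so the critical constraints come from semisimple elements preserving the defining bilinear form; their centralizers decompose as products of a smaller orthogonal or symplectic algebra with a $\mathfrak{gl}$-block, and bounding this against the multiplicity partition yields $\dim C(X_\alpha)\le n-2r$ uniformly. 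The fact that the lower bound is attained precisely in type $A$, as singled out in the excerpt, then drops out of the sharpness analysis.
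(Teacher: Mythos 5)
The paper does not actually prove this lemma; it is quoted from the 1976 note [GozeCras1], so your argument has to be judged on its own. Your first half is correct and essentially complete: the uniform inequality $\operatorname{cl}(\alpha)\le n-\dim\g_{\alpha}+1$ follows from $\mathcal{C}(\alpha)=A(\alpha)\cap A({\rm d}\alpha)$ having codimension at most one in $A({\rm d}\alpha)=\g_\alpha$; the Killing-form identification gives $\g_{\alpha}=C(X_{\alpha})$ by invariance and nondegeneracy; and $\dim C(X)\ge r$ for every $X$ follows from upper semicontinuity of $X\mapsto\dim\ker(\operatorname{ad}X)$ together with the density of regular semisimple elements. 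This is the standard route and it works.

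The second half, however, is a plan rather than a proof, and the one concrete structural claim you make to organize it is false. You assert that for types $B_r$, $C_r$, $D_r$ the minimal nilpotent orbit has dimension strictly greater than $2r$, so that only semisimple elements need to be checked sharply. For $C_r=\mathfrak{sp}_{2r}$ the minimal nilpotent orbit (the orbit of a rank-one element, of dimension $2h^{\vee}-2=2r$) has dimension exactly $2r$, and the same equality occurs for $B_2$ and $D_3$; so in type $C$ the nilpotent case is precisely an equality case of the bound $\dim C(X)\le n-2r$ and cannot be waved away. Moreover the two computations you defer --- that every nonzero nilpotent $N$ in a classical algebra has $\dim Z_{C(S)}(N)\le n-2r$ for each Jordan partition, and that every nonzero semisimple $S$ satisfies $\#\{\alpha\in\Delta\colon\alpha(S)\ne 0\}\ge 2r$ (equivalently $\dim C(S)=r+\#\{\alpha\colon\alpha(S)=0\}\le n-2r$, with equality already in type $A_r$ for $S$ with centralizer $\mathfrak{gl}_r$) --- are the entire content of the lower bound; "a direct matrix computation shows" and "yields the bound uniformly" leave exactly the part that distinguishes the classical types unproved. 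Note also that semisimple orbits are closed, so one cannot shortcut the mixed case by degenerating every nonzero orbit onto the minimal nilpotent one; your reduction $C(S+N)=Z_{C(S)}(N)\subseteq C(S)$ is the right way to handle it, but it must then be run through each of $\mathfrak{sl}_{r+1}$, $\mathfrak{so}_{2r+1}$, $\mathfrak{sp}_{2r}$, $\mathfrak{so}_{2r}$ explicitly.
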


In particular, a simple Lie algebra admits a contact form if its rank is equal to $1$ and this Lie algebra is isomorphic to $\mathfrak{sl}(2,\R)$ or $\mathfrak{so}(3)$. From Proposition~\ref{simple} these algebras are the only Lie algebras of dimension~$2p$ or $2p+1$ whose orbits $\mathcal{O}(\alpha)$ with $\alpha \neq 0$ are of dimension~$2p$.

\subsection[$2p$-dimensional Lie algebras with a $2p$-dimensional coadjoint orbit]{$\boldsymbol{2p}$-dimensional Lie algebras with a $\boldsymbol{2p}$-dimensional coadjoint orbit}

Such Lie algebra is Frobenius (see~\cite{Ooms}). Since the Cartan class of a linear form on a nilpotent Lie algebra is always odd, this Lie algebra is not nilpotent. In the contact case, we have seen that any contact Lie algebra is a deformation of the Heisenberg algebra. On other words, any contact Lie algebra can be contracted on the Heisenberg algebra. In the Frobenius case, we have a similar but more complicated situation. We have to determine an irreducible family of Frobenius Lie algebras with the property that any Frobenius Lie algebra can be contracted on a Lie algebra of this family.

In a first step, we recall the notion of contraction of Lie algebras. Let $\g_0$ be a $n$-dimensional Lie algebra whose Lie bracket is denoted by $\mu_0$. We consider $ \{ f_{t} \} _{t \in ]0,1]} $ a sequence of isomorphisms in $\mathbb{K}^{n}$ with $\mathbb{K}=\R$ or $\C$. Any Lie bracket
\begin{gather*}
\mu_{t}=f_t^{-1}\circ\mu_{0} ( f_{t}\times f_{t} )
\end{gather*}
corresponds to a Lie algebra $\g_t$ which is isomorphic to $\g_0$. If the limit $\lim\limits_{t\rightarrow 0}\mu_{t}$ exists (this limit is computed in the finite-dimensional vector space of bilinear maps in~$\K^n$), it defines a Lie bracket~$\mu$ of a~$n$-dimensional Lie algebra $\g$ called a contraction of $\g_{0}$.

\begin{Remark} Let $\frak{L}^{n}$ be the variety of Lie algebra laws over $\mathbb{C}^{n}$ provided with its Zariski topology. The algebraic structure of this variety is defined by the Jacobi polynomial equations on the structure constants. The linear group ${\rm GL} ( n,\mathbb{C} ) $ acts on~$\mathbb{C}^{n}$ by changes of basis. A~Lie algebra~$\frak{g}$ is contracted to the $\g_0$ if the Lie bracket of~$\g$ is in the closure of the orbit of the Lie bracket of $\g_ 0$ by the group action (for more details see \cite{Bu, GozeEllipse}).
\end{Remark}

\subsubsection{Classification of complex Frobenius Lie algebras up to a contraction}
Let $\g$ be a $2p$-dimensional Frobenius Lie algebra. There exists a basis $\{X_1, \dots, X_{2p}\}$ of $\g$ such that the dual basis $\{\omega_1,\dots,\omega_{2p}\}$ is adapted to the Frobenius structure, that is, \begin{gather*}
{\rm d} \omega_1=\omega_1\wedge \omega_2+\omega_3\wedge \omega_4+\dots+\omega_{2p-1}\wedge \omega_{2p}.
\end{gather*} In the following, we define Lie algebras, not with its brackets, but with its Maurer--Cartan equations. We assume here that $\K=\C$.

\begin{Theorem}\label{frob} Let $\g_{a_1,\dots,a_{p-1}}$, $ a_i \in \C$ be the Lie algebras defined by
\begin{gather*}
{\rm d}\omega_{1}=\omega_{1}\wedge\omega_{2}+\sum_{k=1}^{p-1}\omega_{2k+1}\wedge\omega_{2k+2},\qquad {\rm d}\omega_{2}=0,\\
{\rm d}\omega_{2k+1}=a_{k}\omega_{2}\wedge\omega_{2k+1},\qquad 1\leq k\leq p-1,\\
{\rm d}\omega_{2k+2}=- ( 1+a_{k} ) \omega_{2}\wedge\omega_{2k+2},\qquad 1\leq k\leq p-1.
\end{gather*}
Then any $2p$-dimensional Frobenius Lie algebra can be contracted in an element of the family $\mathcal{F}=\{\g_{a_1,\dots,a_{p-1}}\}_{a_i \in \C}$. Moreover, any element of $\mathcal{F}$ cannot be contracted in another element of this family.
\end{Theorem}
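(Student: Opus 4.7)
The proof breaks into two parts: showing that an arbitrary $2p$-dimensional Frobenius Lie algebra contracts to some $\g_{a_1,\dots,a_{p-1}}\in\mathcal{F}$, and showing that no element of $\mathcal{F}$ is the contraction of another. I would handle the first part constructively, by writing the Maurer--Cartan equations in an adapted basis and applying an explicit one-parameter family of diagonal rescalings; the second part I would handle by exhibiting an invariant of $\g_{a_1,\dots,a_{p-1}}$ (the spectrum of $\operatorname{ad} X_2$) that is preserved along the $\operatorname{GL}(2p,\C)$-orbit and distinguishes different members of the family.

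For the contraction, start with an adapted basis $\{\omega_1,\dots,\omega_{2p}\}$ of $\g^*$ where ${\rm d}\omega_1=\omega_1\wedge\omega_2+\sum_{k=1}^{p-1}\omega_{2k+1}\wedge\omega_{2k+2}$, and expand ${\rm d}\omega_j=\sum_{i<l}C^j_{il}\omega_i\wedge\omega_l$ for $j\geq 2$. The identity ${\rm d}^2\omega_1=0$, a $3$-form equation using all $2p$ basis forms, decomposes into a long list of linear relations among the $C^j_{il}$; combined with the analogous identities ${\rm d}^2\omega_j=0$, these force the ``non-canonical'' coefficients either to vanish or to be determined by the canonical ones $C^{2k+1}_{2,2k+1}=a_k$ and $C^{2k+2}_{2,2k+2}=-(1+a_k)$. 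Next introduce the rescaling $f_t\in\operatorname{GL}(\g)$ acting on the dual basis by $\omega_1\mapsto t^{-2}\omega_1$, $\omega_2\mapsto\omega_2$, $\omega_{2k+1}\mapsto t^{-1}\omega_{2k+1}$, $\omega_{2k+2}\mapsto t^{-1}\omega_{2k+2}$. This rescaling multiplies each $C^j_{il}$ by a nonnegative power of $t$, leaves the canonical structure constants unchanged, and sends every other Jacobi-admissible structure constant to $0$ as $t\to 0$. The limiting algebra is therefore $\g_{a_1,\dots,a_{p-1}}$.

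For the non-contraction part, compute the spectrum of $\operatorname{ad} X_2$ in $\g_{a_1,\dots,a_{p-1}}$: from the Maurer--Cartan equations one reads off $\{0,1,-a_1,1+a_1,\dots,-a_{p-1},1+a_{p-1}\}$, where $0$ and $1$ are the eigenvalues on $\C X_2$ and $\C X_1$ respectively. A contraction is a limit of $\operatorname{GL}(2p,\C)$-conjugates, so the characteristic polynomial of $\operatorname{ad} X_2$ (identified along the family via $f_t$) is constant in $t$ and any degeneracy at $t=0$ can only merge eigenvalues. Consequently, if $\g_{a_1,\dots,a_{p-1}}$ contracts to $\g_{b_1,\dots,b_{p-1}}$, the two multisets of eigenvalues must agree; modulo the symmetries $a_k\mapsto -(1+a_k)$ (swap $X_{2k+1}\leftrightarrow X_{2k+2}$) and permutations of the index $k$, this forces the two tuples to coincide, so the algebras are isomorphic and no nontrivial contraction occurs inside $\mathcal{F}$.

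The principal difficulty lies in the contraction step: the naive diagonal rescaling described above does not by itself kill the off-diagonal terms $C^{2k+1}_{2,2l+1}$ with $l\neq k$, whose scaling factor is $t^0$. This is exactly the place where the Jacobi relations from ${\rm d}^2\omega_1=0$ must be exploited, together with a preliminary linear change of basis that preserves the canonical form of ${\rm d}\omega_1$ and simultaneously diagonalizes the semisimple part of $\operatorname{ad} X_2$ on $\operatorname{span}(X_3,\dots,X_{2p})$. After this normalization the off-diagonal terms have already disappeared, and the diagonal contraction then produces the model $\g_{a_1,\dots,a_{p-1}}$ unambiguously.
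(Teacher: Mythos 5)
The paper itself gives no argument for this theorem (its ``proof'' is the citation of \cite{GozeCras2} and \cite{GR-DGA}), so there is nothing to compare step by step; judged on its own, your outline follows the strategy the paper hints at (normal form for ${\rm d}\omega_1$, diagonal degeneration, spectrum of the principal element as separating invariant), but it has two concrete gaps. The first is in the contraction step. You correctly notice that the coefficients $C^{j}_{2,l}$ ($j,l\geq 3$) have weight $t^{0}$ under your rescaling, but your fix --- diagonalize the semisimple part of $\operatorname{ad}X_2$ and claim the off-diagonal terms ``have already disappeared'' --- fails whenever $\operatorname{ad}X_2$ is not semisimple on $\operatorname{span}(X_3,\dots,X_{2p})$. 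This genuinely occurs: for $p=2$ take $[X_1,X_2]=X_1$, $[X_3,X_4]=X_1$, $[X_2,X_3]=-\tfrac12X_3$, $[X_2,X_4]=-\tfrac12X_4+cX_3$; the Jordan entry $c$ is a weight-$t^{0}$ coefficient and survives your limit. It can be removed by a second, asymmetric degeneration ($X_1\mapsto s^{-1}X_1$, $X_3\mapsto s^{-1}X_3$), and one must check that this does not destroy the term $\omega_3\wedge\omega_4$ in ${\rm d}\omega_1$ (it does not, precisely because $X_1$ is rescaled simultaneously) and then invoke transitivity of degenerations. None of this is in your write-up, and it is exactly the hard part of the first assertion.

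The second gap is in the separation argument. Under $\mu_t=f_t^{-1}\circ\mu_0(f_t\times f_t)$ the operator $\operatorname{ad}_{\mu_t}X_2$ is conjugate to $\operatorname{ad}_{\mu_0}(f_tX_2)$, and $f_tX_2$ moves inside $\g$; so the characteristic polynomial of $\operatorname{ad}X_2$ is \emph{not} constant in $t$, contrary to what you assert. The statement that is true, and that you need, is that the characteristic polynomial of any $\operatorname{ad}_{\mu'}Y$ in a degeneration $\mu'$ lies in the closure of the set $\{\chi_{\operatorname{ad}_{\mu_0}Z}\colon Z\in\g\}$. For $\g_{a_1,\dots,a_{p-1}}$ one computes that $\operatorname{ad}Z$ has spectrum $\{0\}\cup z_2\cdot\{1,-a_1,1+a_1,\dots,-a_{p-1},1+a_{p-1}\}$ where $z_2=\omega_2(Z)$, a closed one-parameter family of polynomials; comparing traces forces $z_2=1$, and then equality of the multisets yields $\{b_k\}=\{a_k\}$ up to the symmetry $a_k\mapsto-(1+a_k)$ and permutation. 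With that normalization your conclusion is correct, but as written the invariance claim on which the whole second half rests is false.
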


\begin{proof} See \cite{GozeCras2} and \cite{GR-DGA}.\end{proof}

\begin{Remark}
The notion of principal element of a Frobenius Lie algebra is defined in~\cite{Ooms}. In the basis $\{X_1,\dots,X_{2p}\}$ for which ${\rm d}\omega_{1}=\omega_{1}\wedge\omega_{2}+\sum\limits_{k=1}^{p-1}\omega_{2k+1}\wedge\omega_{2k+2}$, the principal element is~$X_2$.
\end{Remark}

\begin{Proposition}
The parameters $\{a_1,\dots,a_{p-1}\}$ which are the invariants of Frobenius Lie algebras up to contraction are the eigenvalues of the principal element of $\g_{a_1,\dots,a_{p-1}}$.
\end{Proposition}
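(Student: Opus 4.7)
The plan is to translate the Maurer--Cartan equations of $\g_{a_1,\dots,a_{p-1}}$ from Theorem~\ref{frob} into explicit brackets, read off the spectrum of $\operatorname{ad} X_{2}$, and match it to the parameters.

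First, applying ${\rm d}\alpha(X,Y)=-\alpha([X,Y])$ to the four defining families of Maurer--Cartan equations gives in particular
\begin{gather*}
[X_{2},X_{1}]=X_{1},\qquad [X_{2},X_{2k+1}]=-a_{k}X_{2k+1},\qquad [X_{2},X_{2k+2}]=(1+a_{k})X_{2k+2}
\end{gather*}
for $1\leq k\leq p-1$, so that $\operatorname{ad} X_{2}$ is diagonal in the basis $\{X_{1},\dots,X_{2p}\}$ with spectrum $\{0,1\}\cup\{-a_{k},\,1+a_{k}\}_{k=1}^{p-1}$. Each parameter $a_{k}$ is recoverable from the eigenpair $\{-a_{k},1+a_{k}\}$ on the plane $\C\{X_{2k+1},X_{2k+2}\}$, up to the involution $a_{k}\leftrightarrow-1-a_{k}$ that swaps those two eigenvectors.

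Second, I would verify that $X_{2}$ is indeed the principal element, as announced in the preceding remark. The principal element $h$ attached to a Frobenius form~$F$ is characterized by $F([h,X])=F(X)$ for every $X\in\g$ (equivalently $B(h,\cdot)=F$ where $B(X,Y)=F([X,Y])$ is the associated non-degenerate skew form). Taking $F=\omega_{1}$ and evaluating on each basis vector using the brackets above yields $\omega_{1}([X_{2},X_{j}])=\delta_{j,1}=\omega_{1}(X_{j})$, confirming that $X_{2}$ is the principal element.

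Third, I would conclude. The multiset of eigenvalues of $\operatorname{ad} X_{2}$ depends only on the isomorphism class of $\g_{a_1,\dots,a_{p-1}}$, hence so does the unordered tuple $\{a_{k}\}$ read off from the non-trivial eigenpairs. Combined with Theorem~\ref{frob}, which asserts that distinct members of~$\mathcal{F}$ are not related by contraction, this shows that the $a_{k}$ are genuine contraction invariants of the Frobenius Lie algebra and coincide with the non-trivial eigenvalues of its principal element. The only real subtlety is bookkeeping: reconciling the sign in ${\rm d}\alpha(X,Y)=-\alpha([X,Y])$ and the precise normalization of the principal element used in~\cite{Ooms}, so that the correspondence $a_{k}\leftrightarrow$ eigenvalue is pinned down exactly rather than up to a shift, a sign, or the obvious $X_{2k+1}\leftrightarrow X_{2k+2}$ symmetry.
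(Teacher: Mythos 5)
Your computation is correct and is exactly the argument the paper leaves implicit: the Proposition is stated without proof, resting on the preceding Remark that $X_2$ is the principal element, and your derivation of the brackets from ${\rm d}\alpha(X,Y)=-\alpha([X,Y])$, the verification that $\omega_1([X_2,X_j])=\omega_1(X_j)$, and the reading-off of the spectrum of $\operatorname{ad}X_2$ supply precisely the missing details. Your closing caveat is warranted: with $\operatorname{ad}X_2=[X_2,\cdot]$ the nontrivial eigenvalues are $-a_k$ and $1+a_k$ rather than $a_k$ themselves (one recovers $a_k$ and $-(1+a_k)$ by reading the Maurer--Cartan coefficients, i.e., using the transpose convention), so the Proposition holds only up to this normalization and the involution $a_k\leftrightarrow -1-a_k$ --- exactly the bookkeeping you flag.
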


\subsubsection{Classification of real Frobenius Lie algebras up to contraction}

We have seen that, in the complex case, the classification up to contraction of $2p$-dimensional Lie algebras is in correspondence with the reduced matrix of the principal element. We deduce directly the classification in the real case:

\begin{Theorem}\label{frobreal}Let $\g_{a_1,\dots,a_{s},b_1,\dots,b_{2p-s-1}}$, $ a_i,b_j \in \R$ be the $2p$-dimensional Lie algebras given by
\begin{gather*}
[X_1,X_2]=[X_{2k-1},X_{2k}]=X_1, \qquad k=2,\dots,p, \\
[X_2,X_{4k-1}] =a_kX_{4k-1}+b_kX_{4k+1}, \qquad [ X_2,X_{4k}] =(-1-a_k)X_{4k}-b_kX_{4k+2}, \qquad k \leq s,\\
[ X_2,X_{4k+1}] =-b_kX_{4k-1}+a_kX_{4k+1}, \\ [ X_2,X_{4k+2}] =b_kX_{4k}+(-1-a_k)b_kX_{4k+2}, \qquad k \leq s,\\
[X_2,X_{4s+2k-1}] =-\tfrac{1}{2}X_{4s+2k-1}+b_kX_{4k+2k}, \qquad 2 \leq k \leq p-2s,\\
[ X_2,X_{4s+2k}] =-b_kX_{4s+2k-1}-\tfrac{1}{2}X_{4s+2k}, \qquad 2 \leq k \leq p-2s.
\end{gather*}
Then any $2p$-dimensional Frobenius real Lie algebra is contracted in an element of the family $\mathcal{F}=\{\g_{a_1,\dots,a_{s},b_1,\dots,b_{2p-s-1}}\}_{a_i,b_j \in \R}$. Moreover, any element of $\mathcal{F}$ cannot be contracted in other different element of this family.
\end{Theorem}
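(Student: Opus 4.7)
The plan is to mimic the complex argument of Theorem~\ref{frob}, replacing complex Jordan reduction of the principal element by its real Jordan reduction. As in the complex case, start from a basis $\{X_1,\dots,X_{2p}\}$ whose dual $\{\omega_1,\dots,\omega_{2p}\}$ is adapted to the Frobenius structure, i.e.,
\begin{gather*}
{\rm d}\omega_1=\omega_1\wedge\omega_2+\sum_{k=1}^{p-1}\omega_{2k+1}\wedge\omega_{2k+2},
\end{gather*}
and let $X_2$ be the principal element. The key observation is that the endomorphism $\operatorname{ad} X_2$ preserves the hyperplane $V=\R\{X_3,\dots,X_{2p}\}$ and, because the $2$-form ${\rm d}\omega_1$ restricted to $V$ is the nondegenerate symplectic form $\sum \omega_{2k+1}\wedge\omega_{2k+2}$, the Jacobi identity ${\rm d}^2\omega_1=0$ forces $\operatorname{ad} X_2|_V$ to lie in the conformal symplectic algebra with multiplier $-1$: that is, its eigenvalues come in pairs summing to $-1$, and every pair of complex conjugate eigenvalues $\alpha\pm{\rm i}\beta$ is paired with $-1-\alpha\pm{\rm i}\beta$ in the symplectic pairing.

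First, I would carry out a one-parameter contraction $f_t={\rm diag}(1,1,t,t,\dots,t)$ (possibly composed with conjugation by an element of the symplectic group on~$V$) which kills all off-diagonal entries of $\operatorname{ad} X_2|_V$ except those arising from the real Jordan form, and simultaneously annihilates all brackets $[X_i,X_j]$ for $i,j\geq 3$ except those already present in the statement. This is the standard trick used in the complex proof in~\cite{GozeCras2,GR-DGA}: the rescaling $f_t$ converts quadratic terms between the $X_i$ ($i\geq 3$) into $O(t)$ and preserves the brackets $[X_1,X_{2k-1}]$, $[X_2,X_j]$ and the Frobenius relations $[X_{2k-1},X_{2k}]=X_1$. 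Once the contraction is performed, only the $\operatorname{ad} X_2$ action on $V$ survives as nontrivial additional data.

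Second, I would decompose $\operatorname{ad} X_2|_V$ into real Jordan blocks. By the pairing above, blocks come in two species: blocks with a pair of complex conjugate eigenvalues $(a_k\pm{\rm i}b_k,-1-a_k\pm{\rm i}b_k)$, each contributing a $4$-dimensional symplectic subblock spanned by $\{X_{4k-1},X_{4k},X_{4k+1},X_{4k+2}\}$ with exactly the brackets in the statement; and blocks with a real eigenvalue pair $(\lambda,-1-\lambda)$, contributing a $2$-dimensional subblock. Repeated real eigenvalues equal to $-1/2$ may occur with nontrivial Jordan structure, and these produce the $2\times 2$ nilpotent blocks in the second line of the formulas for $[X_2,X_{4s+2k-1}]$ and $[X_2,X_{4s+2k}]$. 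After this reduction, $\g$ sits in $\mathcal{F}$ with parameters determined by the real spectrum of the principal element.

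Finally, for the irreducibility part, I would argue that inside $\mathcal{F}$ no nontrivial contraction between distinct members is possible. As in the complex case, this is because the spectrum of the principal element is a continuous function on the variety $\mathfrak{L}^{2p}$ along the ${\rm GL}(2p,\R)$-orbit: if $\g_{a,b}$ contracts to $\g_{a',b'}$, then by upper semicontinuity of the set of eigenvalues and the fact that the principal element is uniquely defined by the Frobenius structure up to conjugation, the multisets $\{a_k,-1-a_k,\pm{\rm i}b_j,\dots\}$ must coincide. The main obstacle I anticipate is the bookkeeping for the nilpotent real Jordan blocks at the eigenvalue $-1/2$: one has to verify that the symplectic pairing is compatible with a nontrivial Jordan structure and produces exactly the coefficient $-\tfrac12$ on the diagonal together with the off-diagonal $b_k$ parameters, and that no further contraction can collapse a nilpotent block into a semisimple one without altering the spectrum. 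Once these two points are settled, the real classification follows, completing the argument by invoking the proof of Theorem~\ref{frob} on the complexified algebra and descending.
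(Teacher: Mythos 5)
Your overall route is the same as the paper's: the paper gives no independent proof of Theorem~\ref{frobreal}, but simply ``deduces directly'' the real classification from the complex one (Theorem~\ref{frob}, proved in~\cite{GozeCras2, GR-DGA}) by replacing the reduced (Jordan) matrix of the principal element by its real reduced form, and your proposal is an expanded version of exactly that deduction, with the eigenvalues of the principal element as the contraction invariants. Two points in your sketch are off, though neither is fatal to the strategy. First, the $2\times 2$ blocks in the second family are not nilpotent Jordan blocks for a repeated real eigenvalue $-\tfrac12$: they are the real rotation--scaling blocks for the complex conjugate pair $-\tfrac12\pm{\rm i}b_k$, which is precisely the degenerate case where an eigenvalue $\lambda$ coincides with the conjugate of its symplectic partner $-1-\bar\lambda$ (forcing $\operatorname{Re}\lambda=-\tfrac12$), so that the generic $4$-dimensional real block collapses to a $2$-dimensional one; a genuinely nonsemisimple principal element is in any case semisimplified by the contraction, since the models have semisimple $\operatorname{ad}X_2$. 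Second, the rescaling $f_t=\operatorname{diag}(1,1,t,\dots,t)$ does not do what you claim: it sends $[X_{2k-1},X_{2k}]=X_1$ to $t^2X_1$ and hence destroys the Frobenius relations in the limit. One must use a weighted contraction (e.g., $X_1\mapsto t^2X_1$, $X_2$ fixed, $X_j\mapsto tX_j$ for $j\geq 3$, further graded along the Jordan filtration of $\operatorname{ad}X_2$) so that the terms $\sum\omega_{2k+1}\wedge\omega_{2k+2}$ in ${\rm d}\omega_1$ and the semisimple part of the principal element survive while all other structure constants tend to~$0$. With those corrections your argument reproduces the intended proof.
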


\begin{Corollary}Any $2p$-dimensional Lie algebras described in~\eqref{frob} in the complex case and in~\eqref{frobreal} in the real case have an open coadjoint orbit of dimension~$2p$. Moreover any $2p$-dimensional Lie algebras with an open coadjoint orbit of dimension~$2p$ is a deformation of a~Lie algebra of these families.
\end{Corollary}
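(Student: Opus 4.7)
The plan is to split the corollary into its two assertions and handle each in turn, using the characterization of orbit dimension via Cartan class from the earlier corollary.

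First I would observe that the phrase ``open coadjoint orbit of dimension $2p$'' in a $2p$-dimensional Lie algebra $\g$ is just a restatement of being Frobenius. Indeed, from the earlier corollary we have $\dim \mathcal{O}(\alpha) = 2p$ if and only if $\operatorname{cl}(\alpha) \in \{2p, 2p+1\}$; but in dimension $2p$ the value $2p+1$ cannot occur, so $\alpha$ must be a Frobenius form, and conversely any Frobenius form yields an orbit of dimension $2p = \dim \g^*$, which is automatically open.

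Next I would verify the first assertion directly from the Maurer--Cartan equations of Theorem~\ref{frob} (and identically for Theorem~\ref{frobreal}). The structure equation
\begin{gather*}
{\rm d}\omega_{1}=\omega_{1}\wedge\omega_{2}+\sum_{k=1}^{p-1}\omega_{2k+1}\wedge\omega_{2k+2}
\end{gather*}
is a sum of $p$ decomposable $2$-forms whose supports $\{\omega_{2k-1},\omega_{2k}\}$ are pairwise disjoint. Expanding $({\rm d}\omega_1)^p$ by the multinomial formula, every term containing a repeated factor vanishes, so only the unique term pairing all $p$ summands survives and one obtains
\begin{gather*}
({\rm d}\omega_1)^p = p!\,\omega_1 \wedge \omega_2 \wedge \cdots \wedge \omega_{2p},
\end{gather*}
which is the top form on $\g^*$ and in particular is nonzero. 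Hence $\operatorname{cl}(\omega_1)=2p$, $\omega_1$ is a Frobenius form, and $\mathcal{O}(\omega_1)$ is the required open orbit.

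For the converse, I would invoke Theorems~\ref{frob} and~\ref{frobreal} directly. Any $2p$-dimensional Lie algebra admitting an open orbit of dimension $2p$ is Frobenius by the reformulation above, and these theorems assert that every such algebra can be contracted onto some $\g_0 \in \mathcal{F}$. By the very definition of contraction recalled just before Theorem~\ref{frob}, there exists a family of brackets $\mu_t = f_t^{-1} \circ \mu \circ (f_t \times f_t)$, all isomorphic to the bracket $\mu$ of $\g$ for $t \neq 0$, whose limit at $t=0$ is the bracket $\mu_0$ of $\g_0$. Read backwards along the same family, this exhibits $\g$ as a (formal/one-parameter) deformation of $\g_0$, which is what was to be shown.

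The only point requiring some care is the multinomial expansion of $({\rm d}\omega_1)^p$ and the bookkeeping showing that the cross-terms vanish; this is the one genuine computation, but it is straightforward because the supports of the summands are disjoint. Everything else is a matter of repackaging: the reformulation via the Cartan-class/orbit-dimension dictionary, and the fact that contraction is the inverse notion to deformation. No new input beyond Theorems~\ref{frob} and~\ref{frobreal} is needed.
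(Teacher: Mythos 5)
Your proposal is correct and follows exactly the route the paper intends (the corollary is stated without proof as an immediate consequence of Theorems~\ref{frob} and~\ref{frobreal}): the identification of ``open $2p$-dimensional orbit'' with the Frobenius condition via the Cartan-class dictionary, the computation $({\rm d}\omega_1)^p=p!\,\omega_1\wedge\cdots\wedge\omega_{2p}\neq 0$ for the model algebras, and the contraction/deformation duality that the paper itself invokes in the contact case. Nothing is missing.
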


\subsection*{Acknowledgements}

The authors would like to thank the referees for their kind advices and useful comments to improve the paper.

\pdfbookmark[1]{References}{ref}
\LastPageEnding

\end{document}